\newcommand{\rom}[1]{\uppercase\expandafter{\romannumeral #1\relax}}
\newcommand{\blue}{\color{black}}
\newcommand{\black}{\color{black}}
\newcommand{\tow}{\rightharpoonup}
  \theoremstyle{definition}
  \newtheorem{theorem}{Theorem}[section]
  \newtheorem{corollary}[theorem]{Corollary}
  \newtheorem{lemma}[theorem]{Lemma}
  \newtheorem{definition}[theorem]{Definition}
  \newtheorem{remark}[theorem]{Remark}
  \newtheorem{example}[theorem]{Example}
  \newtheorem{maintheorem}{Main Theorem}
\newtheorem{thmx}{Assumption}
\newtheorem{assumption}[thmx]{Assumption}
  \newtheorem*{assumption*}{Assumption}
  \newcommand{\fu}{u}
  \newcommand{\fp}{q}
  \numberwithin{equation}{section}
\providecommand{\Div}{\operatorname{div}}          
\providecommand{\curl}{\operatorname{{ curl}}}  
\newcommand{\VR}{{\mathbf{R}}}
\newcommand{\Dsf}{\mathsf{D}}
\providecommand{\Ca}{{\cal A}}
\newcommand{\eps}{{\varepsilon}}
\newcommand{\overbar}[1]{\mkern 1.5mu\overline{\mkern-1.5mu#1\mkern-1.5mu}\mkern 1.5mu}
\newcommand{\ben}{\begin{equation}}
\newcommand{\een}{\end{equation}}
\newcommand{\benn}{\begin{equation*}}
\newcommand{\eenn}{\end{equation*}}
\newcommand{\nuh}{\hat{\nu}}
\begin{document}
\title{Asymptotic analysis and topological derivative for 3D quasi-linear magnetostatics}

\author{Peter Gangl\footnote{TU Graz, Steyrergasse 30/III, 8010 Graz, Austria, gangl(at)math.tugraz.at } $\;$ and 
        Kevin Sturm\footnote{TU Wien, Wiedner Hauptstr. 8-10,
      1040 Vienna, Austria, E-Mail: kevin.sturm(at)tuwien.ac.at}}

\date{\today}

\maketitle

\begin{abstract}
 
In this paper we study the asymptotic behaviour of the quasilinear $\curl$-$\curl$ equation of 3D magnetostatics with respect to a singular perturbation of the differential operator and prove the existence of the topological derivative using a Lagrangian approach. We follow the strategy proposed in \cite{a_GanglSturm_2019a} where a systematic and concise way for the derivation of topological derivatives for quasi-linear elliptic problems in $H^1$ is introduced. In order to prove the asymptotics for the state equation we make use of an appropriate Helmholtz decomposition.

The evaluation of the topological derivative at any spatial point requires the solution of a nonlinear transmission problem. We discuss an efficient way for the numerical evaluation of the topological derivative in the whole design domain using precomputation in an offline stage. This allows us to use the topological derivative for the design optimization of an electrical machine.

\end{abstract}

%

\section{Introduction}
The main result of this paper is the computation of the topological 
derivative for the tracking-type cost function
\begin{align} \label{defJ}
	J(\Omega) &= \int_{\Omega_g} \blue | \curl u - B_d|^2 \black \; dx
\end{align}
subject to the constraint that $u\in  V(\Dsf):= \{u\in H_0(\Dsf,\curl):\; \Div(u)=0 \text{ in } \Dsf\}$ solves 
\begin{align}\label{E:weakformulation}
    \int_{\Dsf}  \Ca_\Omega(x,\curl u) \cdot \curl v \; dx = \langle F, v \rangle  \quad \mbox{for all } v \in V(\Dsf),
\end{align}
where $\Ca_\Omega:\Dsf \times \VR^3 \rightarrow \VR^3$ is a piecewise nonlinear function defined by
\ben\label{eq_AOmega}
\Ca_\Omega(x,y) := \left\{
  \begin{array}{cl}
    a_1(y) & \text{ for } x\in \Omega,\\
    a_2(y) & \text{ for } x\in \Dsf\setminus \Omega,
\end{array}\right.
\een
with two continuously differentiable functions $a_1, a_2 :\VR^3 \to \VR^3$ satisfying the following assumption:
\begin{assumption}\label{A:nonlinearity}
    There are constants $c_1, c_2, c_3>0$ such that the functions $a_i:\VR^3\to \VR^3$, $i=1,2$ are differentiable and satisfy:
    \begin{itemize}
        \item[(i)] 
            $(a_i(x)-a_i(y))\cdot (x-y) \ge  c_1 \blue |x-y|^2,\black   \quad \text{ for all } x,y\in \VR^3$,
        \item[(ii)] 
           $ \blue |a_i(x)-a_i(y)|\le  c_2|x-y| \black  \quad \text{ for all } x,y\in \VR^3$,
        \item[(iii)] 
            $  \blue |\partial a_i(x)-\partial a _i(y) |\le c_3 |x-y| \black  \quad \text{ for all } x,y\in \VR^3.$
    \end{itemize}
\end{assumption}
The right hand side $F$ is a linear and continuous functional on $V(\Dsf)$ defined by
\begin{align*}
    \langle F, v \rangle := \int_{\Omega_1} J \cdot v \; dx  + \int_{\Omega_2} M \cdot \curl v \; dx \quad  \mbox{for } v \in \blue V(\Dsf) \black,
\end{align*}
where $\Omega_1,\Omega_2\subset \Dsf$ are open sets and $J,M\in L_2(\Dsf)^3$. Properties (i) and (ii) of Assumption \ref{A:nonlinearity} imply that the operator $A_\Omega:V(\Dsf) \to \mathcal L(V(\Dsf),\VR)$ defined by $\langle A_\Omega \varphi,\psi\rangle := \int_\Dsf\Ca_\Omega(x, \curl\varphi)\cdot \curl \psi \;dx$ is Lipschitz continuous and strongly monotone for all measurable $\Omega  \subset \Dsf$. Hence the state equation \eqref{E:weakformulation} admits a unique solution by the theorem of Zarantonello; see \cite[p.504, Thm. 25.B]{b_ZE_1990a}.

Among other applications the set of equations \eqref{E:weakformulation} models a 3D electrical machine and captures nonlinear physical effects. A realistic physical model for which the above assumption are satisfied in practice will be presented in the last section.

The topological derivative has already been computed for many linear PDEs and also the literature on its numerical implementation is rich. We refer to the 
monograph \cite{NovotnySokolowski2013} for many examples and also references therein. For nonlinear PDEs the literature is far less complete and only few articles dealing with nonlinear constraints 
exist. Here we would like to mention \cite{a_AM_2006b, a_IGNAROSOSZ_2009a}, and more recently \cite{Sturm2019}, where semilinear problems were studied. 

Concerning quasi-linear problems, in which the topological perturbation enters in the main part 
of the non-linearity, even less work has been done. Here we mention \cite{a_AMBO_2017a} where the authors consider a regularised version of the $p$-Poisson equation and also \cite{AmstutzGangl2019} where the topological derivative for the quasi-linear equation of 2D magnetostatics was derived. More recently, 
in \cite{a_GanglSturm_2019a} the topological derivative for a class of quasi-linear equations under fairly general assumptions in an $H^1$ setting was presented. 

Shape optimisation for the linear Maxwell's equation has been studied in the context of time-harmonic electromagnetic waves \cite{a_HE_2012a}, magnetic impedance tomography \cite{a_HILAYO_2015a}, in electromagentic scattering \cite{a_COLE_2012a} and \cite{a_HILI_2018a}, where the last work takes a geometric viewpoint using differential forms. All these articles deal with linear problems and as far as the present authors knowledge no work has been done in the nonlinear case. In the context of optimal control in a quasi-linear $H(\curl)$ setting we mention \cite{a_YO_2013a}, where also numerical analysis is presented.

The topological sensitivity of 2D nonlinear magnetostatics, which is a simplification of Maxwell's equation in 3D, was treated in \cite{AmstutzGangl2019}. The topological sensitivity of three dimensional linear Maxwell's equations has been studied in \cite{a_MAPOSA_2005a} and is based on asymptotics derived in \cite{a_AMVOVO_2001a}. In the nonlinear context it seems no work has been done so far.

To our knowledge the asymptotics for \eqref{E:weakformulation} with respect to a singular 
perturbation of the operator is unknown. Accordingly also the topological 
derivative for the functional \eqref{defJ} and its numerical implementation are new. These are the main contribution of this paper.

The structure of the paper is as follows. In Section~\ref{sec:helmholtz} we recall 
a regular Helmholtz decomposition and prove a 
Helmholtz-type decomposition in $\VR^3$ which will be essential for the asymptotic analysis of the 
next section. In Section~\ref{sec:state_analysis} we present the asymptotic analysis 
of the state equation \eqref{E:weakformulation}. In Section~\ref{sec:topological_derivative} 
we compute the topological derivative for the cost function \eqref{defJ} using a 
Lagrangian method. In Section \ref{sec:numerics} we discuss the efficient numerical realisation 
of the obtained topological derivative. Finally, in the 
last section, we apply our results to a 3D electric machine and 
verify the pertinence of our approach in several numerical experiments.

\subsection*{Notation and definitions}

Standard $L^p$ spaces and Sobolev spaces on an open set $\Dsf\subset \VR^3$ are denoted $L_p(\Dsf)$ and $W^k_p(\Dsf)$, respectively, where $p\ge 1$ and $k\ge 1$. In case $p=2$ and $k\ge 1$  we set as usual $H^k(\Dsf):= W^k_2(\Dsf)$. Vector valued spaces are denoted $L_p(\Dsf)^3:=L_p(\Dsf,\VR^3)$ and $W^k_p(\Dsf)^3:=W^k_p(\Dsf,\VR^3)$.  Given a normed vector space $V$ we denote by $\mathcal L(V,\VR)$ the space of linear and continuous functions on $V$.  
We recall the definition of the space $H(\Dsf,\curl) = \{u\in L_2(\Dsf)^3:\curl \in L_2(\Dsf)^3\}$ and also 
\ben\label{E:curl_zero}
    H_0(\Dsf, \curl) = \left\{ u\in H(\Dsf,\curl): \int_{\Dsf}\curl u\cdot v = \int_{\Dsf}u\cdot \curl v  \quad \text{ for all } v\in H^1(\Dsf)^3  \right\}
\een
equipped with the norm $\|u\|^2_{H(\Dsf,\curl)}:= \|u\|_{L_2(\Dsf)^3}^2 + \|\curl u\|_{L_2(\Dsf)^3}^2$. It can be shown that $H_0(\Dsf,\curl) = \{ u \in L^2(\Dsf)^3 | \curl u \in L^2(\Dsf)^3 \mbox{ and } u \times n = 0 \mbox{ on } \partial \Dsf\}$. Moreover, we define the subspace
\ben
V(\Dsf) := \{u\in H_0(\Dsf,\curl):\; \Div(u)=0 \text{ on } \Dsf\}.
\een
Recall that the Friedrich's inequality $\|u\|_{L_2(\Dsf)^3} \le C\|\curl u\|_{L_2(\Dsf)^3}$ holds for all $u\in V(\Dsf)$ provided $\Dsf$ is a simply connected bounded Lipschitz domain; see \cite[Corol. 3.2]{p_SC_2018a} or \cite[Thm. 5.1]{a_BAPASC_2016a}.

We let $BL(\VR^3) :=  \{u\in H^1_{\text{loc}}(\VR^3):\; \nabla u \in L_2(\VR^3)^3\}$ 
and define the \emph{Beppo-Levi space} or \emph{homogeneous Sobolev space} as the quotient space $\dot{BL}(\VR^3) := BL(\VR^3)/\VR$, where $/\VR$ means that we quotient out the constant functions. We denote by $[u]$ the equivalence classes of $\dot{BL}(\VR^3)$. Equipped with the norm
      \ben
      \|[u]\|_{\dot{BL}(\VR^3)} := \|\nabla u\|_{L_2(\VR^3)^3}, \quad u\in [u],
      \een
      the Beppo-Levi space is a Hilbert space (see \cite{a_DELI_1955a,a_ORSU_2012a}) and $C^\infty_c(\VR^3)/\VR$ is dense in $\dot{BL}(\VR^3)$. The vector valued Beppo-Levi space $\dot{BL}(\VR^3,\VR^3)$ will be denoted by $\dot{BL}(\VR^3)^3$ and equipped with the standard norm. Whenever no confusion is possible we will not distinguish between an equivalence class $[u]$ and a representative $u$ and use the same notation. This will be clear from the context. 

      In the whole paper we equip $\VR^d$ with the Euclidean norm \blue$| \cdot |$\black and use the same notation for the corresponding matrix (operator) norm. We denote by $B_\delta(x)$ the Euclidean ball centred at $x$ with radius $\delta >0$.

\begin{remark}
    As remarked in \cite[Rem. 2.2]{a_GanglSturm_2019a}, it follows from Assumption \ref{A:nonlinearity} that the non-linearity $a_i$ satisfies:
    \begin{align} 
        \blue
        | a_i(x) | \black& \leq \blue |a_i(0)| + c_2 |x| \black, \label{rem_ai} \\
        \blue| \partial a_i(x) | \black& \leq \blue |\partial a_i(0)| + c_3 |x|\black, \label{rem_aiii} \\
        \blue|\partial a_i(x)v| \black &\le  c_2 \blue|v|\black,
    \end{align}
    for $i=1,2$ and for all $x,v \in \VR^3$.
\end{remark}

\begin{figure}
    \centering \includegraphics[width=0.5\textwidth]{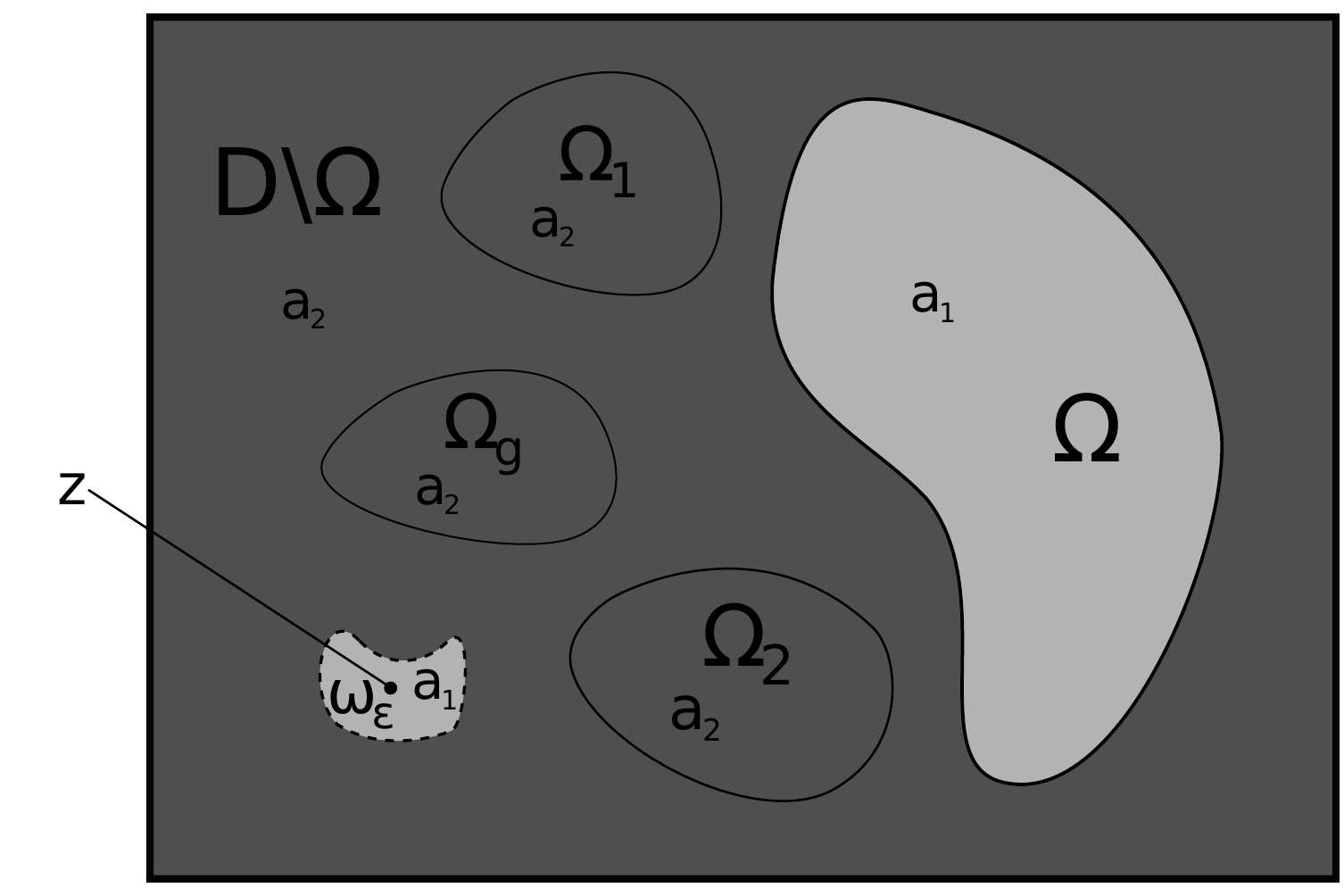}
    \caption{Setting for topological derivative: Inclusion $\omega_\eps$ of radius $\eps>0$ containing material $a_1$ around point $z \in \Dsf \setminus \overbar \Omega$ (where material $a_2$ is present).}

    \label{fig_setting}
\end{figure}

\section{Helmholtz-type decompositions in $\text{BL}(\VR^3)^3$} \label{sec:helmholtz}
In this section we develop the function space setting for the 
exterior equation that will appear in the asymptotic \blue expansion \black 
of the state equation (see Section~\ref{sec:state_analysis}). In particular we
will study a subspace of the Beppo-Levi space $\dot{BL}(\VR^3)^3$ and derive a
Helmholtz-type decomposition, which will be essential later on.
We recall the following regular Helmholtz decomposition of functions in $H_0(\Dsf,\curl)$; see, e.g., \cite[Lemma 3.4]{GiraultRaviart1986}, \cite[Thm. 29]{SchoeberlSkript} and also \cite{p_SC_2018a}. Throughout this section we assume that $\Dsf\subset \VR^3$ is a simply connected bounded Lipschitz domain. 
\begin{lemma}[Regular decomposition of $H_0(\Dsf,\curl)$] \label{L:regDecomp}
For every $u \in H_0(\Dsf,\curl)$ there exist $\varphi \in H_0^1(\Dsf)$, $u^* \in H_0^1(\Dsf)^3$ such that
	\begin{align*}
		u  = \nabla \varphi + u^*.
	\end{align*}
	Moreover, the following estimates hold:
	\begin{align*}
        \|\varphi\|_{H^1(\Dsf)} \leq C \|u\|_{H(\Dsf,\curl)} \quad \mbox{ and } \quad  \|u^*\|_{H^1(\Dsf)^3} \leq C \|\curl u\|_{L^2(\Dsf)^3}.
	\end{align*}
\end{lemma}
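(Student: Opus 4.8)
The plan is to reduce the statement to the construction of the regular part $u^{*}$ and to recover the scalar potential $\varphi$ for free afterwards. \emph{Step 1 (reduction).} Suppose I can produce $u^{*}\in H_0^1(\Dsf)^3$ with $\curl u^{*}=\curl u$ and $\|u^{*}\|_{H^1(\Dsf)^3}\le C\|\curl u\|_{L^2(\Dsf)^3}$. Then $w:=u-u^{*}$ lies in $H_0(\Dsf,\curl)$ (both summands do, since $H_0^1(\Dsf)^3\subset H_0(\Dsf,\curl)$) and satisfies $\curl w=0$. On the simply connected domain $\Dsf$ a curl-free field is a gradient, $w=\nabla\varphi$ with $\varphi\in H^1(\Dsf)$; the condition $w\times n=0$ forces $\varphi$ to be locally constant on $\partial\Dsf$, and since $\partial\Dsf$ is connected I may normalise $\varphi$ so that $\varphi\in H_0^1(\Dsf)$. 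Finally $\|\nabla\varphi\|_{L^2}=\|u-u^{*}\|_{L^2}\le\|u\|_{L^2}+C\|\curl u\|_{L^2}\le C\|u\|_{H(\Dsf,\curl)}$, and the Poincaré inequality on $H_0^1$ upgrades this to $\|\varphi\|_{H^1(\Dsf)}\le C\|u\|_{H(\Dsf,\curl)}$, which is the second estimate.

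\emph{Step 2 (free-space regular potential).} To build $u^{*}$ I first construct a regular potential on all of $\VR^3$, ignoring the boundary condition. Extending $u$ by zero to $\tilde u$, the characterisation \eqref{E:curl_zero} shows $\tilde u\in H(\VR^3,\curl)$ with $\curl\tilde u=\widetilde{\curl u}$, the zero extension of $\curl u$; testing the same identity against gradients gives $\Div(\curl\tilde u)=0$. Setting $\Phi:=\mathcal N\ast\curl\tilde u$ (componentwise Newtonian potential) and $z:=\curl\Phi$, one checks $\Div\Phi=0$, hence $\curl z=-\Delta\Phi=\curl\tilde u$ and $\Div z=0$; the Calderón–Zygmund bound $\|\nabla^2\Phi\|_{L^2(\VR^3)}\le C\|\curl u\|_{L^2(\Dsf)}$ together with the Sobolev embedding $\|\nabla\Phi\|_{L^6}\le C\|\nabla^2\Phi\|_{L^2}$ yields $z\in H^1$ locally with $\|z\|_{H^1(\Dsf)^3}\le C\|\curl u\|_{L^2(\Dsf)^3}$. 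Since $\curl(\tilde u-z)=0$ on $\VR^3$, I get $\tilde u-z=\nabla p$ for some $p\in H^1_{\mathrm{loc}}(\VR^3)$, i.e. $u=z+\nabla p$ in $\Dsf$ — a regular decomposition \emph{without} the homogeneous boundary condition.

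\textbf{Main obstacle (Step 3, boundary correction).} The difficulty is promoting $z$ to an $H_0^1$ field while preserving $\curl u^{*}=\curl u$ and the curl-only estimate; subtracting a gradient can only adjust the \emph{tangential} trace, so a priori one cannot kill the full trace of $z$. The key observation is that on the exterior $\VR^3\setminus\overline\Dsf$ one has $\tilde u=0$, whence $z=-\nabla p$ there; since $\nabla p=-z\in H^1$ on a Lipschitz shell outside $\Dsf$, the function $p$ is $H^2$ near $\partial\Dsf$, and the two-sided $H^1$-trace of $z$ equals the gradient-trace $-\nabla p|_{\partial\Dsf}$. In particular the tangential part of $z|_{\partial\Dsf}$ is a \emph{surface gradient}, which is exactly the compatibility condition needed to realise $z|_{\partial\Dsf}$ as $-\nabla\hat p|_{\partial\Dsf}$. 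Using surjectivity of the Cauchy-data map $H^2(\Dsf)\to H^{3/2}(\partial\Dsf)\times H^{1/2}(\partial\Dsf)$ I would lift the exterior Cauchy data of $p$ to an interior $\hat p\in H^2(\Dsf)$ with $\|\hat p\|_{H^2}\le C\|z\|_{H^1}\le C\|\curl u\|_{L^2}$, and set $u^{*}:=z+\nabla\hat p$. Then $\curl u^{*}=\curl z=\curl u$, the trace of $u^{*}$ vanishes by construction so $u^{*}\in H_0^1(\Dsf)^3$, and the estimate is inherited; feeding this $u^{*}$ into Step 1 (with $\varphi=p-\hat p$) completes the argument.

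I expect the boundary correction in Step 3 to be the only genuinely delicate point: on a merely Lipschitz domain the $H^2$-lifting of Cauchy data and the identification of the relevant tangential/normal traces require care, and it is precisely the gradient structure of the exterior remainder that makes the construction succeed. The remaining ingredients — the Calderón–Zygmund and Sobolev estimates, the Poincaré inequality, and the zero-extension identity — are routine.
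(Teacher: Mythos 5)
The paper never proves this lemma --- it is quoted from the literature (Girault--Raviart, Sch\"oberl's lecture notes, Pauly--Schomburg) --- so there is no internal proof to compare against. Your argument is essentially the standard proof underlying those references: extend $u$ by zero, build a divergence-free vector potential $z=\curl\bigl(\mathcal N\ast\widetilde{\curl u}\bigr)$ with $\curl z=\curl\tilde u$ and an $H^1$-bound via Calder\'on--Zygmund and Sobolev embedding, write $\tilde u-z=\nabla p$ on $\VR^3$, and correct the boundary values of $z$ using the fact that $\nabla p=-z\in H^1$ on the exterior. Steps 1 and 2 are sound as written.

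Two points in Step 3 and Step 1 need adjustment. First, you should not invoke surjectivity of the Cauchy-data map $H^2(\Dsf)\to H^{3/2}(\partial\Dsf)\times H^{1/2}(\partial\Dsf)$: on a merely Lipschitz boundary this map is \emph{not} surjective onto that product (trace and tangential gradient are coupled, and $H^{3/2}(\partial\Dsf)$ is itself problematic), so as stated the lifting step would fail. The clean way to use your key observation --- that the data to be matched is the Cauchy data of the exterior $H^2$ function $p$ --- is to apply the Stein/Calder\'on extension operator $E:H^2(B_R\setminus\overline{\Dsf})\to H^2(\VR^3)$ to $p$ (normalised to mean zero on the shell) and set $\hat p:=Ep|_{\Dsf}$; since $\nabla Ep\in H^1(\VR^3)^3$, its two one-sided traces on $\partial\Dsf$ coincide, so $\nabla\hat p|_{\partial\Dsf}=-z|_{\partial\Dsf}$ and $u^*:=z+\nabla\hat p$ has zero trace with all estimates inherited. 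Equivalently, $u^*=\tilde u-\nabla(p-Ep)$ vanishes identically outside $\overline{\Dsf}$ and lies in $H^1(\VR^3)^3$, whence $u^*|_{\Dsf}\in H^1_0(\Dsf)^3$; this bypasses trace-space surjectivity entirely. Second, your Step 1 uses that $\partial\Dsf$ is connected, and this is genuinely needed rather than cosmetic: for a spherical shell $\{1<|x|<2\}$, which is simply connected but has disconnected boundary, the field $\nabla\varphi$ with $\varphi$ harmonic, $\varphi=0$ on the inner and $\varphi=1$ on the outer sphere, belongs to $H_0(\Dsf,\curl)$, is curl-free, and admits no decomposition with $\|u^*\|_{H^1}\le C\|\curl u\|_{L^2}=0$. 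So you are silently adding a hypothesis that the paper's statement omits but that the lemma actually requires. With these two repairs the proof is complete.
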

The following Helmholtz decomposition is standard. 
\begin{lemma}\label{lem:helmholtz_VD}
    For every $u\in H^1_0(\Dsf)^3$ we find $\varphi \in H^1_0(\Dsf)$ and $\psi \in V(\Dsf)$, such that 
    \ben\label{E:helmholtz_VD}
 u = \nabla \varphi + \psi.  
    \een
\end{lemma}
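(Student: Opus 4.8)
The plan is to obtain $\varphi$ as the solution of a Poisson problem with right-hand side $\Div u$, and then to show that the remainder $\psi := u - \nabla\varphi$ automatically lies in $V(\Dsf)$. Concretely, first I would consider the variational problem: find $\varphi \in H^1_0(\Dsf)$ such that
\[
    \int_\Dsf \nabla\varphi \cdot \nabla w \; dx = \int_\Dsf u \cdot \nabla w \; dx \quad \text{for all } w \in H^1_0(\Dsf).
\]
Since $u \in H^1_0(\Dsf)^3 \subset L_2(\Dsf)^3$, the right-hand side is a bounded linear functional on $H^1_0(\Dsf)$, and the Dirichlet bilinear form on the left is coercive on $H^1_0(\Dsf)$ by the Poincar\'e–Friedrichs inequality. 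Hence the Lax–Milgram lemma provides a unique $\varphi \in H^1_0(\Dsf)$.

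Setting $\psi := u - \nabla\varphi$, the identity \eqref{E:helmholtz_VD} holds by construction, so it remains to verify $\psi \in V(\Dsf)$, i.e. that $\psi \in H_0(\Dsf,\curl)$ and $\Div\psi = 0$ in $\Dsf$. The divergence-free property is immediate from the variational equation: for every $w \in H^1_0(\Dsf)$,
\[
    \int_\Dsf \psi \cdot \nabla w \; dx = \int_\Dsf u \cdot \nabla w \; dx - \int_\Dsf \nabla\varphi \cdot \nabla w \; dx = 0,
\]
which is exactly the weak statement $\Div\psi = 0$ in $\Dsf$. For the membership in $H_0(\Dsf,\curl)$ I would treat each summand separately, using that this space is linear. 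On the one hand, $u \in H^1_0(\Dsf)^3$ has vanishing tangential trace and $\curl u \in L_2(\Dsf)^3$, so $u \in H_0(\Dsf,\curl)$. On the other hand, $\curl\nabla\varphi = 0 \in L_2(\Dsf)^3$, so $\nabla\varphi \in H(\Dsf,\curl)$, and to check the boundary condition encoded in \eqref{E:curl_zero} I would test against an arbitrary $v \in H^1(\Dsf)^3$: from the identity $\Div(\varphi\,\curl v) = \nabla\varphi \cdot \curl v$ (valid because $\Div\curl v = 0$) and the divergence theorem one gets $\int_\Dsf \nabla\varphi \cdot \curl v = \int_{\partial\Dsf} \varphi\,(\curl v \cdot n) = 0$, since $\varphi = 0$ on $\partial\Dsf$. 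As $\curl\nabla\varphi = 0$, this is precisely the defining relation in \eqref{E:curl_zero}, so $\nabla\varphi \in H_0(\Dsf,\curl)$, and therefore $\psi = u - \nabla\varphi \in H_0(\Dsf,\curl)$.

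This being a standard decomposition, no part is genuinely difficult; the only point requiring care is the boundary condition for $\nabla\varphi$, where the integration by parts must be justified rigorously (for instance by approximating $\varphi$ in $H^1_0(\Dsf)$ by functions in $C^\infty_c(\Dsf)$, for which the identity is classical, and passing to the limit). Everything else follows from Lax–Milgram and the elementary inclusions between the spaces involved.
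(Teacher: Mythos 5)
Your proposal is correct and follows essentially the same route as the paper: the same auxiliary Poisson problem for $\varphi$, the same observation that the variational equation encodes $\Div\psi=0$, and the same integration-by-parts argument (using $u\in H^1_0(\Dsf)^3$ and $\varphi\in H^1_0(\Dsf)$) to verify the tangential boundary condition in the sense of \eqref{E:curl_zero}. The only cosmetic difference is that you check membership in $H_0(\Dsf,\curl)$ for $u$ and $\nabla\varphi$ separately, whereas the paper writes the same identities as a single chain for $\psi$.
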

\begin{proof}
    This follows directly by solving for given $u\in H^1_0(\Dsf)^3$: find $\varphi \in H^1_0(\Dsf)$, such that
\ben
\int_\Dsf \nabla \varphi \cdot \nabla v\;dx = \int_\Dsf u\cdot \nabla v\;dx \quad \text{ for all } v\in H^1_0(\Dsf). 
\een
Then $\psi:= u-\nabla \varphi$ satisfies \eqref{E:helmholtz_VD} and $\Div(\psi)=0$. To see the boundary condition note that since $u\in H^1_0(\Dsf)^3$, we have by partial integration
\ben
 \int_{\Dsf} \curl\psi\cdot v\;dx = \int_{\Dsf} \curl u\cdot v\;dx =  \int_{\Dsf} u\cdot \curl v\;dx=  \int_{\Dsf} u\cdot \curl v\;dx - \int_{\Dsf} \nabla \varphi \cdot \curl v \, dx
\een
for all $v\in H^1(\Dsf)^3$. Here we used that the last integral vanishes, which can be seen by partial integration due to $\varphi \in H_0^1(\Dsf)$. Noting that $\psi + \nabla \varphi = u$, it follows $\psi \in V(\Dsf)$; see \eqref{E:curl_zero}. This finishes the proof.
\end{proof}

We will now introduce a subspace of the space $\dot{BL}(\VR^3)^3$. The reason why we cannot work with 
$H(\VR^3,\curl)$ directly is that we do not have control over the function $u$ itself, but only over 
its curl. In order to get around this difficulty we introduce the following function space. We also refer to \cite{a_AMVOVO_2001a} for a different approach using weighted spaces. 
\begin{definition}
    We define the space 
 \ben
 BLC(\VR^3) := \overline{\{\varphi\in C_c^\infty(\VR^3)^3:\; \Div(\varphi)=0 \}}^{|\cdot |_{H(\VR^3,\curl)}},
 \een
 where $|\varphi|_{H(\VR^3,\curl)}^2 := \int_{\VR^3} |\curl \varphi|^2\;dx$.  We set $\dot{BLC}(\VR^3) := BLC(\VR^3)/\VR$, where $/\VR$ means that we quotient out constants. 
 \end{definition} 

 We have the following result. 
 \begin{lemma}
     \begin{itemize}
         \item[(i)] We have $BLC(\VR^3)\subset BL(\VR^3)^3$ and hence $\dot{BLC}(\VR^3)\subset \dot{BL}(\VR^3)^3$.
         \item[(ii)] The space  $\dot{BLC}(\VR^3)$ becomes a Hilbert space when equipped with $|\cdot|_{H(\VR^3,\curl)}$.
         \item[(iii)] We have 
             $\dot{BLC}(\VR^3) = \{u\in \dot{BL}(\VR^3)^3:\; \Div(u)=0 \}$.
    \end{itemize}
\end{lemma}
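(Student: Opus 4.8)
The plan is to derive all three claims from a single integration-by-parts identity, with only the last inclusion in (iii) requiring real work.

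First I would record the identity, valid for every $\varphi\in C_c^\infty(\VR^3)^3$,
\[
\int_{\VR^3}|\nabla\varphi|^2\,dx=\int_{\VR^3}|\curl\varphi|^2\,dx+\int_{\VR^3}|\Div\varphi|^2\,dx,
\]
obtained from two integrations by parts together with $-\Delta\varphi=\curl\curl\varphi-\nabla\Div\varphi$ (no boundary terms appear, since $\varphi$ has compact support). On the generating set $\{\varphi\in C_c^\infty(\VR^3)^3:\Div\varphi=0\}$ this gives $|\varphi|_{H(\VR^3,\curl)}=\|\nabla\varphi\|_{L_2(\VR^3)^3}$, so the inclusion of this set into $\dot{BL}(\VR^3)^3$ is isometric. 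Passing to completions and using that $\dot{BL}(\VR^3)^3$ is complete then identifies $\dot{BLC}(\VR^3)$ with a closed subspace of $\dot{BL}(\VR^3)^3$, which is (i). For (ii), the seminorm $|\cdot|_{H(\VR^3,\curl)}$ is the one induced by the symmetric bilinear form $(u,v)\mapsto\int_{\VR^3}\curl u\cdot\curl v\,dx$; the isometry just shown makes it a genuine norm on the quotient, and $\dot{BLC}(\VR^3)$ is complete by construction, so it is a Hilbert space.

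For (iii) I would set $W:=\{u\in\dot{BL}(\VR^3)^3:\Div u=0\}$. Since $u\mapsto\Div u$ is a continuous map $\dot{BL}(\VR^3)^3\to L_2(\VR^3)$, the set $W$ is a closed subspace, and every element of $\dot{BLC}(\VR^3)$ is an $L^2$-gradient limit of divergence-free fields and hence lies in $W$; this is the inclusion $\dot{BLC}(\VR^3)\subseteq W$. For the reverse inclusion, since both are closed subspaces of the Hilbert space $\dot{BL}(\VR^3)^3$, it suffices to show that the orthogonal complement of $\dot{BLC}(\VR^3)$ in $W$ is trivial; as the divergence-free fields in $C_c^\infty(\VR^3)^3$ are dense in $\dot{BLC}(\VR^3)$, I must show that any $v\in W$ with $\int_{\VR^3}\nabla v:\nabla\varphi\,dx=0$ for all such $\varphi$ is constant.

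The hard part will be this last step. The orthogonality relation says exactly that the vector distribution $-\Delta v$ annihilates every divergence-free test field, so by the classical lemma of de Rham there is a distribution $\pi$ with $-\Delta v=\nabla\pi$. Taking the divergence and using $\Div v=0$ shows $\pi$ is harmonic, hence --- its Fourier transform being supported at the origin --- a polynomial; on the other hand $\nabla\pi=-\Delta v$ lies in $\dot H^{-1}(\VR^3)$ because $v\in\dot H^1(\VR^3)$, and a polynomial can have gradient in $\dot H^{-1}(\VR^3)$ only if that gradient vanishes. Hence $\Delta v=0$, so each entry of $\nabla v$ is harmonic and belongs to $L_2(\VR^3)$; a harmonic $L^2$ function on $\VR^3$ vanishes identically, whence $\nabla v=0$ and $v$ is constant. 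This yields $W\subseteq\dot{BLC}(\VR^3)$ and completes (iii). I expect the invocation of de Rham's lemma together with the Fourier/Liouville argument eliminating the gradient part to be the only genuinely delicate point.
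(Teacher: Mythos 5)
Your parts (i) and (ii) follow the paper's own route exactly: the integration-by-parts identity $\int|\nabla\varphi|^2=\int|\curl\varphi|^2+\int|\Div\varphi|^2$ on divergence-free test fields, an isometric embedding into $\dot{BL}(\VR^3)^3$, and the observation that a closed subspace of a Hilbert space is a Hilbert space. For (iii) you take a genuinely different path. The paper argues directly by mollification: given $u\in\dot{BL}(\VR^3)^3$ with $\Div u=0$, it convolves with a mollifier, notes that the divergence constraint is preserved and that $\partial_{x_i}u_\eps\to\partial_{x_i}u$ in $L_2$, and concludes $u\in\dot{BLC}(\VR^3)$. You instead prove the reverse inclusion by a duality argument: since $\dot{BLC}(\VR^3)$ is closed in the closed subspace $W=\{u:\Div u=0\}$, it suffices to kill its orthogonal complement in $W$, and you do this via de Rham's lemma ($-\Delta v=\nabla\pi$), harmonicity and temperedness of $\pi$ (hence $\pi$ polynomial), the observation that a polynomial gradient lying in $\dot H^{-1}$ must vanish, and finally Liouville for the harmonic $L^2$ entries of $\nabla v$. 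Your argument is correct, and it has a real advantage: the paper's mollification step asserts that $\phi_\eps\ast u$ has compact support, which is false for a general $u\in L_{2,\mathrm{loc}}$, and repairing this by a cut-off is delicate because truncation destroys the divergence-free constraint (one would need a Bogovskii-type correction). Your orthogonality argument sidesteps that issue entirely, at the price of invoking de Rham's lemma and a Fourier/Liouville step, which are heavier tools than the elementary approximation the paper intends.
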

\begin{proof}
    We start by observing that (see \cite[Rem. 1.1]{p_SC_2018a})
\ben\label{E:curl_div_grad}
\int_{\VR^3} |\Div(\varphi)|^2 + \blue |\curl(\varphi)|^2\black \;dx = \int_{\VR^3} \blue |\nabla \varphi|^2\black \;dx
\een
holds for all test functions $\varphi\in C^\infty_c(\VR^3)^3$. Therefore we have
\ben\label{E:norm_beppo_maxwell}
|\varphi|_{H(\VR^3,\curl)}^2 = \int_{\VR^3}  \blue|\curl(\varphi)|^2\black \;dx = \int_{\VR^3} \blue |\nabla \varphi|^2 \black\;dx
\een
for all test functions $\varphi\in C^\infty_c(\VR^3)^3$ satisfying $\Div(\varphi)=0$. Let $(\varphi_n)$ be a sequence in $C^\infty_c(\VR^3)^3$ with $\Div(\varphi_n)=0$ that is Cauchy with respect to 
$|\cdot|_{H(\VR^3,\curl)}$. Then in view of \eqref{E:norm_beppo_maxwell} it also converges in $\dot BL(\VR^3)^3$ and hence its limit 
belongs to $\dot BL(\VR^3)^3$, which shows the inclusion (i). Also (ii) follows at once since a closed
subspace of a Hilbert space is a Hilbert space itself.

To see (iii) we can use standard mollifier techniques; see \cite[pp. 21]{b_ZI_1989a}. Let 
$u\in L_{2,loc}(\VR^3)^3$ with $\partial u\in L_2(\VR^3)^{3\times 3}$ and $\Div(u)=0$. 
Let $\phi\in C^\infty_c(\overbar{B_1(0)})$ with $\int_{\VR^3}\phi\;dx =1$. Set $\phi_\eps(x):= \eps^{-3}\phi(x / \eps )$ and define the convolution of $u$ with $\phi_\eps$ by $u_\eps(x) := (\phi_\eps\ast u)(x) := \int_{\VR^3}\phi_\eps(x-y)u(y)\;dy$. Then $u_\eps$ is smooth, has compact support and satisfies  $\partial_{x_i} u_\eps(x) = \phi_\eps\ast (\partial_{x_i}u)(x)$ and thus $\Div(u_\eps) = \phi_\eps\ast(\Div(u)) =0$. Since $\partial_{x_i}u\in L_2(\VR^3)^3$ we conclude from \cite[Thm. 1.6.1, (iii)]{b_ZI_1989a} that $\partial_{x_i} u_\eps \to \partial_{x_i} u$ strongly in $L_2(\VR^3)^3$ as $\eps \searrow 0$. But this means that $u\in \dot{BLC}(\VR^3)$ and finishes the proof. 
\end{proof}

We now prove a Helmholtz-type decomposition in $\dot{BL}(\VR^3)^3$.
It can be seen as an analogue of Lemma~\ref{lem:helmholtz_VD} in case $\Dsf =\VR^3$. We also refer to \cite{phd_SE_2018a} and \cite{a_SISO_1996a} for Helmholtz decompositions in 
exterior domains.

Let us introduce 
\ben
BL^2(\VR^3):= \{ \varphi \in L_{2,loc}(\VR^3):\; \partial_{x_i x_j}^2\varphi \in L_2(\VR^3), \; i,j\in \{1,2,3\}\},
\een
and the associated second order Beppo-Levi space $\dot{BL}^2(\VR^3) := BL^2(\VR^3)/P$, where 
$P:= \{x\mapsto b+x\cdot a: b\in \VR, a\in \VR^3\}$ denotes
the space of linear functions in $\VR^3$.
The function  
\ben
\|\varphi\|_{BL^2}:= \|\partial^2\varphi\|_{L_2(\VR^3)^{3\times 3}}, \quad \varphi\in  \dot{BL}^2(\VR^3)
\een
is a norm on $\dot{BL}^2(\VR^3)$ and makes it a Hilbert space; see \cite[Sec. III, Thm. 2.1]{a_DELI_1955a}. 

\begin{remark}
    We note that it makes sense to say that an equivalence class $\varphi \in \dot{BL}(\VR^3)^3$ has zero divergence $\Div(\varphi)=0$, since the divergence of a constant function is zero and hence the divergence free property is independent of the representative. 
\end{remark}

\begin{lemma}\label{L:helmholtz}
    For every $u\in \dot{BL}(\VR^3)^3$ there is $\varphi \in \dot{BL}^2(\VR^3)$ and $u^*\in \dot{BL}(\VR^3)^3$ with $\Div(u^*)=0$, such that 
   \ben\label{E:helmholtz}
   u = \nabla \varphi + u^* \quad (\text{in } \dot{BL}(\VR^3)^3).
    \een
    In fact, we have the direct sum $\dot{BL}(\VR^3)^3 = \nabla(\dot{BL}^2(\VR^3))\oplus \dot{BLC}(\VR^3)^3$.
\end{lemma}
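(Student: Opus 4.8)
The plan is to obtain \eqref{E:helmholtz} as the orthogonal decomposition of the Hilbert space $\dot{BL}(\VR^3)^3$ — equipped with the inner product $(u,v)\mapsto\int_{\VR^3}\nabla u:\nabla v\,dx$, whose induced norm is precisely $\|\nabla\cdot\|_{L_2(\VR^3)^{3\times3}}$ — into the closed subspace $\dot{BLC}(\VR^3)$ of divergence-free fields (closed, since it is a Hilbert space by the previous lemma) and its orthogonal complement, which I will identify with $\nabla(\dot{BL}^2(\VR^3))$. As a first step I would check that $\nabla(\dot{BL}^2(\VR^3))$ is a well-defined closed subspace: the map $\varphi\mapsto\nabla\varphi$ descends to the quotients, since it sends the affine functions $P$ to constant fields, i.e.\ to $0$ in $\dot{BL}(\VR^3)^3$, and conversely $\nabla\varphi$ constant forces $\varphi\in P$, so it is injective; moreover $\|\nabla(\nabla\varphi)\|_{L_2}=\|\partial^2\varphi\|_{L_2}=\|\varphi\|_{BL^2}$ shows it is an isometry, so, $\dot{BL}^2(\VR^3)$ being complete, its image is closed. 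Granting the orthogonal identification, the direct sum and uniqueness then follow automatically, as orthogonal subspaces meet only in $\{0\}$.

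The orthogonality $\nabla(\dot{BL}^2)\perp\dot{BLC}$ rests on the identity
\[
\int_{\VR^3}\nabla(\nabla\varphi):\nabla u^*\,dx = -\int_{\VR^3}\nabla\varphi\cdot\nabla(\Div u^*)\,dx = 0,
\]
obtained by integrating by parts, moving one derivative off $\varphi$ and collecting $\sum_i\partial_i u^*_i=\Div u^*=0$. I would verify it first for representatives in $C_c^\infty(\VR^3)$ and then pass to the limit, using that $C_c^\infty(\VR^3)/P$ is dense in $\dot{BL}^2(\VR^3)$ and that divergence-free fields in $C_c^\infty(\VR^3)^3$ are dense in $\dot{BLC}(\VR^3)$ by definition; continuity of the pairing is ensured by $|\int_{\VR^3}\partial^2\varphi:\nabla u^*\,dx|\le\|\varphi\|_{BL^2}\,|u^*|_{H(\VR^3,\curl)}$ together with \eqref{E:norm_beppo_maxwell}.

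It then remains to show that the two orthogonal subspaces span $\dot{BL}(\VR^3)^3$, which I would do constructively, in the spirit of Lemma~\ref{lem:helmholtz_VD}. Given $u\in\dot{BL}(\VR^3)^3$ one has $\Div u\in L_2(\VR^3)$, and I would solve, by the Riesz representation theorem, for $\varphi\in\dot{BL}^2(\VR^3)$ with
\[
\int_{\VR^3}\Delta\varphi\,\Delta\psi\,dx = \int_{\VR^3}\Div u\,\Delta\psi\,dx\qquad\text{for all }\psi\in\dot{BL}^2(\VR^3);
\]
here $(\varphi,\psi)\mapsto\int_{\VR^3}\Delta\varphi\,\Delta\psi\,dx$ is exactly the inner product of $\dot{BL}^2(\VR^3)$ (by the integration-by-parts identity $\int\Delta\varphi\,\Delta\psi=\int\partial^2\varphi:\partial^2\psi$), and the right-hand side is bounded since $|\int\Div u\,\Delta\psi|\le\|\Div u\|_{L_2}\|\psi\|_{BL^2}$. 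Testing with those $\psi$ solving $\Delta\psi=\eta$ for $\eta\in C_c^\infty(\VR^3)$ (such $\psi\in\dot{BL}^2$ exist by elliptic regularity for the Newtonian potential) yields $\Delta\varphi=\Div u$, whence $u^*:=u-\nabla\varphi$ satisfies $\Div u^*=0$ and lies in $\dot{BLC}(\VR^3)$ by part (iii) of the previous lemma. This produces \eqref{E:helmholtz}, and combined with the orthogonality above it gives the direct sum.

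I expect the genuine difficulty to be the functional-analytic bookkeeping forced by the homogeneous (quotient) spaces: establishing the isometry and closed range of $\nabla$ modulo $P$, the norm identity $\|\Delta\varphi\|_{L_2}=\|\partial^2\varphi\|_{L_2}$, and above all the whole-space solvability of $\Delta\psi=\eta$ within $\dot{BL}^2(\VR^3)$ — i.e.\ the $L_2$-based elliptic estimate for the second derivatives of the Newtonian potential — together with the density arguments needed to legitimize the integration by parts for fields that are not compactly supported.
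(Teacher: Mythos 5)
Your argument is correct, but it takes a genuinely different route from the paper. The paper constructs the decomposition explicitly: for $u\in C^\infty_c(\VR^3)^3$ it takes $\varphi$ to be the Newtonian potential of $\Div u$, invokes the Cald\'eron--Zygmund estimate $\|\partial^2\varphi\|_{L_2}\le C\|\Div u\|_{L_2}$, and passes to general $u$ by density of $C^\infty_c(\VR^3)^3/\VR$ in $\dot{BL}(\VR^3)^3$; directness is then proved separately by a Liouville-type argument (Weyl's lemma plus the mean value property, showing that a harmonic function with second derivatives in $L_2(\VR^3)$ is affine). You instead set the whole statement up as an orthogonal decomposition of the Hilbert space $\bigl(\dot{BL}(\VR^3)^3,\int\nabla u:\nabla v\bigr)$: closedness of $\nabla(\dot{BL}^2(\VR^3))$ via the isometry $\|\nabla(\nabla\varphi)\|_{L_2}=\|\varphi\|_{BL^2}$, orthogonality to $\dot{BLC}(\VR^3)$ by integration by parts against $\Div u^*=0$, and spanning via Riesz representation for the form $\int\Delta\varphi\,\Delta\psi$, recovering $\Delta\varphi=\Div u$ by testing with Newtonian potentials of $C^\infty_c$ functions. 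This buys you directness for free (orthogonal subspaces intersect trivially) and in fact yields the slightly stronger statement that the splitting is orthogonal in $\dot{BL}(\VR^3)^3$; it also sidesteps the question of whether the Newtonian potential of a general $L_2$ divergence converges pointwise. The price is that you lean on the density of $C^\infty_c(\VR^3)/P$ in $\dot{BL}^2(\VR^3)$ --- needed both for the identity $\|\Delta\varphi\|_{L_2}=\|\partial^2\varphi\|_{L_2}$ and for legitimizing the integration by parts --- which the paper never states; it is a standard Deny--Lions fact and should be cited explicitly, but it is not a gap. Note also that you do not escape Cald\'eron--Zygmund: it reappears when you assert that $\Delta\psi=\eta$ is solvable in $\dot{BL}^2(\VR^3)$ for $\eta\in C^\infty_c(\VR^3)$. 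Both proofs are sound; yours is structurally cleaner on uniqueness, the paper's is more explicit and self-contained on existence.
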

\begin{proof}
    We will use arguments from  \cite[Thm. 3.3]{a_SAVA_2004a}. Given $u\in C^\infty_c(\VR^3)^3$ we define $\varphi \in \blue C^\infty(\VR^3)^3$\black  as 
\ben
 \varphi(x) = -\frac{1}{4\pi}\int_{\VR^3} \frac{\Div u(y)}{\blue |x-y|\black }\;dy, \quad x\in \VR^3.
\een
Since $u$ is smooth and has compact support we have $\Delta \varphi = \Div u$ pointwise in $\VR^3$ (see \cite[p.21, Thm. 1]{b_EV_2010a}). The Cald\'eron-Zygmund theorem (see \cite{a_CAZY_1952a,a_CAZY_1989a} and also \blue \cite[Paragraph 9.4]{b_GITR_2001a} \black) implies that 
\ben\label{eq:bound_second_order}
\|\partial^2\varphi\|_{L_2(\VR^3)^{3\times 3}}\le C\|\Div u\|_{L_2(\VR^3)}.
\een
However, this means that $ \varphi \in \dot{BL}^2(\VR^3)$ and hence 
$u^* := \nabla \varphi - u$ satisfies $\Div(u^*)=0$ and $\nabla u^*\in L_2(\VR^3)^3$. Therefore  $u^* \in \dot{BLC}(\VR^3)$ and $u^*$ satisfies \eqref{E:helmholtz}.

Let now $u\in \dot{BL}(\VR^3)^3$ and $(u_n)\subset C_c^\infty(\VR^3)^3$ with $\partial u_n \to \partial u$ strongly in $L_2(\VR^3)^{3\times 3}$ as $n\to \infty$. The first part of the proof shows that we can split
$u_n = \nabla \varphi_n + u_n^*$ with $\varphi_n\in \dot{BL}(\VR^3)$ and $u_n^*\in \dot{BL}(\VR^3)^3$ satisfying $\Div(u_n^*)=0$. In view of \eqref{eq:bound_second_order} it follows that $(\varphi_n)$ is a Cauchy sequence in  $\dot{BL}^2(\VR^3)$ and thus converging to some $\varphi \in \dot{BL}^2(\VR^3)$.
From this it follows that also $(u_n^*)$ is a Cauchy sequence in $\dot{BL}(\VR^3)^3$ and converges to some $u^*\in \dot{BL}(\VR^3)^3$ satisfying $\Div(u^*)=0$. Now we can pass to the limit in  $\partial_{x_i} u_n = \partial_{x_i} \nabla \varphi_n + \partial_{x_i} u_n^*$, $i=1,2,3$ with respect to the $L_2(\VR^3)^3$ norm to obtain 
\ben
\partial_{x_i} (u -  \nabla \varphi - u^*)=0, \quad \text{ a.e. on }\VR^3, \quad i=1,2,3.
\een
It follows that $u- \nabla \varphi - u^*$ is constant on $\VR^3$. Therefore $u = \nabla \varphi + u^*$ in $\dot{BL}(\VR^3)^3$.  

To show that the sum is direct, we let $\tilde \varphi,\varphi \in \dot{BL}^2(\VR^3)$ and $\tilde u^*,u^*\in \dot{BLC}(\VR^3)^3$, such that $u = \nabla \varphi + u^* = \nabla \tilde \varphi + \tilde u^*$. Set $\hat \varphi := \tilde \varphi - \varphi$ and $\hat u^*:= \tilde u^* - u^*$. We have 
$\nabla \hat \varphi = -\hat u^*$ and thus since $\hat u^*$ is divergence free,  $\Delta \hat \varphi=0$, that is, $\hat\varphi$ is harmonic. By Weyl's lemma $\hat \varphi$ is smooth. Since $\hat \varphi$ is harmonic $v:= \partial^2_{x_ix_j} \hat\varphi$ is harmonic, too and hence enjoys the mean value property (see \cite[Thm. 2, p.25]{b_EV_2010a}):
\ben
v(x_0) = \frac{1}{|B_r(x_0)|}\int_{B_r(x_0)} v\;dx, \quad r>0, \; x_0\in \VR^3.
\een
Fix $x_0\in \VR^3$. Then we obtain from H\"older's inequality
\ben
|v(x_0)| \le  \frac{1}{|B_r(x_0)|}\int_{B_r(x_0)} |v|\;dx \le \frac{1}{|B_r(x_0)|^{1/2}} \|v\|_{L_2(B_r(x_0))} \le Cr^{-\frac32}\|\partial^2\hat \varphi\|_{L_2(\VR^3)^{3\times 3}}.
\een
Passing to the limit $r\to \infty$ we see that $v(x_0)=0$ and since $x_0$ was arbitrary we have $v=\partial^2_{x_ix_j} \hat\varphi = 0$ on $\VR^3$. Hence $\hat \varphi(x)=a\cdot x+b$ for some $a\in \VR^3,b\in \VR$ and thus the corresponding equivalence class $\hat \varphi =0$ in $\dot{BL}^2(\VR^3)^3$ or equivalently $\varphi=\tilde\varphi$ as elements in $\dot{BL}^2(\VR^3)^3$. In view of $a = \nabla \hat \varphi = -\hat u^*$ it follows that $\hat u^*=0$ in $\dot{BL}(\VR^3)^3$ or equivalently $u^*=\tilde u^*$. This shows that we have a direct sum. 

\end{proof}

The following example illustrates the usefulness of the function space $\dot{BLC}(\VR^3)$ and the Helmholtz-type decomposition.
\begin{example}
Let $\zeta \in \VR^3$ be a vector and let $\omega\subset \VR^3$ be an open 
and bounded set. Consider the problem: find $K\in \dot{BLC}(\VR^3)$ such that
\ben\label{EX:example}
\int_{\VR^3} \beta_\omega \curl K\cdot \curl v\;dx = \int_\omega \zeta \cdot \curl v\;dx \quad \text{ for all } v\in \dot{BLC}(\VR^3),
\een
where $\beta_\omega:= \beta_1 \chi_\omega + \beta_2 \chi_{\VR^3\setminus \omega}$ with $\beta_1,\beta_2>0$. This system appears in the derivation of the topological derivative for Maxwell's equation in the linear case; see \cite{a_MAPOSA_2005aa} on page 553. Thanks to the theorem of Lax-Milgram there exists a unique solution $K$ of \eqref{EX:example} in $\dot{BLC}(\VR^3)$. Moreover, for given $v\in \dot{BL}(\VR^3)^3$ we find according to Lemma~\ref{L:helmholtz} the decomposition $v = \nabla \varphi + v^*$ with $\varphi\in \dot{BL}^2(\VR^3)$ and $v^* \in \dot{BLC}(\VR^3)$. Therefore plugging $v^* = v - \nabla \varphi$ as test function in \eqref{EX:example} and using $\curl(\nabla \varphi)=0$, we obtain 
\ben\label{EX:example2}
\int_{\VR^3} \beta_\omega \curl K\cdot \curl v\;dx = \int_\omega \zeta \cdot \curl v\;dx \quad \text{ for all } v\in BL(\VR^3)^3.
\een
\end{example}


\section{Asymptotics of the state equation}\label{sec:state_analysis}
\subsection{Main result for direct state}
In this section we study the behaviour of $u_\eps -u_0$, where, \blue for $\eps>0$ \black, $u_\eps\in V(\Dsf)$ is the solution to 
\begin{align}\label{eq:state_per}
    \int_{\Dsf}  \Ca_\eps(x,\curl u_\eps ) \cdot \curl v \; dx = \langle F, v \rangle  \quad \mbox{for all } v \in V(\Dsf),
\end{align}
with $\Ca_\eps := \Ca_{\Omega_\eps}$ and $\Omega_\eps(z)  := \Omega \cup \omega_\eps(z)$ \blue and $u_0$ is the solution to \eqref{E:weakformulation}\black. \blue Here, $\Omega \Subset \Dsf$ is open and the scaled inclusion $\omega_\eps(z):= z + \eps \omega$ is defined by an open and bounded set $\omega\subset \VR^d$ satisfying $0\in \omega$ and the center of the inclusion $z \in \Omega \cup \Dsf \setminus \overbar \Omega$. For simplicity and without loss of generality, we will assume $z:=0 \in \Dsf\setminus \overbar \Omega$ throughout this paper.
\black


Using Lemma~\ref{L:regDecomp} we find the regular decomposition
\ben\label{E:deco_ueps}
u_\eps = \nabla \phi_\eps + u_\eps^*, \quad \phi_\eps\in H^1_0(\Dsf), \; u_\eps^* \in H^1_0(\Dsf)^3.
\een

\blue
\begin{definition}
The variation of $u_\eps$ is defined by 
\ben\label{E:var_Keps}
K_\eps  := \left(\frac{u_\eps - u_0}{\eps}\right) \circ T_\eps \in V(\eps^{-1}\Dsf), \quad \eps >0,
\een
and the variation of $u_\eps^*$ is defined by  
\ben\label{E:var_Keps_star}
K_\eps^*  := \left(\frac{u_\eps^* - u_0^*}{\eps}\right) \circ T_\eps \in H^1_0(\eps^{-1}\Dsf)^3, \quad \eps >0,
\een
where $T_\eps(x) := \eps x$ for $x\in \VR^3$. By extending $u_\eps^*$ by zero outside of $\Dsf$ we can view $K_\eps^*$ as a function in $\dot{BL}(\VR^3)^3$. 
\end{definition}
\black 

Now we can state our first main theorem.
\begin{maintheorem}\label{maintheorem_1}
    Assume  that $\curl u_0\in C^{\alpha}(\overline{B_\delta(z)})^3$ for some $\delta>0$ and $0<\alpha<1$. Then we have
    \begin{itemize}
        \item[(i)]  There exists a unique $K\in \dot{BLC}(\VR^3)$, such that
  \ben\label{E:limit_equation_W}
\begin{split} 
  \int_{\VR^3} & (\Ca_\omega (x, \curl K + U_0 )  -  \Ca_\omega (x, U_0  ))  \cdot \curl \varphi \; dx =  \\
								   &  - \int_\omega \big( a_1( U_0  ) - a_2(U_0)) \big) \cdot \curl \varphi \; dx
\end{split}
\een
for all $ \varphi \in BLC(\VR^3)$. Here $ U_0:= \curl(u_0)(z)$ and 
$\Ca_\omega(x,y) := a_1(y)\chi_{\omega}(x) +  a_2(y)\chi_{\VR^3\setminus \omega}(x)$.
\item[(ii)] 
    The family $(K_\eps^*)$ defined in \eqref{E:var_Keps_star}, satisfies
    \begin{align}
    \curl(K_\eps^*) & \rightarrow \curl(K) && \quad  \text{ strongly in } L_2(\VR^3)^3 \quad \text{ as }\eps \searrow 0. 
\end{align}
\end{itemize}
\end{maintheorem}
\begin{proof} 
Proof of (i): Thanks to Assumption~\ref{A:nonlinearity} the operator $B_\omega: \dot{BLC}(\VR^3) \to \dot{BLC}(\VR^3)^*$ defined by $\langle B_\omega\varphi,\psi\rangle := \int_{\VR^3} (\Ca_\omega(x,  \curl \varphi + U_0  ) - \Ca_\omega(x, U_0 ))\cdot \curl \psi \; dx$ is strongly monotone and Lipschitz continuous and hence the unique solvability follows by the theorem of Zarantonello; see \cite[p.504, Thm. 25.B]{b_ZE_1990a}. 

The proof of (ii) is given in the subsequent sections.
\end{proof}

Before turning our attention to the proof of (ii) let us make several remarks.
\begin{remark}
Notice that the regular decomposition \eqref{E:deco_ueps} is not necessarily unique. However, 
if we find another $\tilde \varphi_\eps\in H^1_0(\Dsf)$ and $\tilde u_\eps^* \in H^1_0(\Dsf)^3$ with $u_\eps = \nabla \tilde \varphi_\eps + \tilde u_\eps^*$, then 
$\curl(u_\eps) = \curl(u_\eps^*) = \curl(\tilde u_\eps^*)$, so $\curl(u_\eps)$ and accordingly $\curl(K_\eps^*)$ does not depend 
on the choice of decomposition in \eqref{E:deco_ueps}.  
\end{remark}

\begin{remark}
    We refer to \cite{a_AMVOVO_2001a} for a different functional setting (employed in the linear case) using weighted Sobolev spaces rather than subspaces of the Beppo-Levi space.
\end{remark}

\begin{remark}\label{rem:test_function_limit}
    Let us make an important remark. Equation \eqref{E:limit_equation_W} is actually only allowed to be tested with functions $v\in \dot{BL}(\VR^3)^3$ with $\Div(v)=0$. However, we can in fact test this equation with all functions in $\dot{BL}(\VR^3)^3$. To see this let $v\in \dot{BL}(\VR^3)^3$ be arbitrary. Thanks to Lemma~\ref{L:helmholtz} we find $\varphi\in \dot{BL}^2(\VR^3)$ and $v^*\in BL(\VR^3)^3$, such that $v = \nabla \varphi + v^*$. Since $v^* \in \dot{BLC}(\VR^3)$ we can use
$v^* = v - \nabla \varphi$ as test function in \eqref{E:limit_equation_W} and using $\curl \nabla \varphi =0$ we obtain 
  \ben\label{E:limit_equation_W2}
\begin{split}
  \int_{\VR^3} & (\Ca_\omega (x, \curl K + U_0 )  -  \Ca_\omega (x, U_0  ))  \cdot \curl v \; dx =  \\
								   &  - \int_\omega \big( a_1( U_0  ) - a_2(U_0)) \big) \cdot \curl v \; dx
\end{split}
\een
for all $ v \in BL(\VR^3)^3$. This will be used later on.
\end{remark}

\subsection{Analysis of the perturbed state equation}
We assume in the whole section that $\curl u_0\in C(\overline{B_\delta(z)})^3$ for some $\delta >0$. Moreover we assume that Assumption~\ref{A:nonlinearity}(i),(ii) are satisfied. 
 Let $u_\eps$ denote the solution to \eqref{eq:state_per}.
\begin{lemma}\label{lem:u_ueps}
	There is a constant $C>0$, such that for all small $\eps>0$, 
	\ben\label{eq:est_u_eps_D}
	\|u_\eps - u_0\|_{L_2(\Dsf)^3}+	\|\curl(u_\eps - u_0)\|_{L_2(\Dsf)^3} \le C\eps^{\blue 3/2\black}. 
	\een
\end{lemma}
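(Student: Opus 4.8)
The goal is an $O(\eps^{3/2})$ energy estimate for $u_\eps - u_0$. The natural approach is to exploit strong monotonicity together with the fact that the two state equations \eqref{eq:state_per} and \eqref{E:weakformulation} differ only on the small inclusion $\omega_\eps$, whose volume is $O(\eps^3)$. First I would subtract the two weak formulations. Since both $u_\eps$ and $u_0$ lie in $V(\Dsf)$, their difference $u_\eps - u_0 \in V(\Dsf)$ is an admissible test function. Subtracting gives
\ben
\int_{\Dsf} \big( \Ca_\eps(x,\curl u_\eps) - \Ca_\eps(x,\curl u_0) \big)\cdot \curl(u_\eps - u_0)\;dx = \int_{\Dsf} \big( \Ca_0(x,\curl u_0) - \Ca_\eps(x,\curl u_0) \big)\cdot \curl(u_\eps - u_0)\;dx,
\een
where on the right I have used that $u_0$ solves \eqref{E:weakformulation} with operator $\Ca_0 = \Ca_\Omega$ to rewrite $\langle F, u_\eps - u_0\rangle$.

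\textbf{Left-hand side.} By Assumption~\ref{A:nonlinearity}(i) applied to both $a_1$ and $a_2$, the integrand is bounded below pointwise by $c_1 |\curl(u_\eps - u_0)|^2$, so the left side dominates $c_1 \|\curl(u_\eps - u_0)\|_{L_2(\Dsf)^3}^2$.

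\textbf{Right-hand side.} The key observation is that $\Ca_\eps$ and $\Ca_0$ differ only on $\Omega_\eps \setminus \Omega = \omega_\eps$, where $\Ca_\eps(x,\cdot) = a_1$ while $\Ca_0(x,\cdot) = a_2$ (recall $z \in \Dsf\setminus\overbar\Omega$, so the inclusion sits in the $a_2$-region). Hence the right-hand integral reduces to an integral over $\omega_\eps$ of $(a_2(\curl u_0) - a_1(\curl u_0))\cdot \curl(u_\eps - u_0)$. Using the growth bound \eqref{rem_ai} and the assumed continuity $\curl u_0 \in C(\overline{B_\delta(z)})^3$, the factor $|a_2(\curl u_0) - a_1(\curl u_0)|$ is uniformly bounded on $\omega_\eps$ for small $\eps$. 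I then apply Cauchy--Schwarz on $\omega_\eps$:
\ben
\left| \int_{\omega_\eps} (a_2 - a_1)(\curl u_0)\cdot \curl(u_\eps - u_0)\;dx \right| \le C\, |\omega_\eps|^{1/2}\, \|\curl(u_\eps - u_0)\|_{L_2(\Dsf)^3}.
\een
Since $|\omega_\eps| = \eps^3 |\omega|$, the prefactor is $C\eps^{3/2}$.

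\textbf{Conclusion and the $L_2$ term.} Combining the two sides yields $c_1\|\curl(u_\eps - u_0)\|^2_{L_2} \le C\eps^{3/2}\|\curl(u_\eps - u_0)\|_{L_2}$, and dividing gives the $\eps^{3/2}$ bound on the curl. The full-norm estimate then follows from the Friedrich inequality $\|u\|_{L_2(\Dsf)^3} \le C\|\curl u\|_{L_2(\Dsf)^3}$ on $V(\Dsf)$, quoted in the excerpt, applied to $u_\eps - u_0 \in V(\Dsf)$. The main delicacy to watch is the uniform boundedness of the integrand on the shrinking inclusion: one must confirm that $\curl u_0$ stays in a fixed bounded set on $\omega_\eps \subset B_\delta(z)$ for $\eps$ small (which follows from continuity of $\curl u_0$ near $z$), rather than invoking any regularity of $\curl u_\eps$, which is not available. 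Everything else is a clean monotonicity-plus-small-volume argument; the scaling exponent $3/2$ comes entirely from $|\omega_\eps|^{1/2}=(\eps^3)^{1/2}$.
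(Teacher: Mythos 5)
Your proof is correct and follows essentially the same route as the paper: subtract the two weak formulations, test with $u_\eps - u_0$, use strong monotonicity on the left, and bound the right-hand side (supported only on $\omega_\eps$) via Cauchy--Schwarz, the growth bound \eqref{rem_ai}, and the continuity of $\curl u_0$ near $z$, finishing with Friedrich's inequality. The only difference is cosmetic bookkeeping in how the subtracted equation is written.
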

\begin{proof}
Subtracting \eqref{eq:state_per} for $\eps >0$ and $\eps =0$ yields
\ben\label{E:u_eps_difference}
\begin{split}
    \int_{\Dsf} (\Ca_\eps(x,\curl u_\eps) - & \Ca_\eps(x,\curl u_0))\cdot \curl\varphi \;dx \\ 
                                            & = - \int_{\omega_\eps} (a_1(\curl u_0) - a_2(\curl u_0))\cdot \curl \varphi\;dx, 
\end{split}
\een
for all $\varphi \in V(\Dsf)$. Hence choosing $\varphi = u_\eps - u_0$ as a test function, using H\"older's inequality and the monotonicity of $\Ca_\eps$, yield
\ben
c\|\curl (u_\eps - u_0)\|_{L_2(\Dsf)^3}^2 \le C\sqrt{|\omega_\eps|}( 1 + \|\curl u_0\|_{C(\overbar B_\delta(z))^3})\|\curl (u_\eps - u_0)\|_{L_2(\Dsf)^3},
\een
where we used \eqref{rem_ai}.  Now the result follows from $|\omega_\eps|= |\omega|\eps^3$ and the Friedrich's inequality.
\end{proof}

A direct consequence of Lemma~\ref{lem:u_ueps} and Lemma~\ref{L:regDecomp} is the following. 
Recall the splitting $u_\eps = \nabla \phi_\eps + u_\eps^*$ introduced in \eqref{E:deco_ueps}.
\begin{corollary}
Under the assumptions of Lemma~\ref{lem:u_ueps},  there are constants $C_1,C_2$, such that for all small $\eps >0$ we have
\ben
\|u_\eps^* - u_0^*\|_{L_2(\Dsf)^3}+	\|\partial(u_\eps^* - u^*_0)\|_{L_2(\Dsf)^{3\times 3}} \le C_1\eps^{\blue 3/2 \black}
\een
and 
\ben
\|\phi_\eps - \phi\|_{L_2(\Dsf)}+	\|\nabla(\phi_\eps - \phi)\|_{L_2(\Dsf)^3} \le C_2\eps^{\blue 3/2\black }.
\een
\end{corollary}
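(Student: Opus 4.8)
The plan is to derive both estimates directly from the regular decomposition of Lemma~\ref{L:regDecomp}, applied not to $u_\eps$ and $u_0$ separately but to their difference, and then to feed in the rate already obtained in Lemma~\ref{lem:u_ueps}. Since $u_\eps, u_0 \in V(\Dsf) \subset H_0(\Dsf,\curl)$, the difference $w_\eps := u_\eps - u_0$ again lies in $H_0(\Dsf,\curl)$, so Lemma~\ref{L:regDecomp} furnishes a splitting $w_\eps = \nabla \psi_\eps + w_\eps^*$ with $\psi_\eps \in H^1_0(\Dsf)$ and $w_\eps^* \in H^1_0(\Dsf)^3$, together with the two bounds $\|\psi_\eps\|_{H^1(\Dsf)} \le C\|w_\eps\|_{H(\Dsf,\curl)}$ and $\|w_\eps^*\|_{H^1(\Dsf)^3} \le C\|\curl w_\eps\|_{L^2(\Dsf)^3}$, where $C$ depends only on the fixed domain $\Dsf$ and hence is uniform in $\eps$.

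First I would reconcile this splitting with the one fixed in \eqref{E:deco_ueps}. Because the regular decomposition of Lemma~\ref{L:regDecomp} is realised by a fixed bounded linear operator $u \mapsto (\varphi, u^*)$ (as is clear from the construction in the cited references), applying it to $w_\eps = u_\eps - u_0$ produces exactly $\psi_\eps = \phi_\eps - \phi$ and $w_\eps^* = u_\eps^* - u_0^*$. Equivalently, one may simply take this as the definition of the splitting of $u_\eps$, since $u_\eps = \nabla(\phi + \psi_\eps) + (u_0^* + w_\eps^*)$ is itself a valid regular decomposition; the non-uniqueness noted in the remark following Main Theorem~\ref{maintheorem_1} is harmless here, as it affects only the curl-free part and leaves $\curl u_\eps^*$ unchanged.

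Next I would substitute the estimate of Lemma~\ref{lem:u_ueps}. For the divergence-free part, the curl bound gives $\|u_\eps^* - u_0^*\|_{H^1(\Dsf)^3} \le C\|\curl(u_\eps - u_0)\|_{L^2(\Dsf)^3} \le C\eps^{3/2}$, and since the full $H^1$-norm dominates both summands on the left-hand side, the first estimate follows. For the gradient part, I would first note that $\|u_\eps - u_0\|^2_{H(\Dsf,\curl)} = \|u_\eps - u_0\|^2_{L^2(\Dsf)^3} + \|\curl(u_\eps - u_0)\|^2_{L^2(\Dsf)^3} \le C\eps^{3}$ by Lemma~\ref{lem:u_ueps}; hence $\|\phi_\eps - \phi\|_{H^1(\Dsf)} \le C\|u_\eps - u_0\|_{H(\Dsf,\curl)} \le C\eps^{3/2}$, which yields the second estimate.

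The only genuinely delicate point is the reconciliation step: one must ensure the separate splittings of $u_\eps$ and $u_0$ are taken with respect to the \emph{same} linear decomposition operator, so that their difference is again a regular decomposition of $u_\eps - u_0$ and the estimates of Lemma~\ref{L:regDecomp} may be applied to it. Once this is granted, both bounds are immediate consequences of Lemmas~\ref{L:regDecomp} and~\ref{lem:u_ueps}, with $C_1, C_2$ depending only on the constants appearing there, and no further computation is required.
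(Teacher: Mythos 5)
Your proof is correct and follows essentially the same route as the paper, which states the corollary without proof as a ``direct consequence'' of Lemma~\ref{lem:u_ueps} and Lemma~\ref{L:regDecomp}: apply the regular decomposition estimates to the difference $u_\eps - u_0$ and insert the $O(\eps^{3/2})$ bound. Your explicit remark that the splittings of $u_\eps$ and $u_0$ must be taken via the same (linear, bounded) decomposition operator for the difference $u_\eps^* - u_0^*$ to inherit the estimates is a point the paper leaves implicit, and it is handled correctly.
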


The proof of Main Theorem~\ref{maintheorem_1} is split into several lemmas. 
The outline of the proof is as follows:
\blue
\begin{itemize}
    \item introduce an auxiliary function $H_\eps$ and decompose it into $H_\eps = \nabla \varphi_\eps + H_\eps^*$ 
    \item split \blue $K_\eps^* - K = K_\eps^* - H_\eps^* + H_\eps^* - K$ 
    \item show $\curl(H_\eps^* - K)\to 0$ strongly in $L_2(\VR^3)^3$
    \item show $\curl(H_\eps^* - K_\eps^*)\to 0$ strongly in $L_2(\VR^3)^3$
\end{itemize}
\black
The proof is following the main arguments of Theorem 4.3 in \cite{a_GanglSturm_2019a}. The main difference is that we cannot directly work with $K_\eps$ and $H_\eps$ but have to work with the functions $K_\eps^*$ and $H_\eps^*$ coming from the regular Helmholtz decomposition as in
Lemma~\ref{L:regDecomp}. 

\blue Let us first investigate the variation $H_\eps^* - K$. \black We start by changing variables in \eqref{E:u_eps_difference} to obtain an equation for $K_\eps^*\in H^1_0(\eps^{-1}\Dsf)^3$:
\ben\label{E:per_eps}
\begin{split}
    \int_{\VR^3} &  (\Ca_\omega(x,\curl K_\eps^* + \curl u_0(x_\eps) ) - \Ca_\omega(x, \curl u_0(x_\eps) ))\cdot \curl \varphi  \; dx \\
                &  = - \int_{\omega}(a_1 (\curl u_0 (x_\eps) ) - a_2 (\curl u_0(x_\eps) )) \cdot \curl \varphi \;dx
\end{split}
\een
for all $\varphi \in V(\eps^{-1}\Dsf)^3$. \blue Here $x_\eps := \eps x $ and $\curl u_0(x_\eps)$ denotes the $\curl $ of $u_0$ evaluated at $x_\eps$. \black 

We now introduce an approximation $H_\eps$ of $K_\eps$.
\begin{definition}
    We  define $H_\eps \in V(\eps^{-1}\Dsf)$ as the solution to 
\ben\label{E:approx_Keps}
\begin{split}
    \int_{\eps^{-1}\Dsf}  & (\Ca_\omega(x,\curl H_\eps + U_0 ) - \Ca_\omega(x, U_0 ))\cdot \curl \varphi \; dx\\
              & = - \int_{\omega}(a_1 (U_0) - a_2 (U_0)) \cdot \curl \varphi \;dx \quad \text{ for all } \varphi \in V(\eps^{-1} \Dsf).
\end{split}
\een
\end{definition}
\begin{remark}
    We can replace $V(\eps^{-1} \Dsf) $ as test space in \eqref{E:per_eps} and also in \eqref{E:approx_Keps}  by $H^1_0(\eps^{-1}\Dsf)$. Indeed in view of Lemma~\ref{lem:helmholtz_VD} we can decompose every $v\in H^1_0(\eps^{-1}\Dsf)^3$ as $v = \nabla \varphi + \psi$ with $\varphi \in H^1(\eps^{-1}\Dsf)$ and $\psi\in V(\eps^{-1}\Dsf)^3$. Hence we may test \eqref{E:approx_Keps} with $\varphi = \psi$ and using $\curl(\nabla \varphi)=0$ implies that we can test \eqref{E:approx_Keps} with all functions in $v\in H^1_0(\eps^{-1}\Dsf)^3$. Compare the $\VR^3$ analogue discussed in Remark~\ref{rem:test_function_limit}.
\end{remark}

Again we invoke Lemma~\ref{L:regDecomp} to decompose $H_\eps = \nabla \varphi_\eps + H_\eps^*$, $H_\eps^* \in H_0^1(\eps^{-1}\Dsf)^3$ and $\varphi_\eps \in H_0^1(\eps^{-1}\Dsf)$. We now introduce the projection of $K$ into the space $H^1_0(\eps^{-1}\Dsf)^3$:
    \begin{definition}\label{D:projection}
We define  $\blue \hat K_\eps^*\black \in H^1_0(\eps^{-1}\Dsf)^3$ as the minimiser of 
    \ben\label{E:hat_K_eps}
    \min_{\substack{\varphi \in H^1_0(\eps^{-1}\Dsf)^3\\ \Div\varphi =0 }} \| \curl(\varphi - K)\|_{L_2(\eps^{-1}\Dsf)^3}. 
    \een
    \end{definition}
    \blue
    The minimisation problem \eqref{E:hat_K_eps} admits indeed a unique solution. To see this, we let $\varphi_n \in H^1_0(\eps^{-1}\Dsf)^3$ be a minimising sequence, such that $\Div(\varphi_n) =0$ and
    \ben\label{eq:min_sequence}
\lim_{n\to \infty} \|\curl(\varphi_n - K)\|_{L_2(\eps^{-1}\Dsf)^3} = \inf_{\substack{\varphi \in H^1_0(\eps^{-1}\Dsf)^3\\ \Div\varphi =0 }} \| \curl(\varphi - K)\|_{L_2(\eps^{-1}\Dsf)^3}.
    \een
    Since the infimum on the right hand side is finite we conclude that there is $C>0$, such that $\|\curl \varphi_n\|_{L_2(\eps^{-1}\Dsf)^3} \le C$ for all $n$. On the other hand in view of \eqref{E:curl_div_grad} and $\Div(\varphi_n)=0$ we have
    \ben
\|\curl \varphi_n\|_{L_2(\eps^{-1}\Dsf)^3} = \|\nabla \varphi_n\|_{L_2(\eps^{-1}\Dsf)^3}.
    \een
    Therefore $(\varphi_n)$ is bounded in $H^1_0(\eps^{-1}\Dsf)$ and we find a weakly converging subsequence (denoted the same) converging to some element $\varphi\in H^1_0(\eps^{-1}\Dsf)$ satisfying $\Div(\varphi)=0$. Since also $\curl(\varphi_n) \rightharpoonup \curl(\varphi)$ weakly in $L_2(\eps^{-1}\Dsf)^3$, we conclude 
    \ben
    \|\curl(\varphi-K)\|_{L_2(\eps^{-1}\Dsf)} \le \lim_{n\to \infty} \|\curl(\varphi_n - K)\|_{L_2(\eps^{-1}\Dsf)^3},
    \een
    which together with \eqref{eq:min_sequence} shows that \eqref{E:hat_K_eps} admits a solution.  The uniqueness follows from that fact that $\varphi \mapsto \|\curl(\varphi)\|_{L_2(\eps^{-1}\Dsf)^3}^2$ is strictly convex on $\{\varphi\in H^1_0(\eps^{-1}\Dsf):\; \Div(\varphi)=0\}$.  
\black

    As for \blue $K_\eps^*$\black, we can also view \blue$H_\eps^*$ and $\hat K_\eps^*$ \black as elements of $BL(\VR^3)^3$ by extending them by $0$ outside of $\eps^{-1}\Dsf$.
    
\begin{lemma}\label{L:projection_convergence}
 It holds that
        \ben\label{E:min_project}
            \curl \hat K_\eps^* \to \curl K \quad  \mbox{ strongly in }L_2(\VR^3)^3 \text{ as }  \eps \searrow 0.
        \een       
    \end{lemma}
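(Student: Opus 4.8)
The plan is to exploit that $\hat K_\eps^*$ is, by construction, the best approximation of $K$ (measured in the $\curl$-seminorm) among divergence-free fields in $H^1_0(\eps^{-1}\Dsf)^3$, and that this admissible class exhausts the natural limiting space $\dot{BLC}(\VR^3)$ as $\eps\searrow 0$. Concretely, fix $\eta>0$. Since $K\in\dot{BLC}(\VR^3)$ and this space is by definition the $|\cdot|_{H(\VR^3,\curl)}$-closure of $\{\psi\in C^\infty_c(\VR^3)^3:\Div\psi=0\}$, I would pick a divergence-free $\psi\in C^\infty_c(\VR^3)^3$ with $\|\curl(K-\psi)\|_{L_2(\VR^3)^3}<\eta$.

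Next I would use that the inclusion point $0$ lies in the open set $\Dsf$, so $\Dsf\supset B_r(0)$ for some $r>0$ and hence $\eps^{-1}\Dsf\supset B_{r/\eps}(0)$ exhausts $\VR^3$ as $\eps\searrow 0$. In particular, once $\eps$ is small enough that $\supp\psi\subset\eps^{-1}\Dsf$, the field $\psi$ restricts to an element of $C^\infty_c(\eps^{-1}\Dsf)^3\subset H^1_0(\eps^{-1}\Dsf)^3$ with $\Div\psi=0$, so it is admissible in the minimisation problem \eqref{E:hat_K_eps}. By the defining minimality of $\hat K_\eps^*$,
\[
\|\curl(\hat K_\eps^* - K)\|_{L_2(\eps^{-1}\Dsf)^3} \le \|\curl(\psi - K)\|_{L_2(\eps^{-1}\Dsf)^3} \le \|\curl(\psi - K)\|_{L_2(\VR^3)^3} < \eta .
\]

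To upgrade this to an estimate on all of $\VR^3$, I recall that $\hat K_\eps^*\in H^1_0(\eps^{-1}\Dsf)^3$, extended by zero, lies in $H^1(\VR^3)^3$ and its curl is the zero-extension of $\curl\hat K_\eps^*$; in particular $\curl\hat K_\eps^* = 0$ on $\VR^3\setminus\eps^{-1}\Dsf$. Splitting the $L_2(\VR^3)^3$-norm over $\eps^{-1}\Dsf$ and its complement then gives
\[
\|\curl(\hat K_\eps^* - K)\|_{L_2(\VR^3)^3}^2 = \|\curl(\hat K_\eps^* - K)\|_{L_2(\eps^{-1}\Dsf)^3}^2 + \|\curl K\|_{L_2(\VR^3\setminus\eps^{-1}\Dsf)^3}^2 .
\]
The first summand is $<\eta^2$ by the previous step. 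For the second, since $\curl K\in L_2(\VR^3)^3$ and $\chi_{\VR^3\setminus\eps^{-1}\Dsf}\to 0$ pointwise a.e. as $\eps\searrow 0$ (by the exhaustion property), dominated convergence yields $\|\curl K\|_{L_2(\VR^3\setminus\eps^{-1}\Dsf)^3}\to 0$. Hence $\limsup_{\eps\searrow 0}\|\curl(\hat K_\eps^* - K)\|_{L_2(\VR^3)^3}\le\eta$, and since $\eta>0$ was arbitrary the claim \eqref{E:min_project} follows.

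The only genuine subtlety, and the step I would be most careful with, is the zero-extension identity for the curl, namely that extending $\hat K_\eps^*$ by zero produces an $H^1(\VR^3)^3$ field whose curl is the zero-extension of its curl, so that no spurious surface term appears on $\partial(\eps^{-1}\Dsf)$. This is precisely what the $H^1_0$ membership provides, and it is what legitimises splitting the $L_2(\VR^3)^3$-norm over $\eps^{-1}\Dsf$ and its complement. Everything else — the best-approximation inequality, the density defining $\dot{BLC}(\VR^3)$, and the domain exhaustion — is routine.
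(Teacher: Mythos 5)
Your argument is correct, and it takes a genuinely different route from the paper. The paper first derives the Euler--Lagrange equation of the minimisation problem \eqref{E:hat_K_eps}, tests it with $\hat K_\eps^*$ to get a uniform bound on $\|\nabla \hat K_\eps^*\|_{L_2(\VR^3)^{3\times 3}}$, extracts a weakly convergent subsequence, identifies the weak limit as $K$ via the variational equation and the density of $C^\infty_c(\VR^3)/\VR$ in $\dot{BL}(\VR^3)$, and finally upgrades to strong convergence by combining weak convergence with convergence of norms. You instead exploit the best-approximation property directly: density of divergence-free $C^\infty_c(\VR^3)^3$ fields in $\dot{BLC}(\VR^3)$ supplies a competitor $\psi$ with $\|\curl(K-\psi)\|_{L_2(\VR^3)^3}<\eta$ that becomes admissible once $\supp\psi\subset\eps^{-1}\Dsf$, and the quasi-optimality inequality plus the zero-extension identity for the curl of an $H^1_0(\eps^{-1}\Dsf)^3$ field and the tail estimate $\|\curl K\|_{L_2(\VR^3\setminus\eps^{-1}\Dsf)^3}\to 0$ give $\limsup_{\eps\searrow 0}\|\curl(\hat K_\eps^*-K)\|_{L_2(\VR^3)^3}\le\eta$. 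Your version is shorter and more elementary: it avoids weak compactness, subsequence extraction, and the weak-plus-norm-convergence upgrade entirely, and it is quantitative in the sense that the error is controlled by the best approximation of $K$ from compactly supported fields plus the tail of $\curl K$. What the paper's route buys in exchange is the explicit variational characterisation \eqref{E:hat_K_eps2} of $\hat K_\eps^*$ as an orthogonal projection in the $\curl$-inner product, which is conceptually aligned with how $\hat K_\eps^*$ is used afterwards, though the subsequent lemmas in fact only consume the strong convergence statement, so nothing downstream is lost by your argument. The two points you flag as delicate, namely the admissibility of $\psi$ for small $\eps$ (which uses $0\in\Dsf$ open, hence the exhaustion $\eps^{-1}\Dsf\supset B_{r/\eps}(0)$) and the absence of a surface contribution when extending $\hat K_\eps^*$ by zero, are exactly the right ones, and both are handled correctly.
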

    \begin{proof}
        We readily check that the minimiser to \eqref{E:hat_K_eps} satisfies
        \ben\label{E:hat_K_eps2}
        \int_{\eps^{-1} \Dsf} \curl \blue \hat K_\eps^* \black \cdot \curl \varphi \;dx = \int_{\eps^{-1} \Dsf} \curl K \cdot \curl \varphi \;dx \quad \text{ for all } \varphi\in H^1_0(\eps^{-1}\Dsf)^3, \; \Div(\varphi)=0. 
        \een
        Choosing $\varphi = \blue \hat K_\eps^* \black $ and using H\"older's inequality and the fact that (see \eqref{E:curl_div_grad})
        \ben
        \|\curl v\|_{L_2(\eps^{-1}\Dsf)^3} = \|\nabla v\|_{L_2(\eps^{-1}\Dsf)^3} \quad \text{ for all } v\in H_0^1(\eps^{-1}\Dsf)^3 \text{ with } \Div(v)=0,
        \een
        we obtain 
        \ben
        \begin{split}
            \|\nabla  \blue \hat K_\eps^* \black\|_{L_2(\eps^{-1}\Dsf)^3}^2 & = \|\curl \blue \hat K_\eps^* \black \|_{L_2(\eps^{-1}\Dsf)^3}^2 \\
                                                           & \le \|\curl K\|_{L_2(\eps^{-1}\Dsf)^3}\|\curl \blue \hat K_\eps^* \black\|_{L_2(\eps^{-1}\Dsf)^3} \\
                                                           & = \|\curl K\|_{L_2(\eps^{-1}\Dsf)^3}\|\nabla \blue \hat K_\eps^* \black\|_{L_2(\eps^{-1}\Dsf)^3}.
        \end{split}
   \een
This implies $\|\nabla \blue \hat K_\eps^* \black\|_{L_2(\VR^3)^3} \le C$ for all $\eps > 0$.  
        Now fix $\tilde \eps >0$ and let $\eps \in (0,\tilde \eps)$. Then we obtain from \eqref{E:hat_K_eps2} (by extending $K$ and $\blue \hat K_\eps^* \black$ by zero outside of $\eps^{-1}\Dsf$), 
\ben\label{E:hat_K_eps2_}
\int_{\VR^3} \curl \blue \hat K_\eps^* \black \cdot \curl \varphi \;dx = \int_{\VR^3} \curl K \cdot \curl \varphi \;dx \quad \text{ for all } \varphi\in H^1_0(\tilde \eps^{-1}\Dsf), \; \Div(\varphi)=0.
        \een
        Let $(\eps_n)$ be a null-sequence. In view of the boundedness of $(\blue \hat K_{\eps_n}^* \black)$ in $\dot{BL}(\VR^3)$, we can extract a subsequence (denoted the same) and find $\tilde K \in \dot{BL}(\VR^3)$, such that $\partial \blue \hat K_{\eps_n}^* \black \tow  \partial  \tilde K$ and thus also  $\curl \blue \hat K_{\eps_n}^* \black \tow  \curl  \tilde K$ weakly in $L_2(\VR^3)^3$. Therefore, selecting $\eps = \eps_n$ in  \eqref{E:hat_K_eps2_} we can pass to the limit $n \to \infty$ to obtain 
\ben\label{E:hat_K_eps3}
\int_{\VR^3} \curl \tilde K \cdot \curl \varphi \;dx = \int_{\VR^3} \curl K \cdot \curl \varphi \;dx \quad \text{ for all } \varphi\in H^1_0(\tilde \eps^{-1}\Dsf), \; \Div(\varphi)=0.
        \een
        Since $\Div(\blue \hat K_\eps^* \black)=0$ for all $\eps>0$ and in view of the weak convergence $\partial \blue \hat K_{\eps_n}^* \black \tow \partial\tilde K$, it is also readily checked that $\Div(\tilde K)=0$.
        Since $\tilde \eps $ was arbitrary and since $C^\infty_c(\VR^3)/\VR$ is dense in $\dot{BL}(\VR^3)$
        it follows that  \eqref{E:hat_K_eps3} holds for test functions in $BL(\VR^3)^3$ from 
        which we conclude that $\tilde K = K$. Therefore $\blue \hat K_\eps^* \black \tow K$ weakly in $\dot{BL}(\VR^3)^3$. 
        The strong convergence follows by testing \eqref{E:hat_K_eps2} with $\varphi = \blue \hat K_\eps^* \black$ and passing to the limit $\eps\searrow 0$. This shows that $\|\curl \blue \hat K_\eps^* \black\|_{L_2(\VR^3)^3} \to \|\curl K\|_{L_2(\VR^3)^3}$ as $\eps \searrow 0$. Since in a Hilbert space norm convergence together with weak convergence implies strong convergence we 
        finish the proof.
    \end{proof}

\begin{lemma}\label{L:Heps_K}
We have
\ben
\curl \blue H_\eps^* \black \to \curl K \quad \text{ strongly in } L_2(\VR^3)^3 \text{ as }  \eps \searrow 0. 
\een
    \end{lemma}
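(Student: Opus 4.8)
The plan is to sidestep the mismatch between the domain $\eps^{-1}\Dsf$ of $H_\eps^*$ and the whole space $\VR^3$ underlying $K$ by comparing $H_\eps^*$ not with $K$ itself but with the projected limit $\hat K_\eps^*$ of Definition~\ref{D:projection}. The structural observation that drives everything is that the defining equation \eqref{E:approx_Keps} for $H_\eps$ and the limit equation \eqref{E:limit_equation_W2} for $K$ carry \emph{identical} right-hand sides $-\int_\omega(a_1(U_0)-a_2(U_0))\cdot\curl(\cdot)\,dx$; this is exactly why $H_\eps$ was frozen at the coefficient $U_0=\curl u_0(z)$, and it lets the source terms cancel when the two equations are subtracted.

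First I would set $\varphi_\eps := H_\eps^* - \hat K_\eps^*$ and $w_\eps := \curl\varphi_\eps = \curl H_\eps^* - \curl \hat K_\eps^*$, recalling that $\curl H_\eps = \curl H_\eps^*$. Extended by zero, $\varphi_\eps$ is admissible in both equations: the Remark following \eqref{E:approx_Keps} allows testing with any element of $H^1_0(\eps^{-1}\Dsf)^3$, and since $\varphi_\eps$ also lies in $BL(\VR^3)^3$, Remark~\ref{rem:test_function_limit} permits testing \eqref{E:limit_equation_W2} with it as well. Because $w_\eps$ is supported in $\eps^{-1}\Dsf$, the integrals over $\VR^3$ and over $\eps^{-1}\Dsf$ agree. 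Testing both equations with $\varphi_\eps$ and subtracting, the source terms and the $\Ca_\omega(x,U_0)$ terms cancel, leaving
\[
\int_{\eps^{-1}\Dsf}\big(\Ca_\omega(x,\curl H_\eps^*+U_0)-\Ca_\omega(x,\curl K+U_0)\big)\cdot w_\eps\,dx = 0.
\]

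Next I would apply strong monotonicity (Assumption~\ref{A:nonlinearity}(i)) to the arguments $\curl H_\eps^*+U_0$ and $\curl \hat K_\eps^*+U_0$, whose difference is $w_\eps$, so that $c_1\|w_\eps\|_{L_2(\eps^{-1}\Dsf)^3}^2$ is bounded above by $\int_{\eps^{-1}\Dsf}(\Ca_\omega(x,\curl H_\eps^*+U_0)-\Ca_\omega(x,\curl \hat K_\eps^*+U_0))\cdot w_\eps\,dx$. Using the cancellation identity to replace $\Ca_\omega(x,\curl H_\eps^*+U_0)$ by $\Ca_\omega(x,\curl K+U_0)$, then the Lipschitz estimate (Assumption~\ref{A:nonlinearity}(ii)) and Hölder's inequality, gives
\[
c_1\|w_\eps\|_{L_2(\VR^3)^3}^2 \le c_2\,\|\curl K-\curl \hat K_\eps^*\|_{L_2(\VR^3)^3}\,\|w_\eps\|_{L_2(\VR^3)^3}.
\]

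Dividing yields $\|w_\eps\|_{L_2(\VR^3)^3}\le \tfrac{c_2}{c_1}\|\curl K-\curl \hat K_\eps^*\|_{L_2(\VR^3)^3}$, and Lemma~\ref{L:projection_convergence} forces the right-hand side to $0$ as $\eps\searrow0$. The claim then follows from $\|\curl H_\eps^*-\curl K\|_{L_2(\VR^3)^3}\le \|w_\eps\|_{L_2(\VR^3)^3}+\|\curl \hat K_\eps^*-\curl K\|_{L_2(\VR^3)^3}$. I do not expect a genuine analytic obstacle here: the entire difficulty of the expanding domain $\eps^{-1}\Dsf\nearrow\VR^3$ has already been isolated in Lemma~\ref{L:projection_convergence}, so what remains is the purely algebraic monotonicity/Lipschitz manipulation above. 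The only points requiring care are verifying the admissibility of $\varphi_\eps$ in each (differently-defined) test space and tracking the support of $w_\eps$ so that the $\eps^{-1}\Dsf$- and $\VR^3$-integrals may be freely identified.
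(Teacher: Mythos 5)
Your proposal is correct and follows essentially the same route as the paper: both subtract \eqref{E:approx_Keps} from \eqref{E:limit_equation_W} (using the identical right-hand sides), test with $H_\eps^*-\hat K_\eps^*$, apply strong monotonicity to the pair $\curl H_\eps^*+U_0$, $\curl \hat K_\eps^*+U_0$, bound the resulting term via Lipschitz continuity and H\"older by $\|\curl(\hat K_\eps^*-K)\|_{L_2(\VR^3)^3}$, and conclude with Lemma~\ref{L:projection_convergence} and the triangle inequality. The only difference is cosmetic --- you test first and then rearrange, while the paper inserts the zero term $\pm\,\Ca_\omega(x,\curl \hat K_\eps^*+U_0)$ before testing.
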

    \begin{proof}
        Subtracting \eqref{E:approx_Keps} from \eqref{E:limit_equation_W} and \blue introducing a zero term leads to \black
\ben\label{E:rewrite_first_approx}
\begin{split}
    \int_{\VR^3}   & (\Ca_\omega(x,\curl \blue \hat K_\eps^* \black + U_0 ) - \Ca_\omega(x,\curl H_\eps^* + U_0 ))\cdot \curl \varphi \; dx\\
                   & = \int_{\VR^3}   (\Ca_\omega(x,\curl \blue \hat K_\eps^* \black + U_0 ) - \Ca_\omega(x, \curl K + U_0 ))\cdot \curl \varphi \; dx
               \end{split}
\een
for all $\varphi \in H^1_0(\eps^{-1} \Dsf)^3$. Here we used the observation of Remark~\ref{rem:test_function_limit} and $H_0^1(\eps^{-1}\Dsf)^3\subset \dot{BL}(\VR^3)^3$. Now we test this equation with $\varphi = \blue \hat K_\eps^* \black - H_\eps^* \in H^1_0(\eps^{-1}\Dsf)\subset \dot{BL}(\VR^3)^3$, use the monotonicity of $\Ca_\omega$ and H\"older's inequality:
\ben\label{E:strong_Keps_Heps}
\begin{split}
    C\|\curl(\blue \hat K_\eps^* \black & - H_\eps^*) \|_{L_2(\VR^3)^3}^2 \\
                         & \le 
                         \int_{\VR^3}   (\Ca_\omega(x,\curl\blue \hat K_\eps^* \black + U_0 ) - \Ca_\omega(x,\curl H_\eps^* + U_0 ))\cdot \curl (\blue \hat K_\eps^* \black - H_\eps^* ) \; dx
                  \\
                         & \stackrel{\eqref{E:rewrite_first_approx}}{=} \int_{\VR^3}   (\Ca_\omega(x,\curl \blue \hat K_\eps^* \black + U_0 ) - \Ca_\omega(x, \curl  K + U_0 ))\cdot \curl (\blue \hat K_\eps^* \black - H_\eps^* ) \; dx\\
                          & \le \|\curl(\blue \hat K_\eps^* \black - K) \|_{L_2(\VR^3)^3} \|\curl(\blue \hat K_\eps^* \black - H_\eps^* ) \|_{L_2(\VR^3)^3}.
\end{split}
\een
It follows from Lemma~\ref{L:projection_convergence}, we have $\curl\blue \hat K_\eps^* \black \to \curl K$ strongly in $L_2(\VR^3)^3$. Therefore \eqref{E:strong_Keps_Heps} implies $\curl (\blue \hat K_\eps^* \black - H_\eps^* ) \to 0$ strongly in $L_2(\VR^3)^3$ and therefore also $\|\curl(H_\eps^*  - K)\|_{L_2(\VR^3)^3} \le \|\curl(H_\eps^*  - \blue \hat K_\eps^* \black)\|_{L_2(\VR^3)^3} + \|\curl(\blue \hat K_\eps^* \black - K)\|_{L_2(\VR^3)^3} \to 0$ as $\eps \searrow 0$.
\end{proof}


We now prove that $\curl \blue (H_\eps^* - K_\eps^*) \black \to 0$ strongly in $L_2(\VR^3)^3$.
    \begin{lemma}\label{L:Heps_Keps}
 Assume there are $\delta >0$ and $\alpha >0$, such that $\curl u_0\in C^\alpha(\overline{B_\delta(z)})^3$. Then we have
\ben
\curl \blue(H_\eps^* - K_\eps^*) \black \to 0 \quad \text{ strongly in }  L_2(\VR^3)^3 \; \text{ as } \eps \searrow 0.
\een
    \end{lemma}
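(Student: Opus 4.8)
The plan is to subtract the defining equations for $H_\eps$ and $K_\eps^*$ and test with their difference, using the strong monotonicity from Assumption~\ref{A:nonlinearity}(i) to produce a coercive term. Recall that $\curl H_\eps = \curl H_\eps^*$, $\curl K_\eps = \curl K_\eps^*$, that $w_\eps := K_\eps^* - H_\eps^* \in H^1_0(\eps^{-1}\Dsf)^3$ is (after extension by zero) an admissible test function in both \eqref{E:per_eps} and \eqref{E:approx_Keps} by the remark following \eqref{E:approx_Keps}, and write $x_\eps := \eps x$. Subtracting \eqref{E:approx_Keps} from \eqref{E:per_eps}, testing with $\varphi = w_\eps$, and inserting $\pm\,\Ca_\omega(x,\curl H_\eps^* + \curl u_0(x_\eps))$ in the first integrand, I would isolate
\[
\int_{\VR^3}(\Ca_\omega(x,\curl K_\eps^* + \curl u_0(x_\eps)) - \Ca_\omega(x,\curl H_\eps^* + \curl u_0(x_\eps)))\cdot \curl w_\eps\;dx \ge c_1\|\curl w_\eps\|_{L_2(\VR^3)^3}^2,
\]
the lower bound following from Assumption~\ref{A:nonlinearity}(i) since the two arguments differ exactly by $\curl K_\eps^* - \curl H_\eps^* = \curl w_\eps$.

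Next I would move the remaining contributions to the right-hand side. They are of two types. First, the mismatch of the data terms yields an integral over $\omega$ of $[(a_1-a_2)(\curl u_0(x_\eps)) - (a_1-a_2)(U_0)]\cdot \curl w_\eps$; since $\omega$ is bounded and $\eps\omega\subset B_\delta(z)$ for small $\eps$, the H\"older continuity of $\curl u_0$ near $z$ gives $|\curl u_0(x_\eps) - U_0|\le C\eps^\alpha|x|^\alpha\le C\eps^\alpha$ on $\omega$, and Assumption~\ref{A:nonlinearity}(ii) bounds this term by $C\eps^\alpha\|\curl w_\eps\|_{L_2(\VR^3)^3}$. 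Second, there is the ``second difference''
\[
\int_{\VR^3}\Big\{[\Ca_\omega(x,\curl H_\eps^*+\curl u_0(x_\eps)) - \Ca_\omega(x,\curl H_\eps^*+U_0)] - [\Ca_\omega(x,\curl u_0(x_\eps)) - \Ca_\omega(x,U_0)]\Big\}\cdot \curl w_\eps\;dx,
\]
which, writing it as $\Phi(\curl H_\eps^*) - \Phi(0)$ for $\Phi(y):=\Ca_\omega(x,y+\curl u_0(x_\eps)) - \Ca_\omega(x,y+U_0)$ and applying the fundamental theorem of calculus together with the Lipschitz continuity of $\partial a_i$ in Assumption~\ref{A:nonlinearity}(iii), I would bound by $c_3\int_{\VR^3}|\curl u_0(x_\eps) - U_0|\,|\curl H_\eps^*|\,|\curl w_\eps|\;dx \le c_3\|(\curl u_0(x_\eps)-U_0)\,\curl H_\eps^*\|_{L_2(\VR^3)^3}\|\curl w_\eps\|_{L_2(\VR^3)^3}$. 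Dividing by $\|\curl w_\eps\|_{L_2(\VR^3)^3}$ reduces the lemma to showing
\[
\|(\curl u_0(x_\eps) - U_0)\,\curl H_\eps^*\|_{L_2(\VR^3)^3} \to 0 \qquad \text{as } \eps\searrow 0.
\]

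To establish this I would use that, by Lemma~\ref{L:Heps_K}, $\curl H_\eps^* \to \curl K$ strongly in $L_2(\VR^3)^3$, so $|\curl H_\eps^*|^2$ is tight and converges in $L_1(\VR^3)$. I would split $\VR^3$ into a fixed ball $B_R$, an annulus $\{R<|x|\le \delta/\eps\}$, and a far region $\{|x|>\delta/\eps\}$. On $B_R$ the H\"older estimate gives $\sup_{B_R}|\curl u_0(x_\eps)-U_0|\le C\eps^\alpha R^\alpha\to 0$, so this piece vanishes against the bounded $\|\curl H_\eps^*\|_{L_2(\VR^3)^3}$; on the annulus the same estimate yields the uniform bound $C\delta^\alpha$, and the contribution is controlled by $C\delta^{2\alpha}\int_{B_R^c}|\curl H_\eps^*|^2\,dx\to C\delta^{2\alpha}\int_{B_R^c}|\curl K|^2\,dx$, small for large $R$; and the far region recedes to infinity as $\eps\searrow 0$, where tightness forces $\int_{\{|x|>\delta/\eps\}}|\curl H_\eps^*|^2\,dx\to 0$. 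The main obstacle is precisely this far region: there $\curl u_0(x_\eps)-U_0$ is no longer small and only $L_2$-information on $\curl u_0$ is available a priori, so one needs a uniform (essential) bound on $\curl u_0$ away from $z$ to convert the vanishing $L_2$-mass of $\curl H_\eps^*$ into smallness. Equivalently, once $\curl u_0$ is known to be bounded, the entire claim follows directly from the generalized dominated convergence theorem, applied to $|\curl u_0(x_\eps)-U_0|^2\to 0$ pointwise against $|\curl H_\eps^*|^2\to|\curl K|^2$ in $L_1(\VR^3)$. Choosing $R$ large first and then $\eps$ small makes each of the three pieces arbitrarily small, which completes the proof.
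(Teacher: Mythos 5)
Your overall strategy --- subtracting \eqref{E:per_eps} from \eqref{E:approx_Keps}, testing with $w_\eps = K_\eps^* - H_\eps^*$, invoking strong monotonicity for the left-hand side, and splitting the remainder into a data term over $\omega$ plus a ``second difference'' --- matches the paper's, and your treatment of the data term and of the region where $x_\eps=\eps x$ stays inside $B_\delta(z)$ is sound. The genuine gap is exactly where you flag it: on the far region $\{|x|>\delta/\eps\}$ your only pointwise bound on the remainder is $c_3\,|\curl u_0(x_\eps)-U_0|\,|\curl H_\eps^*|$, and since the lemma assumes H\"older continuity of $\curl u_0$ only on $\overline{B_\delta(z)}$ (elsewhere it is merely $L_2$), the factor $|\curl u_0(x_\eps)-U_0|$ is uncontrolled there. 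Promoting $\curl u_0$ to $L_\infty(\Dsf)^3$ is an extra hypothesis the statement does not grant (note that Main Theorem 2 imposes an $L_\infty$ assumption on $\curl p_0$ but deliberately not on $\curl u_0$), so as written this step fails.

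The repair stays entirely inside your framework and is what the paper does: in the far region, do not collapse $I_1+I_2$ into a second difference. Regroup and bound each first difference separately via the global Lipschitz continuity of $a_i$ (Assumption~\ref{A:nonlinearity}(ii)),
\[
\bigl|\Ca_\omega(x,\curl H_\eps^*+\curl u_0(x_\eps))-\Ca_\omega(x,\curl u_0(x_\eps))\bigr|
+\bigl|\Ca_\omega(x,\curl H_\eps^*+U_0)-\Ca_\omega(x,U_0)\bigr|
\le 2c_2\,|\curl H_\eps^*|,
\]
which requires no information on $\curl u_0$ whatsoever; the far-region contribution is then bounded by $2c_2\|\curl H_\eps^*\|_{L_2(\VR^3\setminus B_{\delta/\eps})^3}\|\curl w_\eps\|_{L_2(\VR^3)^3}$, and the first factor tends to zero by Lemma~\ref{L:Heps_K} together with the tail decay of $\curl K$ --- exactly the mechanism you already use on your annulus. (The paper cuts at radius $\eps^{-r}$ with $r$ small instead of your fixed $R$, which lets every piece vanish without the diagonal ``first $R$ large, then $\eps$ small'' step, but your diagonal argument is equally valid for the near and intermediate regions.) A minor further remark: your second-difference estimate invokes Assumption~\ref{A:nonlinearity}(iii), whereas the paper's proof of this lemma gets by with (i) and (ii) alone.
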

    \begin{proof}
        Subtracting \eqref{E:per_eps} and \eqref{E:approx_Keps} we obtain
\ben
\begin{split}
    \int_{\VR^3} &  (\Ca_\omega(x,\curl K_\eps^*  + \curl u_0(x_\eps) ) - \Ca_\omega(x,\curl H_\eps^* + U_0 ))\cdot \curl \varphi \; dx \\
                 & + \int_{\VR^3}   (\Ca_\omega(x,U_0 ) - \Ca_\omega(x,\curl u_0(x_\eps) ))\cdot \curl \varphi \; dx \\
                &  = - \int_{\omega}(a_1 (\curl u_0 (x_\eps) ) - a_2 (\curl u_0(x_\eps) )) \cdot \curl \varphi  \blue-\black (a_1 (U_0 ) - a_2 (U_0 )) \cdot \curl \varphi \;dx
\end{split}
\een
for all $\varphi \in H^1_0(\eps^{-1} \Dsf)$ where we recall the notation $x_\eps = \eps x$. We want to use the monotonicity of $\Ca_\omega$ and therefore we rewrite the previous equation as follows
\ben\label{E:diff_Heps_Keps}
\begin{split}
    \int_{\VR^3} &  (\Ca_\omega(x,\curl K_\eps^*  + \curl u_0(x_\eps) ) - \Ca_\omega(x,\curl H_\eps^* + \curl u_0(x_\eps) ) ))\cdot \curl \varphi \; dx \\
    = &  \underbrace{  - \int_{\VR^3}  ( (\Ca_\omega(x,\curl H_\eps^* + \curl u_0(x_\eps) ) - (\Ca_\omega(x,\curl H_\eps^* + U_0 )\cdot \curl \varphi \; dx}_{=:I_1(\eps,\varphi)} \\
                 & \underbrace{- \int_{\VR^3}   (\Ca_\omega(x,U_0 ) - \Ca_\omega(x,\curl u_0(x_\eps) ))\cdot \curl \varphi \; dx}_{=:I_2(\eps,\varphi)} \\
                 &  \underbrace{ - \int_{\omega}(a_1 (\curl u_0 (x_\eps) ) - a_2 (\curl u_0(x_\eps) )) \cdot \curl \varphi \;dx  \blue - \black (a_1 (U_0 ) - a_2 (U_0 )) \cdot \curl \varphi \;dx}_{=:I_3(\eps,\varphi)}
\end{split}
\een
Now the $a_i$ are Lipschitz continuous and $\curl u_0 \in C^{\alpha}(\overbar{B_\delta(z)})^3$ with $\alpha,\delta>0$, we immediately obtain that $|I_3(\eps,\varphi)| \le C\eps^\alpha \|\curl \varphi\|_{L_2(\VR^3)^3} $ for a suitable constant $C>0$. We now show that also $|I_1(\eps,\varphi)+I_2(\eps,\varphi)|\le C(\eps)\|\curl \varphi\|_{L_2(\VR^3)^3}$ and $C(\eps)\to 0$ as $\eps \searrow 0$. We write for  arbitrary $r\in (0,1$),
\ben\label{E:sum_I1_I2}
\begin{split}
    I_1(\eps,\varphi)+I_2(\eps,\varphi)  = &  - \int_{B_{\eps^{-r}}}  ( (\Ca_\omega(x,\curl H_\eps^* + \curl u_0(x_\eps) ) - (\Ca_\omega(x,\curl H_\eps^* + U_0 )\cdot \curl \varphi \; dx \\
                                           & - \int_{B_{\eps^{-r}}}   (\Ca_\omega(x,U_0 ) - \Ca_\omega(x,\curl u_0(x_\eps) ))\cdot \curl \varphi \; dx \\
                                           &  - \int_{\VR^3\setminus B_{\eps^{-r}}}  ( (\Ca_\omega(x,\curl H_\eps^* + \curl u_0(x_\eps) ) - (\Ca_\omega(x, \curl u_0(x_\eps) )\cdot \curl \varphi \; dx \\
                                           &  + \int_{\VR^3\setminus B_{\eps^{-r}}}   ((\Ca_\omega(x,\curl H_\eps^* + U_0 ) - \Ca_\omega(x, U_0 ))\cdot \curl \varphi \; dx.
\end{split}
\een
Now we can estimate the right hand side of \eqref{E:sum_I1_I2} using the Lipschitz continuity of $a_i$ (see Assumption~\ref{A:nonlinearity}(ii)) as follows
\ben
\begin{split}
    |I_1(\eps,\varphi) + I_2(\eps,\varphi)| & \le  2C\int_{B_{\eps^{-r}}} \blue |U_0-\curl u_0(x_\eps) | |\curl \varphi | \black \;dx   
                                                   + 2C\int_{\VR^3\setminus B_{\eps^{-r}}} \blue |\curl H_\eps^* | \, |\curl \varphi| \black\;dx \\
                                        &\leq C \int_{B_{\eps^{-r}}} \blue |x_\eps|^\alpha |\curl \varphi| \black \mbox dx           + 2C\int_{\VR^3\setminus B_{\eps^{-r}}}\blue |\curl H_\eps^* |\,|\curl \varphi| \black \;dx \\                                        
                                                 & \le \eps^{-r \alpha} \eps^\alpha {\color{blue}\eps^{-3r/2} } C  \|\curl \varphi\|_{L_2(\VR^3)^3} + 2C \|\curl H_\eps^*\|_{L_2(\VR^3\setminus B_{\eps^{-r}})^3}\|\curl \varphi\|_{L_2(\VR^3\setminus B_{\eps^r})^3}
\end{split}
\een
For $r$ sufficiently close to $0$, we have $\eps^{-r\alpha} \eps^\alpha {\color{blue}\eps^{-3r/2} = \eps^{\alpha - r(\frac{3}{2} +\alpha)} } \to 0$. Moreover, by the triangle inequality we have 
\ben
\|\curl H_\eps^* \|_{L_2(\VR^3\setminus B_{\eps^{-r}})} \le \|\curl (H_\eps^*- K) \|_{L_2(\VR^3\setminus B_{\eps^{-r}})} + \|\curl K \|_{L_2(\VR^3\setminus B_{\eps^{-r}})}.
\een
The first term on the right hand side goes to zero in view of  Lemma~\ref{L:Heps_K}. The second
term goes to zero since $\curl K\in L_2(\VR^3)^3$ thus $\|\curl K\|_{L_2(\VR^3\setminus B_{\eps^{-r}})^3} \to 0$ as $\eps \searrow 0$. Using $K_\eps^*  - H_\eps^*$ as test function in \eqref{E:diff_Heps_Keps}, using the monotonicity of $\Ca_\omega$ and employing $|I_1(\eps,\varphi)+I_2(\eps,\varphi)+I_3(\eps,\varphi)| \le C(\eps)\|\curl \varphi\|_{L_2(\VR^3)^3}$ with $C(\eps) \to 0$ as $\eps \searrow 0$, shows the result.
    \end{proof}
    Combining Lemma~\ref{L:Heps_K} and Lemma~\ref{L:Heps_Keps} proves the Main Theorem~\ref{maintheorem_1}(ii). \hfill $\blacksquare$

    \section{The topological derivative}\label{sec:topological_derivative}
In this section we show that the hypotheses of Theorem~\ref{thm:diff_lagrange} are satisfied for the Lagrangian $G$ given by \eqref{eq:lagrange_scale_eps}.

Let $\ell(\eps):= |\omega_\eps|$, and introduce the Lagrangian $ G:[0,\tau]\times H_0(\Dsf,\curl) \times H_0(\Dsf,\curl) \to \VR$ associated with the perturbation $\omega_\eps$ by 
\ben\label{eq:lagrange_scale_eps}
G(\eps, u, p) :=  \int_{\blue \Omega_g\black} \blue | \curl(u) - B_d|^2 \black \; dx +  \int_{\Dsf} \Ca_{\Omega_\eps}(x, \curl u) \cdot \curl p  \; dx - \langle F,p\rangle.
\een
Here, the operator $\Ca_{\Omega_\eps}$ is defined according to \eqref{eq_AOmega} with $\Omega_\eps = \Omega \cup \omega_\eps$. It is clear from Assumption~\ref{A:nonlinearity} that the Lagrangian $G$ is $\ell$-differentiable in the sense of Definition~\ref{D:l_differentiable} with $X=Y=V(\Dsf)$ and $\ell(\eps):= |\omega_\eps|$.

\begin{maintheorem}
    Let Assumption~\ref{A:nonlinearity} be satisfied. Let $\Omega \subset D$ open and $u_0$ the solution to \eqref{E:weakformulation} and $p_0$ the solution to \eqref{eq_adjoint}. Let $z \in  \Dsf \setminus \overbar \Omega$, such that $z\not\in (\Omega_1\cup\Omega_2\cup \Omega_g)$. Further assume that $\curl u_0\in C^{\alpha}(\overline{B_\delta(z)})^3$ for some $\delta >0$ and $0 <\alpha<1$ 
    and also $\curl p_0\in C(\overline{B_\delta(z)})^3\cap L_\infty(\Dsf)^3$. 
    \begin{itemize}
    \item[(a)] Then the assumptions of Theorem~\ref{thm:diff_lagrange} are satisfied for the Lagrangian $G$ given by \eqref{eq:lagrange_scale_eps} and hence the topological derivative at $z\in \Dsf \setminus \overbar \Omega$ is given by 
\ben \label{eq_td}
dJ(\Omega)(z) = \partial_{\ell} G(0, u_0, p_0) + R_1 ( u_0, p_0) + R_2(u_0,p_0) 
  \een
\item[(b)] 
We have 
\ben \label{eq_dlG}
\partial_{\ell} G(0, u_0, p_0) = ((a_1 (U_0) - a_2 (U_0)) \cdot P_0
\een
and 
\ben\label{E:R_term}
\begin{split}
    R_1( u_0, p_0)   = \frac{1}{|\omega|} \bigg(\int_{\VR^3} 
    \big[\Ca_\omega(x, \curl K+U_0) - \Ca_\omega(x,U_0) - \partial_{u} \Ca_\omega(x,U_0 )& (\curl K) 
               \big] 
           \cdot P_0\;dx\bigg)
\end{split}
\een
and 
\ben
R_2(u_0,p_0)=          \frac{1}{|\omega|}\int_{\omega} \left[\partial_{u} a_1 (U_0) - \partial_{u} a_2 (U_0) \right] (\curl K) \cdot P_0   \;dx 
\een
where $U_0 := \curl u_0(z)$, $P_0:= \curl p_0(z)$ and $\Ca_\omega(x,y):= a_1(y)\chi_\omega(x) + a_2(y)\chi_{\VR^3\setminus \omega}(x)$,
and $K$ is the unique solution to \eqref{E:limit_equation_W} and $p_0\in V(\Dsf)$ solves
\ben \label{eq_adjoint}
\int_\Dsf \partial_{u}\Ca_\Omega(x,\curl u_0)(\curl \varphi)\cdot \curl p_0 \;dx = -  \int_\Dsf 2(\curl u_0- B_d)\cdot \curl \varphi\;dx
\een
\end{itemize}
\end{maintheorem}

\begin{remark} \label{rem_z}
    \begin{itemize}
        \item We restrict ourselves to the case where $z \in D \setminus \overbar \Omega$ without loss of generality. However, the exact same proof can be conducted in the case where $z \in  \Omega$ and $z\not\in (\Omega_1\cup\Omega_2\cup\Omega_g)$. In that case, the formula for the topological derivative is obtained by just switching the roles of $a_1$ and $a_2$ in the theorem above (in particular also in the definition of $\Ca_\omega$).

        \item Also the case where $z\in \Omega_1\cup\Omega_2\cup\Omega_g$ can be dealt with in a similar manner. 
Indeed the derivation of \cite{a_GanglSturm_2019a} shows that for instance if $z\in \Omega_g$ an 
additional term $\int_{\VR^3}|\nabla K|^2\;dx $ in $dJ(\Omega)(z)$ appears. The case $z\in \Omega_1$ and/or $z\in \Omega_2$ have to be treated separately since in this case the right hand side 
 $F$ becomes domain dependent.

    \item The assumption $z=0$ is without loss of generality, too. In the general case, this situation can be obtained by a simple change of the coordinate system.
\end{itemize}
\end{remark}

\subsection{Computation of $R_1(u_0,p_0)$ and $R_2(u_0,p_0)$}

It remains to check that the limits of $R_1(u_0,p_0)$ and $R_2(u_0,p_0)$ exist. For this we use Assumption~\ref{A:nonlinearity}(i)-(iii). Using the change of variables $T_\eps(x) = \eps x $ and the definition $\ell(\eps)=|\omega_\eps|=\eps^3|\omega|$, we have
\ben
\begin{split}
    R_1^\eps (u_0,p_0) & =\frac{1}{\ell(\eps)} \int_0^1 \int_{\Dsf} \left(\partial_u \Ca_{\eps}(x, \curl  (su_\eps  + (1-s)u_0)) - \partial_u \Ca_{\eps}(x, \curl  u_0 ) \right)(\curl  (u_\eps-u_0))  \cdot \curl  p_0\;dx\;  ds \\
                       &  + \frac{1}{\ell(\eps)} \int_{\Omega_g} |\curl  (u_\eps  - u_0)|^2 \; dx\\
                       & =  \underbrace{\frac{1}{|\omega|} \int_0^1 \int_{\VR^3} \left(\partial_u \Ca_{\omega}(x,  s\curl  K_\eps^*    + \curl  u_0(x_\eps) ) - \partial_u \Ca_{\omega}(x, \curl  u_0(x_\eps) ) \right)(\curl  K_\eps^* )  \cdot \curl  p_0 (x_\eps)\;dx\;  ds}_{=:I_\eps} \\
                   &  + \underbrace{\frac{1}{|\omega|} \int_{\eps^{-1}\Omega_g} |\curl  K_\eps^*  |^2 \; dx}_{=:II_\eps}\\
                   & \to \frac{1}{|\omega|} \int_0^1 \int_{\VR^3} \left(\partial_u \Ca_{\omega}(x,  s\curl  K   + U_0) - \partial_u \Ca_{\omega}(x, U_0 ) \right)(\curl  K )  \cdot P_0\;dx\;  ds.
\end{split}
\een
Since  $\curl  K_\eps^*  \to \curl  K$ strongly in $L_2(\VR^3)^3$ as $\eps\searrow 0$ 
and since $\eps^{-1}\Omega_g$ goes to "infinity" because $z\not\in \Omega_g$ it readily follows that
$II_\eps \to 0$ as $\eps\searrow 0$. To see the convergence of the first term, we may write $I_\eps$as follows
\begin{align*}
     \int_0^1 \int_{\VR^3} (\partial_u & \Ca_{\omega}(x,  s\curl  K_\eps^*     + \curl  u_0(x_\eps) ) - \partial_u \Ca_{\omega}(x, \curl  u_0(x_\eps) ) )(\curl  K_\eps^* )  \cdot \curl  p_0 (x_\eps)\;dx ds = \\
                & \int_0^1 \int_{\VR^3}(\partial_u \Ca_{\omega}(x,  s\curl  K_\eps^*    + \curl  u_0(x_\eps) ) - \partial_u \Ca_{\omega}(x,  s\curl  K   + \curl  u_0(x_\eps) ) )(\curl  K_\eps^* )  \cdot \curl  p_0 (x_\eps) \;dx ds \\
                &+ \int_0^1 \int_{\VR^3}(\partial_u \Ca_{\omega}(x,  s\curl  K   + \curl  u_0(x_\eps) ) - \partial_u \Ca_{\omega}(x, \curl  u_0(x_\eps) ) )(\curl  (K_\eps^* -K))  \cdot \curl  p_0 (x_\eps) \;dx ds\\
                                                          & +\int_0^1 \int_{\VR^3}(\partial_u \Ca_{\omega}(x,  s\curl  K   + \curl  u_0(x_\eps) ) - \partial_u \Ca_{\omega}(x, \curl  u_0(x_\eps) ) )(\curl  K)  \cdot \curl  p_0 (x_\eps)\;dx ds.
\end{align*}
Using Assumption~\ref{A:nonlinearity}(iii) and $\curl  p_0 \in L^\infty(\Dsf)^3$, we see that the 
absolute value of the first and second term on the right hand side can be bounded by $C\|\curl (K_\eps^* -K)\|_{L_2(\VR^3)^3}\|\curl  K\|_{L_2(\VR^3)^3}$ and hence using $\curl  K_\eps^*  \to \curl  K$ in $L_2(\VR^3)^3$ as $\eps\searrow 0$ they disappear in the limit. The last term converges to the desired limit by using Lebesgue's dominated convergence theorem. 
Using the fundamental theorem, we obtain the expression in \eqref{E:R_term}. Similarly, using \eqref{rem_aiii}, the continuity of $\curl  u_0$ and $\curl  p_0$ at $z$, the continuity of $\partial_ua_1,\partial_ua_2$, and again $\curl  K_\eps^*  \to \curl  K$ strongly in $L_2(\VR^3)^3$, we obtain by Lebesgue's dominated convergence theorem
\ben
\begin{split}
    R_2^\eps (u,p) & = \frac{1}{\ell(\eps)} \int_{\omega_\eps } (\partial_u a_1(\curl  u_0) - \partial_u a_1(\curl  u_0))(\curl  (u_\eps-u_0))  \cdot \curl  p_0\;dx\\
                   & = \frac{1}{|\omega|}\int_{\omega} (\partial_u a_1(\curl  u_0(x_\eps)) - \partial_u a_2(\curl  u_0(x_\eps)))(\curl  K_\eps^* )  \cdot \curl  p_0(x_\eps)\;dx \\
                   & \to \frac{1}{|\omega|}\int_{\omega} (\partial_u a_1(U_0) - \partial_u a_2(U_0))(\curl  K)  \cdot P_0\;dx.
\end{split}
\een

Therefore all Hypotheses of Theorem~\ref{thm:diff_lagrange} are satisfied. This finishes the 
proof of our Main Theorem 2.

\section{Numerical realization}\label{sec:numerics}
Formula \eqref{eq_td} together with \eqref{eq_dlG}--\eqref{eq_adjoint} states the topological derivative for problem \eqref{defJ}--\eqref{E:weakformulation} at a single spatial point $z$. Note that the evaluation of the topological derivative involves the solution of problem \eqref{E:limit_equation_W}, which in turn depends on the point $z$ via the vector $U_0 = \curl(u_0)(z)$. When using the topological derivative \eqref{eq_td} in a numerical optimization algorithm, it has to be evaluated at every point in the design area in every iteration of the algorithm. Therefore, a direct evaluation of \eqref{eq_td} is unfeasible and an efficient technique for numerical approximation is indispensable. In this section, we show a way to approximately evaluate formula \eqref{eq_td} by first precomputing certain values in an offline phase and looking them up and interpolating them during the online phase of the optimization algorithm. We proceed in an analogous way to \cite[Sec. 7]{AmstutzGangl2019}.

For this, we need the following additional assumption:
\begin{assumption} \label{assump_a_R} 
    \begin{enumerate}
     \item[(i)] For all orthogonal matrices $R \in \VR^{3\times3}$ and all $y \in \VR^3$, it holds that 
     \ben
        a_i(R y ) = R \, a_i(y) \quad \mbox{for } i=1,2.
    \een
     \item[(ii)] The inclusion is the unit ball: $\omega = B_1(0)$.
    \end{enumerate}
\end{assumption}

We will show a concrete application that satisfies this assumption in Section \ref{sec:machine}.
We note that the topological derivative \eqref{eq_td} depends on the spatial point $z$ only via $U_0$, $P_0$ and $K=K_{U_0}$. Let us make this dependence more clear by introducing the notation
\begin{align}
    dJ(\Omega)&(U_0, P_0) := ((a_1 (U_0) - a_2 (U_0)) \cdot P_0  \label{dJOmega_U0P0_1}\\
&+ \frac{1}{|\omega|} \bigg(\int_{\VR^3} 
    \big[\Ca_\omega(x, \curl K_{U_0}+U_0) - \Ca_\omega(x,U_0) - \partial_{u} \Ca_\omega(x,U_0 ) (\curl K_{U_0}) 
               \big] 
           \cdot P_0\;dx\bigg)\label{dJOmega_U0P0_2}\\
&+ \frac{1}{|\omega|}\int_{\omega} \left[\partial_{u} a_1 (U_0) - \partial_{u} a_2 (U_0) \right] (\curl K_{U_0}) \cdot P_0   \;dx .\label{dJOmega_U0P0_3}
\end{align}

\begin{remark}
Recall that $e_i$, $i=1,2,3$, denotes the $i$-th unit vector in the Cartesian coordinate system in $\VR^3$. For every vector $W \in \VR^3$ there exists an orthogonal rotation matrix $R_W$ such that $W = \blue |W| \black R_W e_1$. 
\end{remark}

The next result will allow us to introduce an efficient strategy for the approximate evaluation of the topological derivative $dJ(U_0, P_0)$ for any $U_0, P_0 \in \VR^3$.

\begin{maintheorem} \label{theo_dJ_U0P0}
    Let Assumption~\ref{assump_a_R} hold and $U_0,P_0\in \VR^3$. Then it holds:
    \begin{enumerate}
     \item[(i)] the mapping $P\mapsto dJ(\Omega)(U_0, P)$ is linear on $\VR^3$,
     \item[(ii)] $dJ(\Omega)(R^\top U_0, R^\top P_0) = dJ(\Omega)(U_0, P_0)$ for all orthogonal matrices $R \in \VR^{3\times 3}$.
     \item[(iii)]  Write $U_0 = \blue |U_0| \black R_{U_0} e_1$ and $P_0 = \blue |P_0| \black R_{P_0} e_1$ for some orthogonal matrices $R_{U_0},R_{P_0}\in \VR^{3\times 3}$ and set $(c_1, c_2, c_3)^\top := \blue |P_0| \black R_{U_0}^\top R_{P_0} e_1$. Then we have
    \ben
    dJ(\Omega)(U_0, P_0) = c_1 dJ(\Omega)(\blue |U_0| \black e_1, e_1) + c_2 dJ(\Omega)(\blue |U_0| \black e_1, e_2) + c_3 dJ(\Omega)(\blue |U_0| \black e_1, e_3).
    \een
    \end{enumerate}
\end{maintheorem}

\begin{corollary} \label{cor_dJ_U0P0} Let Assumption \ref{assump_a_R} hold. Suppose that the values $dJ(\Omega)(t e_1, e_i)$, $i=1,2,3$ are given for all $t \in [t_{min}, t_{max}]$ with $0\leq t_{min} < t_{max}$. Then, for all $U_0 \in \VR^3$ with $t_{min} \leq \blue |U_0| \black \leq t_{max}$ and all $P_0 \in \VR^3$ it holds
    \ben
    dJ(\Omega)(U_0, P_0) = c_1 dJ(\Omega)(\blue |U_0| \black e_1, e_1) + c_2 dJ(\Omega)(\blue |U_0| \black e_1, e_2) + c_3 dJ(\Omega)(\blue |U_0 | \black e_1, e_3). \label{eq_dJ_U_P}
    \een
with $(c_1, c_2, c_3)^\top = \blue|P_0| \black  R_{U_0}^\top R_{P_0} e_1$.
\end{corollary}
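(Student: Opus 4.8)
I would organize the proof around a single equivariance property of the cell solution $K_{U_0}$ under orthogonal substitutions; parts (i) and (iii) then follow quickly. Part (i) is immediate: the field $K_{U_0}$ defined through \eqref{E:limit_equation_W} does not involve $P_0$ at all, and in each of the three contributions \eqref{dJOmega_U0P0_1}--\eqref{dJOmega_U0P0_3} the vector $P_0$ enters only through a single terminal Euclidean inner product with a factor independent of $P_0$. Hence $P\mapsto dJ(\Omega)(U_0,P)$ is a sum of linear functionals, and is therefore linear.

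The heart of the argument is the transformation rule
\ben\label{E:equivariance}
\curl K_{R^\top U_0}(x) = R^\top\,(\curl K_{U_0})(Rx)\qquad\text{for a.e. }x\in\VR^3,\ R\in\VR^{3\times3}\text{ orthogonal}.
\een
To establish \eqref{E:equivariance} I would verify that $\tilde K(x):=\det(R)\,R^\top K_{U_0}(Rx)$, which again lies in $BLC(\VR^3)$ since the map $v\mapsto R^\top v(R\cdot)$ preserves the divergence-free constraint and the finiteness of the curl-seminorm, solves \eqref{E:limit_equation_W} with datum $R^\top U_0$; because $\curl\tilde K(x)=R^\top(\curl K_{U_0})(Rx)$, the uniqueness granted by Main Theorem~\ref{maintheorem_1}(i) then yields \eqref{E:equivariance}. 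The three structural facts needed are: (a) Assumption~\ref{assump_a_R}(i) gives $a_i(R^\top z)=R^\top a_i(z)$, whence $\Ca_\omega(x,R^\top z)=R^\top\Ca_\omega(x,z)$ for every vector $z$, while Assumption~\ref{assump_a_R}(ii) makes $\omega=B_1(0)$ invariant under $R$, so that the spatial substitution $y=Rx$ leaves the indicators inside $\Ca_\omega$ unchanged; (b) the curl identity $\curl(R^\top w(R\cdot))(x)=\det(R)\,R^\top(\curl w)(Rx)$; and (c) the fact that $\varphi\mapsto R\,\varphi(R^\top\cdot)$ is an isometric isomorphism of $BLC(\VR^3)$, so testing against all such images is equivalent to testing against all $\varphi$. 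Substituting the ansatz into the weak form, pulling the orthogonal matrices out of $\Ca_\omega$ via (a) and out of the test curl via (b), and changing variables $y=Rx$, one finds that both the left-hand side and the source on the right acquire a common factor $\det(R)$ which cancels, reducing the identity to \eqref{E:limit_equation_W} tested with $\psi=R\,\varphi(R^\top\cdot)$. This $\det(R)$ bookkeeping — confirming that the sign from (b) cancels the one produced by the test-function substitution so that \eqref{E:equivariance} is clean at the level of $\curl K$ — is the step I expect to be the main obstacle.

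Granting \eqref{E:equivariance}, part (ii) follows by transforming the three terms one at a time. Writing $\tilde U_0:=R^\top U_0$ and $\tilde P_0:=R^\top P_0$, the equivariance of $a_i$ and its derivative give $a_i(\tilde U_0)=R^\top a_i(U_0)$ and $\partial a_i(\tilde U_0)=R^\top\partial a_i(U_0)R$, and likewise $\Ca_\omega(x,R^\top z)=R^\top\Ca_\omega(x,z)$ and $\partial_u\Ca_\omega(x,R^\top z)=R^\top\partial_u\Ca_\omega(x,z)R$ for any $z$. Inserting these identities together with \eqref{E:equivariance} into \eqref{dJOmega_U0P0_1}--\eqref{dJOmega_U0P0_3}, each integrand bracket becomes $R^\top[\cdots]$ with internal argument $\curl K_{U_0}(Rx)$; pairing with $\tilde P_0=R^\top P_0$ converts $R^\top(\cdots)\cdot R^\top P_0$ into $(\cdots)\cdot P_0$ by orthogonality, and the substitution $y=Rx$ — under which $\omega$ is invariant, so the spatial indicators in $\Ca_\omega$ are untouched — restores exactly the three terms defining $dJ(\Omega)(U_0,P_0)$. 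Hence $dJ(\Omega)(R^\top U_0,R^\top P_0)=dJ(\Omega)(U_0,P_0)$.

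Finally, for part (iii) I would apply (ii) with the specific choice $R=R_{U_0}$, so that $R_{U_0}^\top U_0=|U_0|\,e_1$ and $R_{U_0}^\top P_0=|P_0|\,R_{U_0}^\top R_{P_0}e_1=(c_1,c_2,c_3)^\top=c_1e_1+c_2e_2+c_3e_3$. Thus $dJ(\Omega)(U_0,P_0)=dJ(\Omega)(|U_0|\,e_1,\,c_1e_1+c_2e_2+c_3e_3)$, and expanding the second slot by the linearity of part (i) yields $c_1\,dJ(\Omega)(|U_0|e_1,e_1)+c_2\,dJ(\Omega)(|U_0|e_1,e_2)+c_3\,dJ(\Omega)(|U_0|e_1,e_3)$, which is the claimed representation and completes the proof.
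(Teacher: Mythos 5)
Your proposal is correct and follows essentially the same route as the paper: the equivariance of the cell solution (the paper's Lemma~\ref{lem_K_rot}, proved by the same change of variables plus uniqueness argument) combined with parts (i)--(iii) of Main Theorem~\ref{theo_dJ_U0P0}, from which the corollary is immediate. Your explicit $\det(R)$ bookkeeping in the ansatz $\tilde K(x)=\det(R)\,R^\top K_{U_0}(Rx)$ is a minor refinement of the paper's computation (which simply takes $\det(R)=1$), not a different approach.
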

We first prove the following properties of the solution mapping $W\mapsto K_W$, where 
$K_W$ denotes the unique solutions in $\dot{BLC}(\VR^3)$ to \eqref{E:limit_equation_W} with $U_0$ being replaced by $W\in \VR^3$.

\begin{lemma} \label{lem_K_rot}
Let Assumption~\ref{assump_a_R} hold. Let $W \in \VR^3$, $R \in \VR^{3\times 3}$ orthogonal.
Then the following relations hold:
    \begin{align}
        K_{R^\top W}(x) &= R^\top K_W(Rx) + \nabla \eta, \\
        \curl (K_{R^\top W})(x) &= R^\top (\curl (K_W))(Rx). \label{eq_curlK_rot}
    \end{align}
\end{lemma}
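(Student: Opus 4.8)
The plan is to exhibit an explicit rotated candidate and then invoke uniqueness of the solution to \eqref{E:limit_equation_W}. Set $\tilde K(x) := R^\top K_W(Rx)$. The goal is to show that $\tilde K$ solves the defining problem \eqref{E:limit_equation_W} with $W$ replaced by $R^\top W$; by the uniqueness of that solution in $\dot{BLC}(\VR^3)$ (the Zarantonello argument used in Main Theorem~\ref{maintheorem_1}(i)) this forces $\tilde K$ and $K_{R^\top W}$ to coincide as elements of $\dot{BLC}(\VR^3)$, i.e. to agree modulo a gradient, which is exactly the first asserted identity. Taking the curl then removes $\nabla\eta$ and yields \eqref{eq_curlK_rot}.

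The one genuinely computational input is the transformation law for the curl under an orthogonal change of variables: for a field $v$ and orthogonal $R$, the field $u(x) = R^\top v(Rx)$ satisfies $\curl u(x) = \det(R)\,R^\top (\curl v)(Rx)$. This follows from the chain rule together with the standard Levi--Civita contraction identity for $R$, and it is the reason $\curl$ behaves like a pseudovector. Applying it with $v = K_W$ gives $\curl \tilde K(x) = \det(R)\,R^\top(\curl K_W)(Rx)$. In the situation of interest the rotation matrices $R$ lie in $SO(3)$, so $\det(R)=1$ and the factor is harmless; I would carry out the verification in this case. (For a reflection one must additionally use that Assumption~\ref{assump_a_R}(i), applied with $R=-I$, forces each $a_i$ to be odd, and track the sign accordingly.)

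For the verification itself I would substitute $x = R^\top y$ (measure preserving, since $|\det R|=1$) in the $R^\top W$-equation tested against an arbitrary $\varphi\in BLC(\VR^3)$, and let three structural facts collapse everything onto \eqref{E:limit_equation_W}. First, $\omega = B_1(0)$ is $R$-invariant by Assumption~\ref{assump_a_R}(ii), so $\chi_\omega(R^\top y)=\chi_\omega(y)$ and hence $\Ca_\omega(R^\top y,\cdot)=\Ca_\omega(y,\cdot)$. Second, the equivariance $a_i(R^\top w)=R^\top a_i(w)$ from Assumption~\ref{assump_a_R}(i) gives $\Ca_\omega(y,R^\top w)=R^\top\Ca_\omega(y,w)$; combined with the argument identity $\curl\tilde K(R^\top y)+R^\top W = R^\top(\curl K_W(y)+W)$ this pulls a factor $R^\top$ outside the nonlinearity. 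Third, I would transform the test function by setting $\psi(y):=R\varphi(R^\top y)$, which is again divergence-free and in $BLC(\VR^3)$ with $\curl\varphi(R^\top y)=R^\top\curl\psi(y)$ (the curl law again). The pairing in the integrand then has the form $R^\top(\cdots)\cdot R^\top\curl\psi$, and orthogonality $R^\top R = I$ reduces it to $(\cdots)\cdot\curl\psi$, so the left-hand side becomes precisely the left-hand side of \eqref{E:limit_equation_W} for $K_W$ tested with $\psi$. The identical substitution turns the right-hand side of the $R^\top W$-equation into the right-hand side of \eqref{E:limit_equation_W} tested with $\psi$, using $R\omega=\omega$. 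Since $\varphi\mapsto\psi$ is a norm-preserving bijection of $BLC(\VR^3)$, $\tilde K$ solves the $R^\top W$-problem.

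The main obstacle is keeping the curl transformation law honest: it is the $\det R$ factor that dictates why the clean, sign-free identity \eqref{eq_curlK_rot} is true for rotations but requires the oddness of $a_i$ for orientation-reversing maps. Everything else—the measure-preserving change of variables, invariance of the unit ball, equivariance of $\Ca_\omega$, and the cancellation of $R^\top$ against $R$—is routine bookkeeping once the correct companion test function $\psi(y)=R\varphi(R^\top y)$ has been identified, since its whole purpose is to make testing the transformed equation identical to testing \eqref{E:limit_equation_W}.
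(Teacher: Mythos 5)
Your proposal is correct and follows essentially the same route as the paper's proof: define the rotated candidate $\tilde K(x)=R^\top K_W(Rx)$, change variables via $y=Rx$, use the curl transformation law, the $R$-invariance of $\omega=B_1(0)$ and the equivariance $a_i(R^\top w)=R^\top a_i(w)$ to show $\tilde K$ solves the $R^\top W$-problem, and conclude by uniqueness in $\dot{BLC}(\VR^3)$. You are in fact slightly more careful than the paper about the $\det(R)$ factor: the paper's proof simply invokes $\det(R)=1$ even though the lemma is stated for general orthogonal $R$, whereas you flag that orientation-reversing $R$ would require the additional oddness of $a_i$ (which Assumption~\ref{assump_a_R}(i) with $R=-I$ supplies) and an accompanying sign bookkeeping.
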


\begin{proof}
    To see the first identity, we perform the change of variables $y=\phi(x) = Rx$ in \eqref{E:limit_equation_W} with $U_0$ replaced by $W$. Noting that the chain rule yields
    \ben \label{eq_trafo_curl}
        \curl_y(K) \circ \phi = R \curl_x(R^T (K\circ \phi)),
    \een
    where we used $\mbox{det}(R) = 1$, we get for \eqref{E:limit_equation_W}
    \begin{align*}
        \int_{\VR^3} \bigg( \Ca_{\phi^{-1}(\omega)} (x, R \curl_x (\tilde K) + W) &- \Ca_{\phi^{-1}(\omega)} (x, W) \bigg) \cdot R \curl_x(\tilde \varphi) =\\
        &-\int_{\phi^{-1}(\omega)}\bigg( a_1(W)-a_2(W) \bigg) \cdot R \curl_x(\tilde \varphi).
    \end{align*}    
    Here we used the notation $\tilde K = R^\top (K_W\circ \phi)$ and $\tilde \varphi = R^\top (\varphi \circ \phi)$. Using Assumption~\ref{assump_a_R}, this can be rewritten as
    \begin{align*}
        \int_{\VR^3} \bigg( \Ca_{\omega} (x, \curl_x (\tilde K) + R^\top W) &- \Ca_{\omega} (x, R^\top W) \bigg) \cdot  \curl_x(\tilde \varphi) =
        -\int_{\omega}\bigg( a_1(R^\top W)-a_2(R^\top W) \bigg) \cdot  \curl_x(\tilde \varphi).
    \end{align*}  
    Since $K_{R^\top W}$ is the unique solution in $\dot{BLC}(\VR^3)$ to the problem above, we conclude that $R^\top (K_W\circ \phi) = \tilde K = K_{R^\top W}$ in $\dot{BLC}(\VR^3)$. Finally this relation together with \eqref{eq_trafo_curl} yields
    \ben
        \curl_x(K_{R^\top W}) = \curl_x(R^\top (K_W \circ \phi)) = R^\top \curl_y(K_W)\circ \phi.
    \een
\end{proof}

\begin{proof}[Proof of Main Theorem~\ref{theo_dJ_U0P0}]
    The first statement can be seen directly from \eqref{dJOmega_U0P0_1}--\eqref{dJOmega_U0P0_3}. The second result follows immediately by Assumption~\ref{assump_a_R} using \eqref{eq_curlK_rot} noting that Assumption B(i) implies $\partial_u a_i(R y )(R z) = R \partial_u a_i(y )(z)$ for $R \in \VR^{3\times 3}$ orthogonal and $y,z \in \VR^3$ and $i=1,2$.

    Using the representations $U_0 = \blue |U_0|\black  R_{U_0} e_1$ and $P_0 = \blue |P_0| \black  R_{P_0} e_1$ and item (ii), we get
\begin{align}
    dJ(\Omega)(U_0, P_0) = dJ(\Omega) (\blue |U_0| \black  R_{U_0} e_1, \blue |P_0| \black  R_{P_0} e_1) = dJ(\Omega)(\blue |U_0| \black  e_1, \blue|P_0| \black R_{U_0}^\top R_{P_0} e_1). \nonumber  
\end{align}
The result now follows from the definition $(c_1, c_2, c_3)^\top = \blue |P_0| \black  R_{U_0}^\top R_{P_0} e_1$ and the linearity of $dJ(\Omega)(\cdot, \cdot)$ in the second argument (cf. item (i)).
\end{proof}

Our proposed strategy now consists in first precomputing $dJ(\Omega)(t e_1, e_i)$, $i=1,2,3$ for a range of values of $t= \blue |U_0 | \black  = \blue | \curl u_0(z) | \black $ between a minimum value $t_{min}= 0$ and a maximum value $t_{max}$ in an offline stage. During the optimization, the values of $dJ(\Omega)(t e_1, e_i)$ for any $t \in [t_{min},t_{max}]$ can be approximated by interpolation and the topological derivative can be (approximately) evaluated with the help of Corollary~\ref{cor_dJ_U0P0}. In practical applications, often reasonable values for $t_{max}$ are known.

For the precomputation of the values $dJ(\Omega)(t e_1, e_i)$ for a fixed $t \in [t_{min}, t_{max}]$, problem \eqref{E:limit_equation_W} has to be solved with $U_0 := t e_1$. For the numerical solution of \eqref{E:limit_equation_W} recall that the solution $H_\eps$ to \eqref{E:approx_Keps} is a good approximation of $K$ for small $\eps>0$ due to Lemma \ref{L:Heps_K}. Moreover, it can be shown in an analogous way to Lemma \ref{L:Heps_K} that for $B:=B_R(0)$ with $R$ such that $B \subset \Dsf$, the solution $\tilde H_\eps \in V(\eps^{-1} B)$ to 
\ben \label{eq_HepsTilde}
\begin{split}
    \int_{\eps^{-1}B}  & (\Ca_\omega(x,\curl \tilde H_\eps + U_0 ) - \Ca_\omega(x, U_0 ))\cdot \curl \varphi \; dx\\
              & = - \int_{\omega}(a_1 (U_0) - a_2 (U_0)) \cdot \curl \varphi \;dx \quad \text{ for all } \varphi \in V(\eps^{-1} B).
\end{split}
\een
satisfies $\curl \tilde H_\eps \rightarrow \curl K$ strongly in $L_2(\VR^3)$. Motivated by this observation, one may solve \eqref{eq_HepsTilde} with $B = B_1(0)$ and a comparatively small value for $\eps$, e.g. $\eps = 1/1000$, as a good approximation to \eqref{E:limit_equation_W}.

\section{Application to electrical machines}\label{sec:machine}
In this section we show a real-world application where the setting of this paper applies. We consider the topology optimisation of an electrical machine in the setting of three-dimensional magnetostatics with nonlinear material behavior.

\subsection{Physical modeling}
\begin{figure}
    \begin{center}
        \includegraphics[width=0.8 \textwidth]{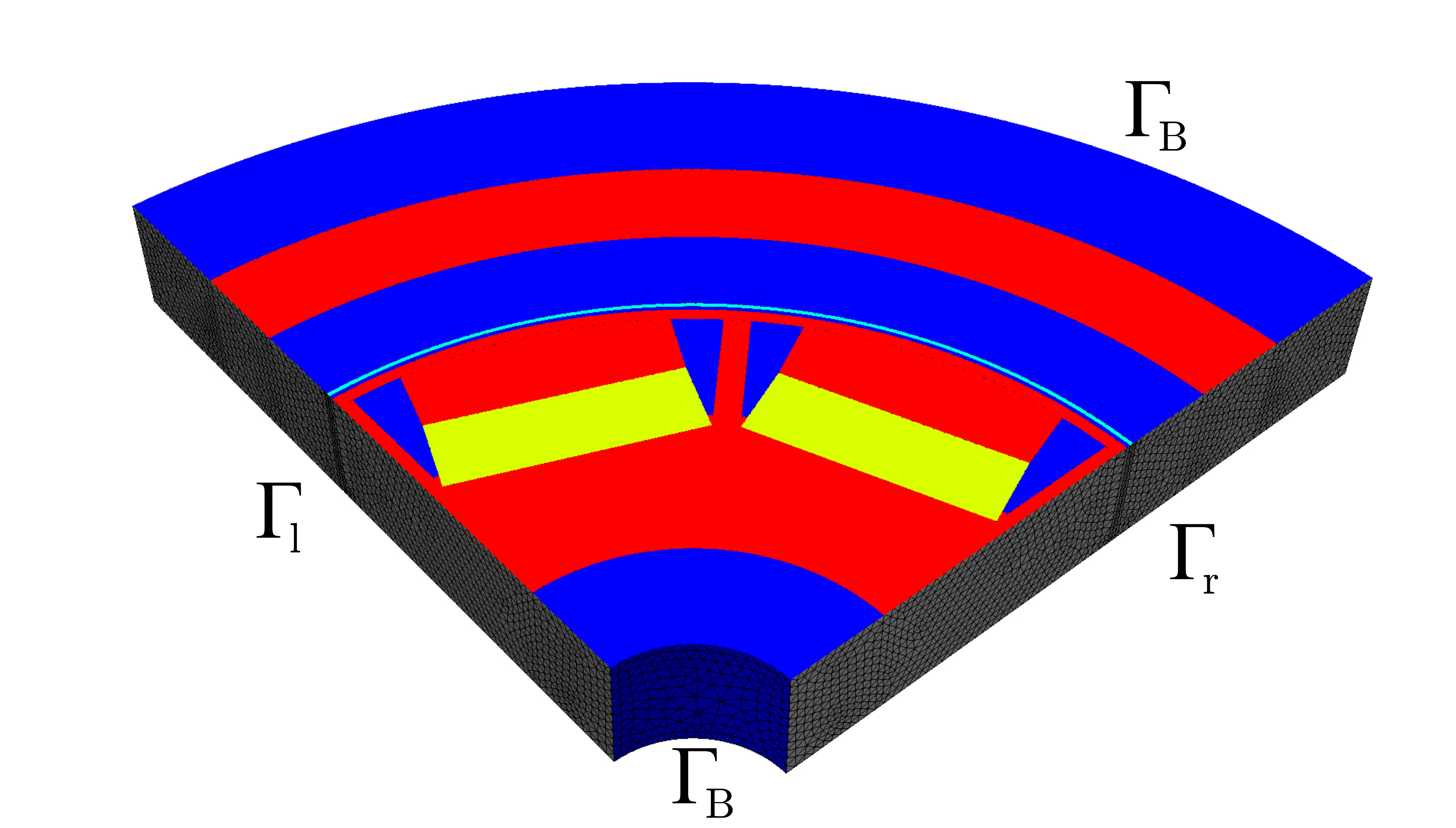}
    \end{center}
    \caption{Three-dimensional model of electrical machine $\Dsf$ with ferromagnetic subdomain $\Omega$ (red), permanent magnet regions $\Omega_2$ (yellow), air subdomain $\Dsf \setminus (\Omega \cup \Omega_2)$ (blue) with air gap region $\Omega_g$ (turquoise), lateral boundaries $\Gamma_l$, $\Gamma_r$ and inner and outer boundaries $\Gamma_B$.}
    \label{fig_motor3D}
\end{figure}

 The magnetostatic regime is a low frequency approximation to the full Maxwell equations where all quantities are assumed to be time-independent and where one only considers the magnetic equations
\ben \label{maxwellmagstat}
	\curl H = J_i \quad \mbox{ and } \quad \Div B = 0.
\een
Here, $J_i$ denotes the impressed current density, and the magnetic field intensity $H$ and the magnetic flux density $B$ are related by the nonlinear material law
\ben \label{materiallaw}
	H = \nu(\blue |B| \black )(B - M),
\een
where $\nu:\mathbb R_0^+ \rightarrow \mathbb R_0^+$ is the material-dependent magnetic relucitivity and $M$ denotes the permanent magnetization. Due to symmetry, we consider only a quarter of the machine. 
Let $\Dsf =\{(x,y,z)\in \VR^3: r_0 <\sqrt{ x^2+y^2} < r_3, -2.5 < z < 2.5 \}$ with $r_0=4$mm and $r_3=33$mm the bounded, simply connected Lipschitz domain which contains the quarter of the electric motor as depicted in Figure \ref{fig_motor3D}. We set periodic boundary conditions on the lateral boundaries $\Gamma_l$ and $\Gamma_r$, natural boundary conditions $\nu(\blue|B|\black ) B \times n = 0$ on the top and bottom and induction boundary conditions $B \cdot n = 0$ on the inner and outer parts $\Gamma_B$ of $\partial \Dsf$.

The motor consists of an inner, rotating part (the rotor) and an outer, fixed part (the stator), both containing ferromagnetic components. They are separated by a thin air gap $\Omega_g = \{(x,y,z) \in \Dsf: r_1 < \sqrt{ x^2+y^2} < r_2 \}$ with $r_1 = 19.67$mm and $r_2=19.83$mm, see the light blue area in Figure \ref{fig_motor3D}. We denote the union of all ferromagnetic subdomains by $\Omega$ which we assume to be open. The current density $J_i$ is in general supported in the coil regions $\Omega_1 \subset \Dsf \setminus \overbar \Omega$, which lie between the air gap and the stator core. The magnetization $M$ is supported in the permanent magnets $\Omega_2\subset \Dsf \setminus \overbar \Omega$. In this particular application, which was also treated in a two-dimensional setting in \cite{AmstutzGangl2019, a_GALALAMEST_2015a}, we assume the currents to be switched off, i.e., $J_i=0$ and therefore treat $\Omega_1$ as air.

The magnetic reluctivity $\nu$ is equal to a constant $\nu_0 = 10^7/(4 \pi)$ in the air and coil subdomains of the computational domain, a constant $\nu_{m}$ close to $\nu_0$ in the permanent magnet regions $\Omega_2$ and is given by a nonlinear function $\nuh$ in the ferromagnetic subdomain $\Omega$. For more compact presentation we assume $\nu_{m} = \nu_0$. Moreover, we assume that $\nuh$ has the following properties:
\begin{assumption}\label{B:nonlinearity}
	We assume that the magnetic reluctivity function $\nuh:\VR_0^+ \rightarrow \VR^+$ satisfies:
	\begin{itemize}
		
		\item[(i)] The mapping $s \mapsto \nuh(s)s$, is strongly monotone, i.e. there is a constant $\underline \nu$ such that
			\ben
				\left( \nuh(s)s - \nuh(t)t\right)(s-t) \geq \underline \nu (s-t)^2. \label{assumpA_nuMon}
			\een
		\item[(ii)] The mapping $s \mapsto \nuh(s)s$ is Lipschitz continuous, i.e. there is a constant $\overbar \nu$ such that
			\ben
				| \nuh(s)s - \nuh(t)t| \leq \overbar \nu |s-t|. \label{assumpA_nuLip}
			\een
        \item[(iii)] We assume that for $\nuh \in C^2(\VR_0^+)$, $\nuh'(0)=0$, and that there is a constant $c$ such that for all $s \in \VR_0^+$, $\nuh'(s) \leq c$ and $\nuh''(s) \leq c$.
	\end{itemize} 
\end{assumption}

The first two points of Assumption \ref{B:nonlinearity} follow from physical properties of $B-H$-curves, i.e. of the relations between magnetic flux density $B$ and magnetic field intensitiy $H$ (cf. \cite{Pechstein2004,PechsteinJuettler2006}). In practice, the function $\nuh$ is obtained by interpolation of measured values \cite{PechsteinJuettler2006}, thus the smoothness assumption in Assumption \ref{B:nonlinearity}(iii) is justified. In our numerical experiments, we chose the analytic reluctivity function
\ben
    \nuh(s) = \nu_0-(\nu_0-q_1) \, \mbox{exp}( -q_2 \, s ^{q_3}),
\een
which was also used in \cite{a_YO_2013a}, with the values $q_1=200$, $q_2=0.001$ and $q_3=6$, which satisfies all of Assumption \ref{B:nonlinearity}.

\begin{lemma} Let Assumption \ref{B:nonlinearity} hold and define $a_1(y) := \nuh(\blue|y|\black )y$ and $a_2(y) := \nu_0 y$ for $y \in \VR^3$. Then Assumption \ref{A:nonlinearity} is satisfied.
\end{lemma}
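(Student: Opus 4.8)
The plan is to reduce all three conditions of Assumption~\ref{A:nonlinearity} for the vector field $a_1(y)=\nuh(|y|)\,y$ to the scalar hypotheses of Assumption~\ref{B:nonlinearity} on the function $g(s):=\nuh(s)\,s$, while the claim for the linear map $a_2(y)=\nu_0 y$ is immediate: $a_2$ is linear with $\partial a_2\equiv\nu_0 I$, so it satisfies (i)--(iii) with $c_1=c_2=\nu_0$ and arbitrary $c_3$. The entire difficulty lies in $a_1$. First I would record that $a_1$ is continuously differentiable on $\VR^3$ with
\[ \partial a_1(y)=\nuh(|y|)\,I+\frac{\nuh'(|y|)}{|y|}\,y\otimes y \qquad (y\neq 0), \]
and $\partial a_1(0)=\nuh(0)I$; the rank-one term has magnitude $|\nuh'(|y|)|\,|y|\to 0$, so $\partial a_1$ extends continuously and $a_1$ is genuinely $C^1$ at the origin. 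As this Jacobian is symmetric, its spectrum is the radial eigenvalue $g'(|y|)=\nuh(|y|)+|y|\nuh'(|y|)$ (eigenvector $y$) and the tangential eigenvalue $\nuh(|y|)=g(|y|)/|y|$ of multiplicity two.

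For conditions (i) and (ii) this eigenvalue picture does all the work. From the strong monotonicity of $g$ in Assumption~\ref{B:nonlinearity}(i), letting $t\to s$ gives $g'(s)\ge\underline\nu$, while taking $t=0$ together with $g(0)=\nuh(0)\cdot 0=0$ gives $g(s)/s\ge\underline\nu$; hence every eigenvalue of $\partial a_1$ is $\ge\underline\nu$, i.e. $\xi^\top\partial a_1(y)\xi\ge\underline\nu|\xi|^2$ for all $y,\xi$. Integrating this quadratic form along the segment $t\mapsto y+t(x-y)$ yields (i) with $c_1=\underline\nu$. Dually, the Lipschitz bound in Assumption~\ref{B:nonlinearity}(ii) gives $|g'(s)|\le\overbar\nu$ and $g(s)/s\le\overbar\nu$, so the operator norm of $\partial a_1$ is $\le\overbar\nu$ everywhere, and the same integral representation delivers (ii) with $c_2=\overbar\nu$. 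Combining with $a_2$, the global constants are $c_1=\min(\underline\nu,\nu_0)$ and $c_2=\max(\overbar\nu,\nu_0)$.

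The main obstacle is condition (iii), i.e. showing $a_1\in C^{1,1}(\VR^3)$. Here I would differentiate the Jacobian once more and bound the resulting third-order tensor $\partial^2 a_1$ uniformly on $\VR^3\setminus\{0\}$; since $a_1$ is $C^1$ on all of $\VR^3$ and $C^2$ away from the origin, a uniform bound $\sup_{y\neq 0}|\partial^2 a_1(y)|=:c_3$ upgrades $\partial a_1$ to a globally Lipschitz map. The explicit expression for $\partial^2 a_1$ collects terms carrying $\nuh'(|y|)/|y|$, a factor $\nuh'(|y|)$ times a bounded direction field, and one purely radial term $\nuh''(|y|)\,|y|\,\widehat y\otimes\widehat y\otimes\widehat y$. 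Two regimes must be treated. Near the origin the apparent $1/|y|$ singularities are harmless precisely because $\nuh'(0)=0$: the terms with factor $\nuh'(|y|)$ vanish as $y\to0$, while $\nuh'(|y|)/|y|\to\nuh''(0)$ and $\nuh''(|y|)\,|y|\to 0$, so in fact $\partial^2 a_1\to 0$ at the origin. The delicate quantity is the radial term away from the origin, whose size is $|y|\,\nuh''(|y|)$; controlling it uniformly is exactly where the $C^2$-regularity and the derivative bounds of Assumption~\ref{B:nonlinearity}(iii) are needed, the tangential contributions being already bounded by $|\nuh'(|y|)|\le c$ and by $|y|\nuh'(|y|)=g'(|y|)-\nuh(|y|)$, which is bounded since $g'$ is bounded and $\nuh(|y|)=g(|y|)/|y|\in[\underline\nu,\overbar\nu]$. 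Assembling these estimates produces the finite constant $c_3$ and completes the verification that the pair $(a_1,a_2)$ satisfies Assumption~\ref{A:nonlinearity}; the genuinely delicate point throughout is the simultaneous handling of the origin, where $\nuh'(0)=0$ is indispensable, and the global control of the radial second-order term by Assumption~\ref{B:nonlinearity}(iii).
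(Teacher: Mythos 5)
Your treatment of (i) and (ii) is correct and is exactly the standard argument hiding behind the paper's citation of \cite{Pechstein2004}: the Jacobian $\partial a_1(y)=\nuh(|y|)I+\tfrac{\nuh'(|y|)}{|y|}\,y\otimes y$ is symmetric with eigenvalues $g'(|y|)$ (simple) and $g(|y|)/|y|$ (double), both of which Assumption~\ref{B:nonlinearity}(i)--(ii) pin into $[\underline\nu,\overbar\nu]$, and the line-integral representation then yields monotonicity and Lipschitz continuity of $a_1$. The paper's own proof consists of two citations (to \cite{Pechstein2004} for (i)--(ii) and to \cite[Lemma 3.7]{AmstutzGangl2019} for (iii)), so for (i)--(ii) you are simply supplying the details of the same route, and for (iii) your explicit computation of $\partial^2 a_1$ is more informative than what is printed.

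There is, however, one step in your argument for (iii) that does not follow from Assumption~\ref{B:nonlinearity} as literally stated. After regrouping, $\partial^2 a_1(y)=\nuh'(|y|)\,T(\hat y)+\nuh''(|y|)\,|y|\,\hat y\otimes\hat y\otimes\hat y$ with $\hat y=y/|y|$ and $T$ a bounded tensor field; you correctly identify $|y|\,\nuh''(|y|)$ as the delicate quantity, but then assert that it is controlled by the derivative bounds of Assumption~\ref{B:nonlinearity}(iii). That assumption bounds $\nuh''(s)$, not $s\,\nuh''(s)$, and the latter is what must stay bounded as $|y|\to\infty$. One can build $\nuh$ satisfying (i)--(iii) verbatim (e.g.\ with $\nuh'(s)=\sin(s^2)/s$ for large $s$, so that $\nuh'$, $\nuh''$, $s\nuh'$ and $\nuh=g(s)/s$ are all bounded) for which $s\,\nuh''(s)$, and hence the radial second derivative $g''(s)=2\nuh'(s)+s\nuh''(s)$, is unbounded, so that $\partial a_1$ is not globally Lipschitz and Assumption~\ref{A:nonlinearity}(iii) fails. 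To be fair, this gap is inherited from the lemma itself: the paper's proof claims that ``$a_1$ twice continuously differentiable'' suffices for \ref{A:nonlinearity}(iii), which is likewise only true when $\partial^2 a_1$ is globally bounded. For the concrete reluctivity used in the paper's numerics the bound holds trivially, since $\nuh''$ decays like $e^{-q_2 s^{q_3}}$ times a polynomial. The clean repair is to add $\sup_{s\ge 0}s\,|\nuh''(s)|<\infty$ to Assumption~\ref{B:nonlinearity}(iii); with that in hand your estimate of the radial term closes and the rest of your proof (including the correct observation that $\nuh'(0)=0$ tames the apparent singularities at the origin) is sound.
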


\begin{proof}
    All properties of Assumption \ref{A:nonlinearity} are clear for the linear function $a_2$. For $a_1$, items (i) and (ii) of Assumption \ref{A:nonlinearity} follow immediately from items (i) and (ii) of Assumption \ref{B:nonlinearity}, respecitively (see e.g. \cite{Pechstein2004}). Moreover, it is shown in \cite[Lemma 3.7]{AmstutzGangl2019} that Assumption \ref{B:nonlinearity}(iii) implies that $a_2$ is twice continuously differentiable, which is sufficient for Assumption \ref{A:nonlinearity}(iii).
\end{proof}

Using the ansatz $B = \curl u$ together with the Coulomb gauging condition $\Div u=0$, as well as the material law \eqref{materiallaw} and $a_1(y) := \nuh(\blue |y| \black )y$ and $a_2(y) := \nu_0 y$, we get from \eqref{maxwellmagstat} the boundary value problem
\ben \label{eq_state_motor}
    \mbox{find }u \in V: \int_\Dsf \Ca_\Omega(x, \curl u) \cdot \curl v \, dx = \int_{\Omega_2} M \cdot \curl v \, dx \quad \mbox{for all } v \in V,
\een
with the function space $V = \{ v \in H(\curl, \Dsf): u\times n = 0 \mbox{ on } \Gamma_B, u|_{\Gamma_l} = u|_{\Gamma_r}, \Div(u) = 0 \mbox{ in } \Dsf \}$ and the operator $\Ca_\Omega(x,y) = \chi_\Omega(x) a_1(y) + \chi_{D \setminus \Omega}(x) a_2(y)$. Note that we used the fact that $u \times n = 0$ on $\Gamma_B$ is sufficient for $\curl u \cdot n = B \cdot n = 0$ on $\Gamma_B$, see \cite{BuffaCostabelSheen2002, Kost1994, Pechstein2004}.

As an objective function we consider
\ben \label{eq_def_functional}
    J(\Omega) = \int_{\Omega_g} | \curl u \cdot \hat n - B_d^n |^2  \,dx
\een
where $\Omega_g$ represents the air gap of the machine, $\hat n = (x/ \sqrt{ x^2+y^2},y/ \sqrt{ x^2+y^2},0)^\top$ denotes an extension to the subdomain $\Omega_g$ of a unit normal vector field on a circular curve in the air gap and $B_d^n$ denotes the desired distribution of the normal component of the magnetic flux density $B=\curl u$ in the air gap. In our experiments, $B_d^n$ is given in cylindrical coordinates by
\ben
    B_d^n(r, \varphi, z) = - \mbox{amp}(z) \,  \mbox{sin}(4 \varphi)
\een
where $\mbox{amp}(z)$ is given by the evaluation of $(\curl u_{init} \cdot \hat n)$ at the point $(19.75, 22.5^\circ, z)$ inside the air gap $\Omega_g$. Here, $u_{init}$ denotes the solution to the PDE constraint in the initial configuration. The left picture in Figure \ref{fig_curlun} shows $\curl(u)\cdot \hat n$ as a function of the angle $\varphi \in [0,90^\circ]$ and $z \in [-2.5, 2.5]$ for a fixed value of $r=19.75$ (center of the air gap) for the initial configuration. The desired curve $B_d^n$ is depicted in the center of Figure \ref{fig_curlun}. We remark that the minimization of the objective function \eqref{eq_def_functional} yields a design of a machine which exhibits a smooth rotation pattern. Note the slight difference of objective function \eqref{eq_def_functional} to the functional \eqref{defJ} which was treated in the earlier sections. We remark, however, that all of the analysis can be performed for the given functional \eqref{eq_def_functional} in the exact same way. Note that the corresponding adjoint equation reads
\ben \label{eq_adjoint_motor}
    \int_\Dsf \partial_{u}\Ca_\Omega(x,\curl u)(\curl \varphi)\cdot \curl p \;dx = -  \int_\Dsf 2(\curl u \cdot \hat n- B_d^n) ( \curl \varphi \cdot \hat n) \;dx,
\een
where $u$ solves \eqref{eq_state_motor}.

\begin{figure}
    \begin{tabular}{ccc}
        \includegraphics[width=0.33\textwidth]{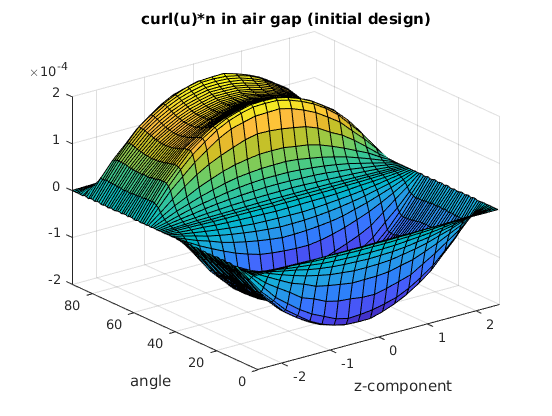}&        \includegraphics[width=0.33\textwidth]{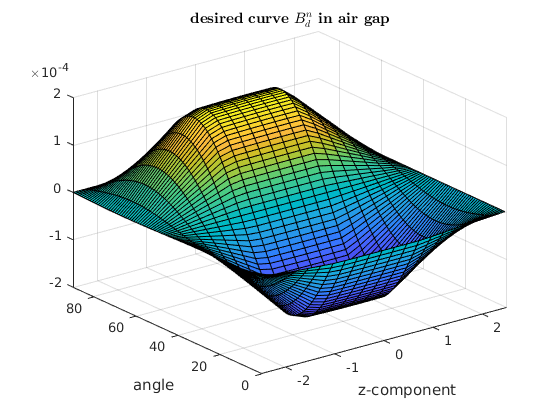}&
        \includegraphics[width=0.33\textwidth]{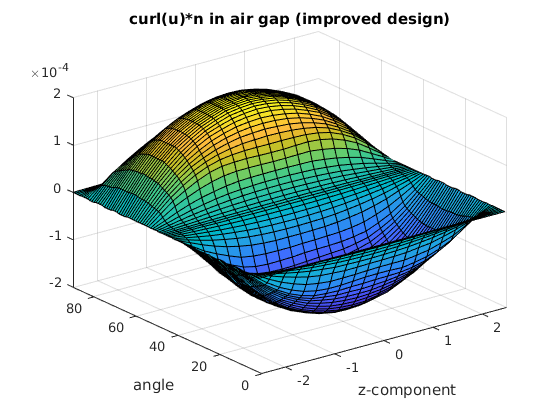}
    \end{tabular}
    \caption{$\curl(u)\cdot n_{2D}$ along the air gap for $(r, \varphi, z)$ with $r=19.75$ and $\varphi \in [0, 90^\circ]$, $z \in  [-2.5, 2.5]$. Left: Initial configuration. Center: desired curve $B_d^n$. Right: Improved configuration.}
    \label{fig_curlun}
\end{figure}


\subsection{Numerical Results}
In this section, we illustrate how the formula derived in Section \ref{sec:topological_derivative} can be applied to the optimization of the electrical machine introduced in this section. The evaluation of the topological derivative \eqref{eq_td} is done as described in Section \ref{sec:numerics}. We precomputed the values $dJ(\Omega)(t e_1, e_i)$ for $i=1,2,3$ and $t \in \{ j \, \delta_t \}_{j=0}^{40}$ with $\delta_t=0.05$ and interpolated the obtained data using quadratic B-splines in an offline phase.

For the numerical solution of the state equation \eqref{eq_state_motor}, we used second order N\'{e}d\'{e}lec finite elements, see e.g. \cite{ZaglmayrDiss2006}, \cite[Sec. 3]{SchoeberlSkript}, in the framework of the finite element software package \texttt{NETGEN/NGSolve} \cite{Schoeberl2014}. Problem \eqref{eq_state_motor} involves a divergence-free condition. In order to avoid solving a saddle point problem, we added an $L^2$-term $\int_{\Dsf} \kappa u \cdot v \, dx$ with a small constant $\kappa>0$ as regularization to the bilinear form, yielding an elliptic problem on $H(\Dsf,\curl )$. We proceeded analogously in the numerical solution of the corresponding adjoint equation \eqref{eq_adjoint_motor} and the problems for the approximation of the variation $K$ \eqref{eq_HepsTilde} in the offline phase.

We started with the initial configuration shown in Figure \ref{fig_motor3D}, where all material data is constant in $z$-direction. Figures \ref{fig_init} and \ref{fig_iter1} show the application of a one-shot topology optimisation approach to \eqref{eq_def_functional} using a level set representation. The first row of Figure \ref{fig_init} shows the level set function in the two design subdomains of interest. We start with a constant level set function $\psi_0=1$ corresponding to ferromagnetic material in all of the two design subdomains. The left column in Figures \ref{fig_init} and \ref{fig_iter1} correspond to a horizontal cut at the bottom ($z \approx -2.5$), the central column shows a cut through the center of the machine ($z=0$), and the right column a cut through the top of the machine ($z \approx 2.5$).

The second row of Figure \ref{fig_init} shows the absolute value of the magnetic flux density $\blue |B| \black  = \blue | \curl u | \black $ for the three cross sections and the third row depicts the topological derivative. Note that the topological derivative attains its most negative values in the central cross section. For better visibility, we only show the negative part of the topological derivative in the central picture.

In order to change the material in the position where the topological derivative is most negative, we set 
\ben 
    \psi_1 = (1-s) \psi_0 + s \frac{dJ(\Omega) }{\| dJ(\Omega) \|_{L^2(\Dsf)}}
\een
for an appropriately chosen value of $s$ (here: $s \approx 0.14$).

The result can be seen in Figure \ref{fig_iter1} where the design in the top and bottom cross section remain unchanged and in total four holes of air are introduced in the center. The first row of Figure \ref{fig_iter1} shows the updated level set function $\psi_1$ and the second row the corresponding distribution of the magnetic flux density in the new design.

The third picture in Figure \ref{fig_curlun} shows the distribution of $\curl u \cdot \hat n$ for the new configuration. The objective value \eqref{eq_def_functional} has dropped from $2.33*10^{-8}$ to $4.68*10^{-9}$.

\begin{figure}
    \begin{tabular}{ccc}
        \includegraphics[width=0.33\textwidth, trim=250 0 250 0, clip]{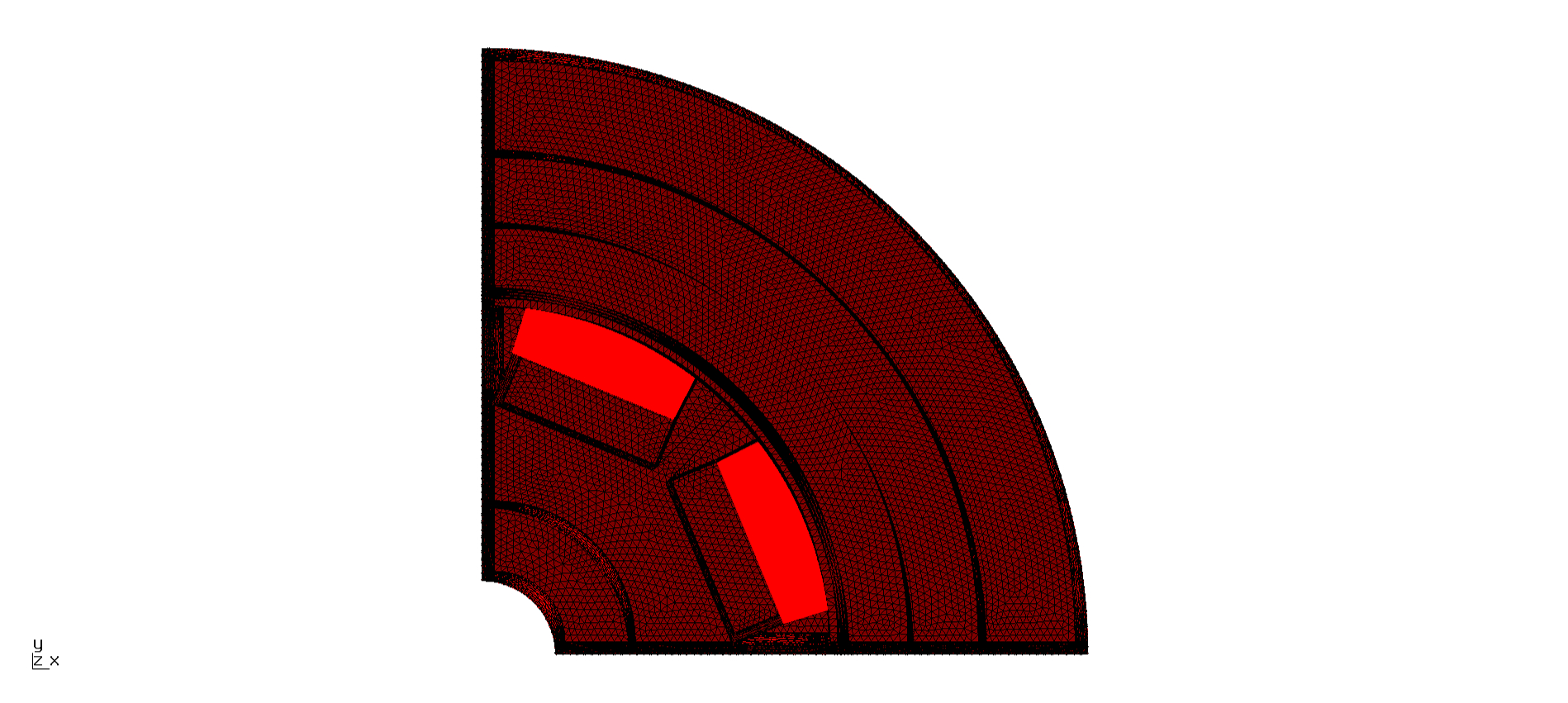} &
        \includegraphics[width=0.33\textwidth, trim=250 0 250 0, clip]{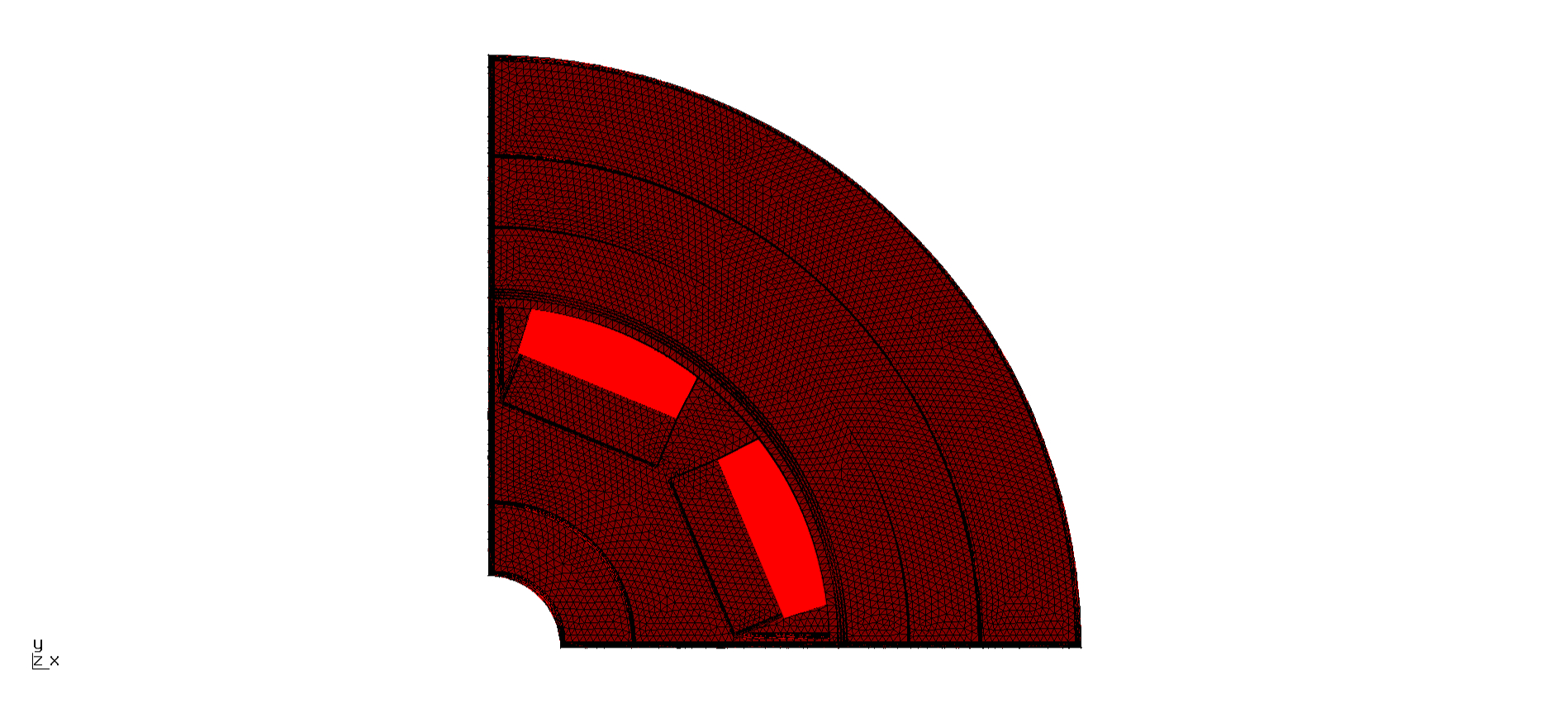} &
        \includegraphics[width=0.33\textwidth, trim=250 0 250 0, clip]{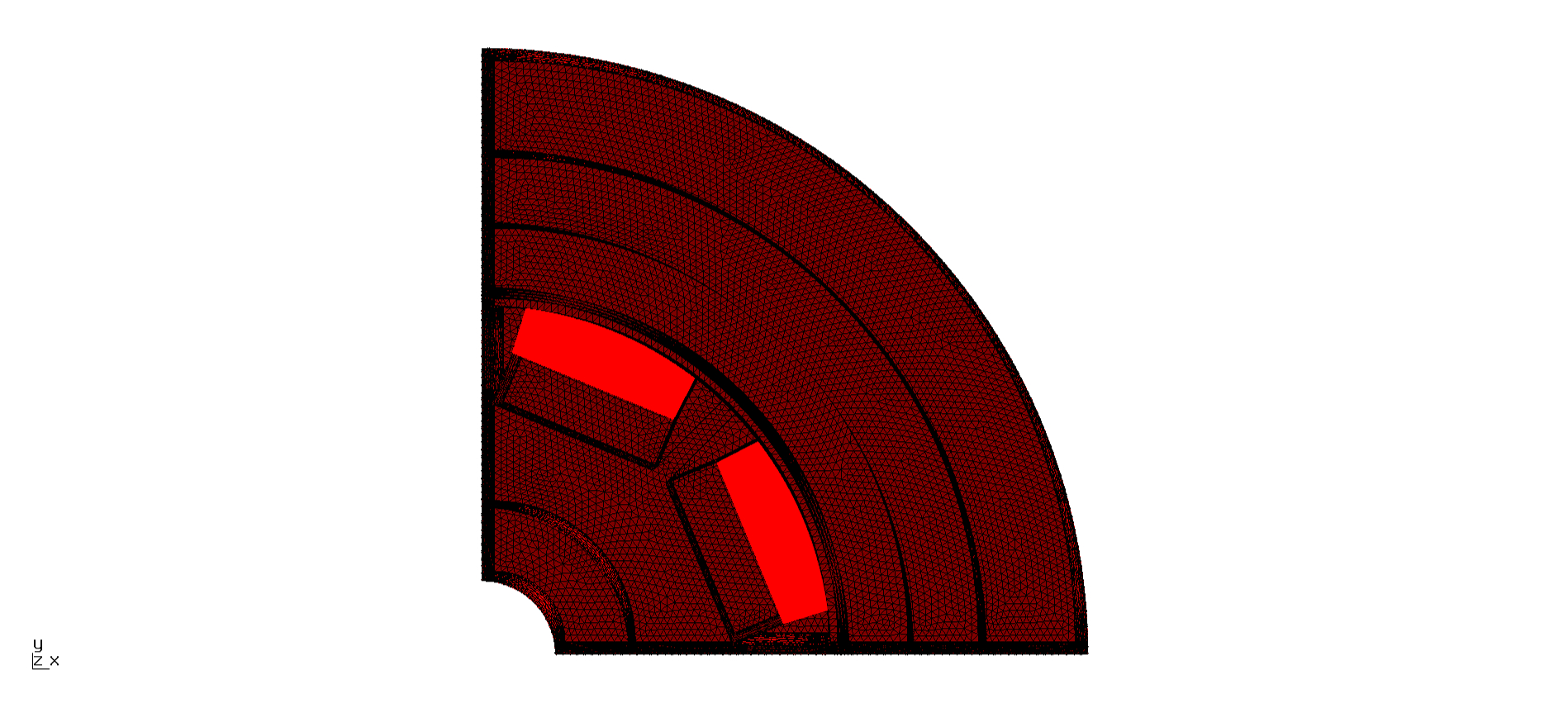} \\
        \includegraphics[width=0.33\textwidth, trim=250 0 250 0, clip]{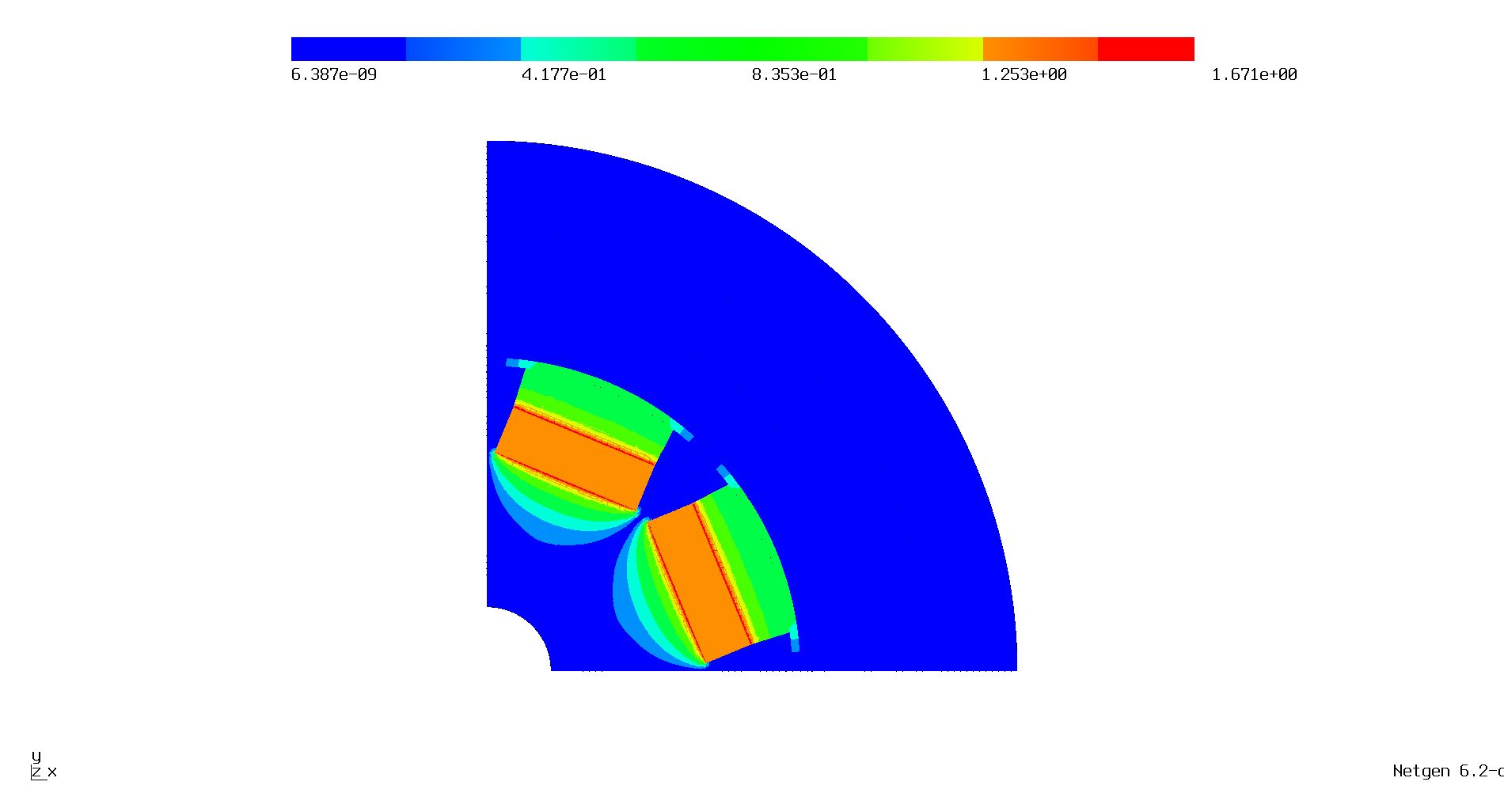} &
        \includegraphics[width=0.33\textwidth, trim=250 0 250 0, clip]{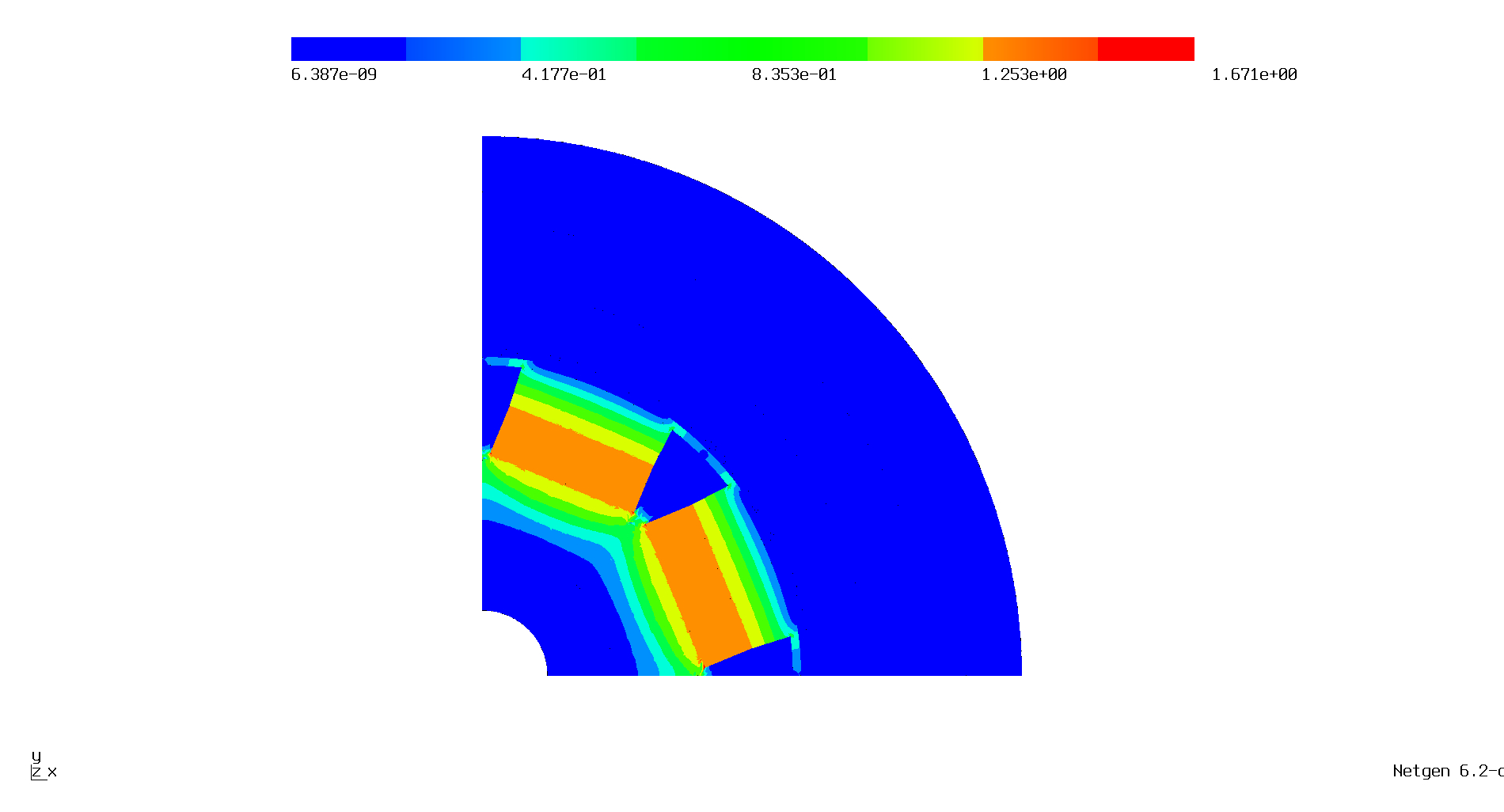} &
        \includegraphics[width=0.33\textwidth, trim=250 0 250 0, clip]{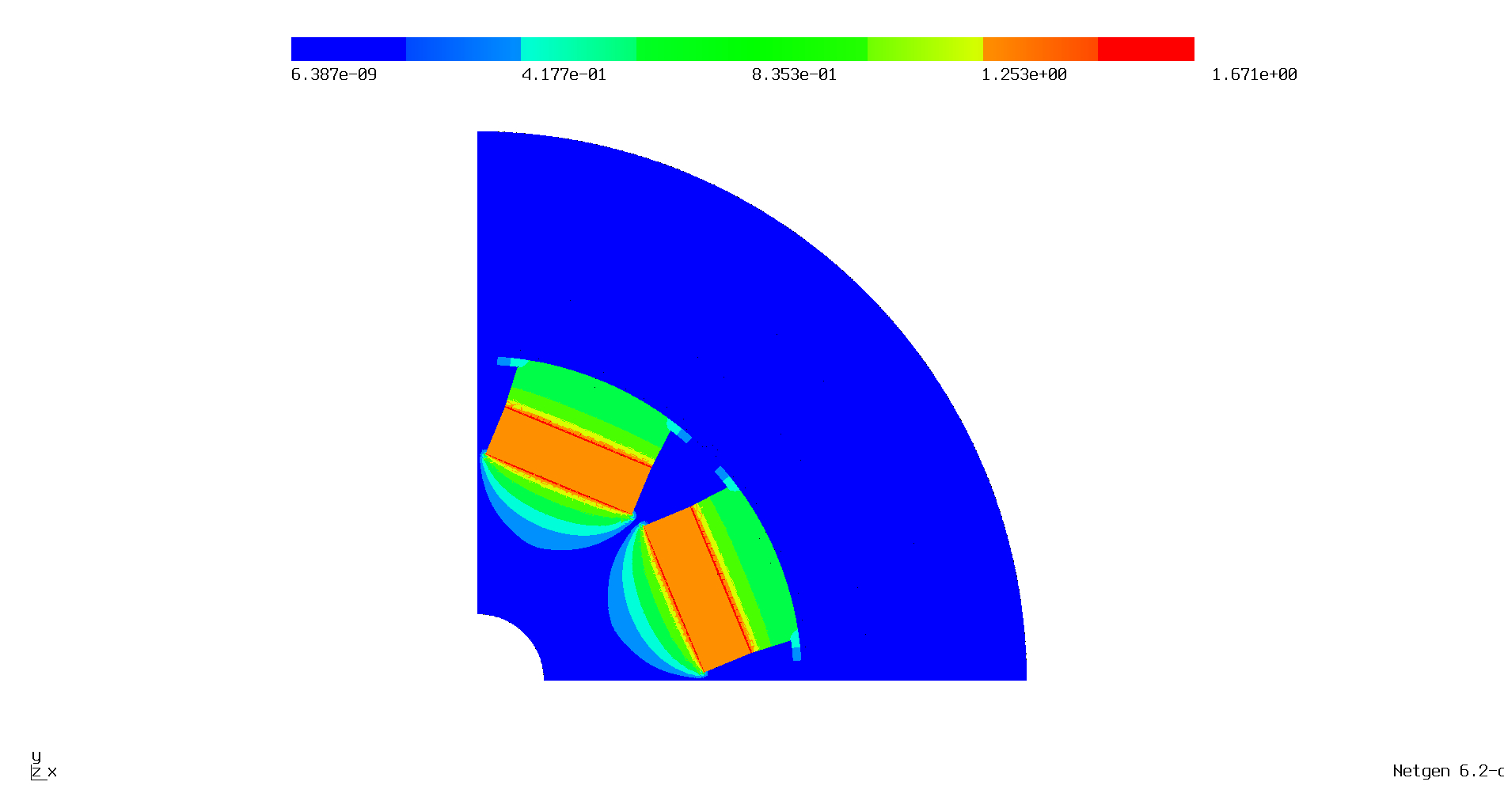} \\
        \includegraphics[width=0.33\textwidth, trim=250 0 190 0, clip]{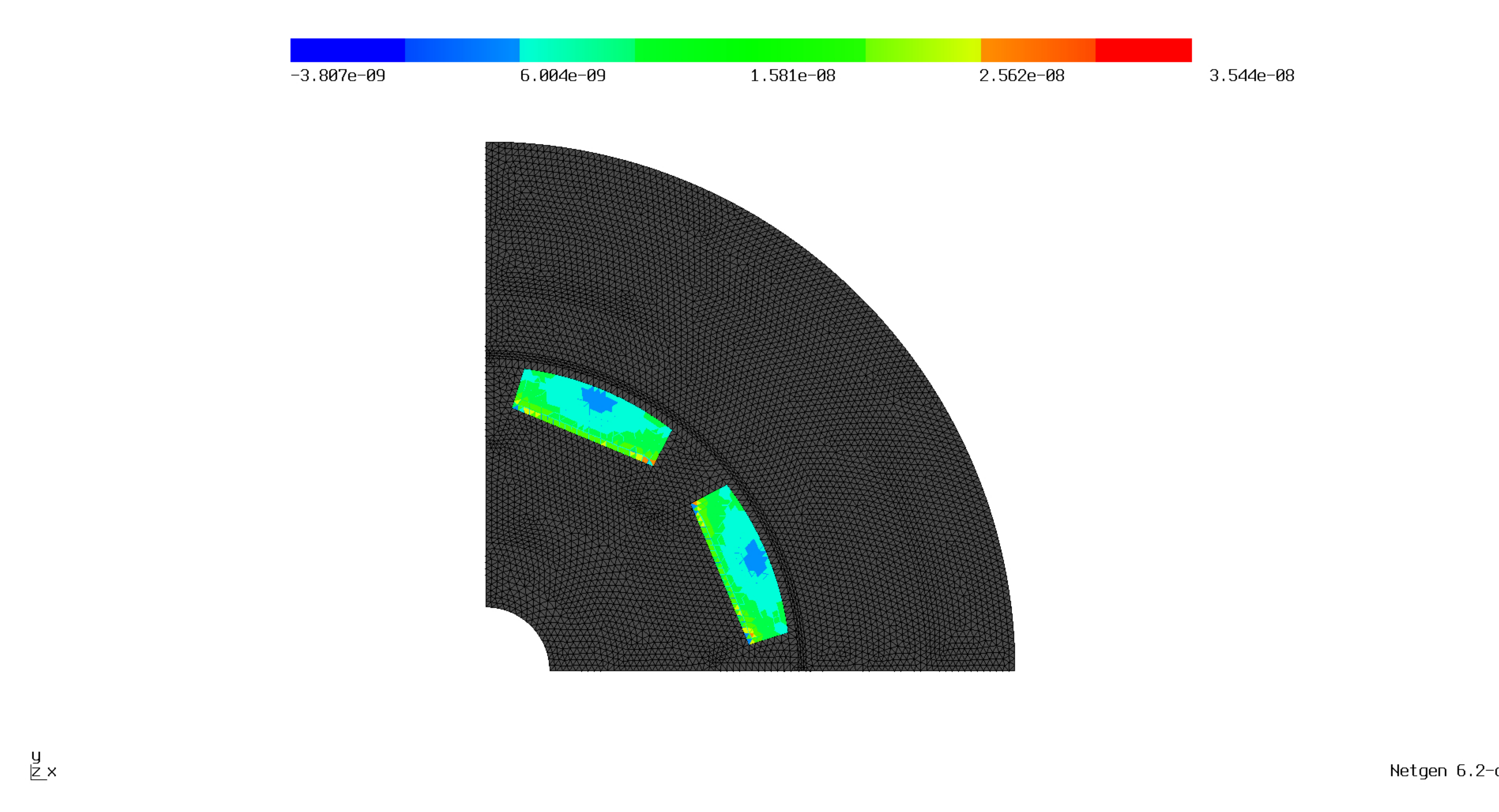} &
        \includegraphics[width=0.33\textwidth, trim=250 0 190 0, clip]{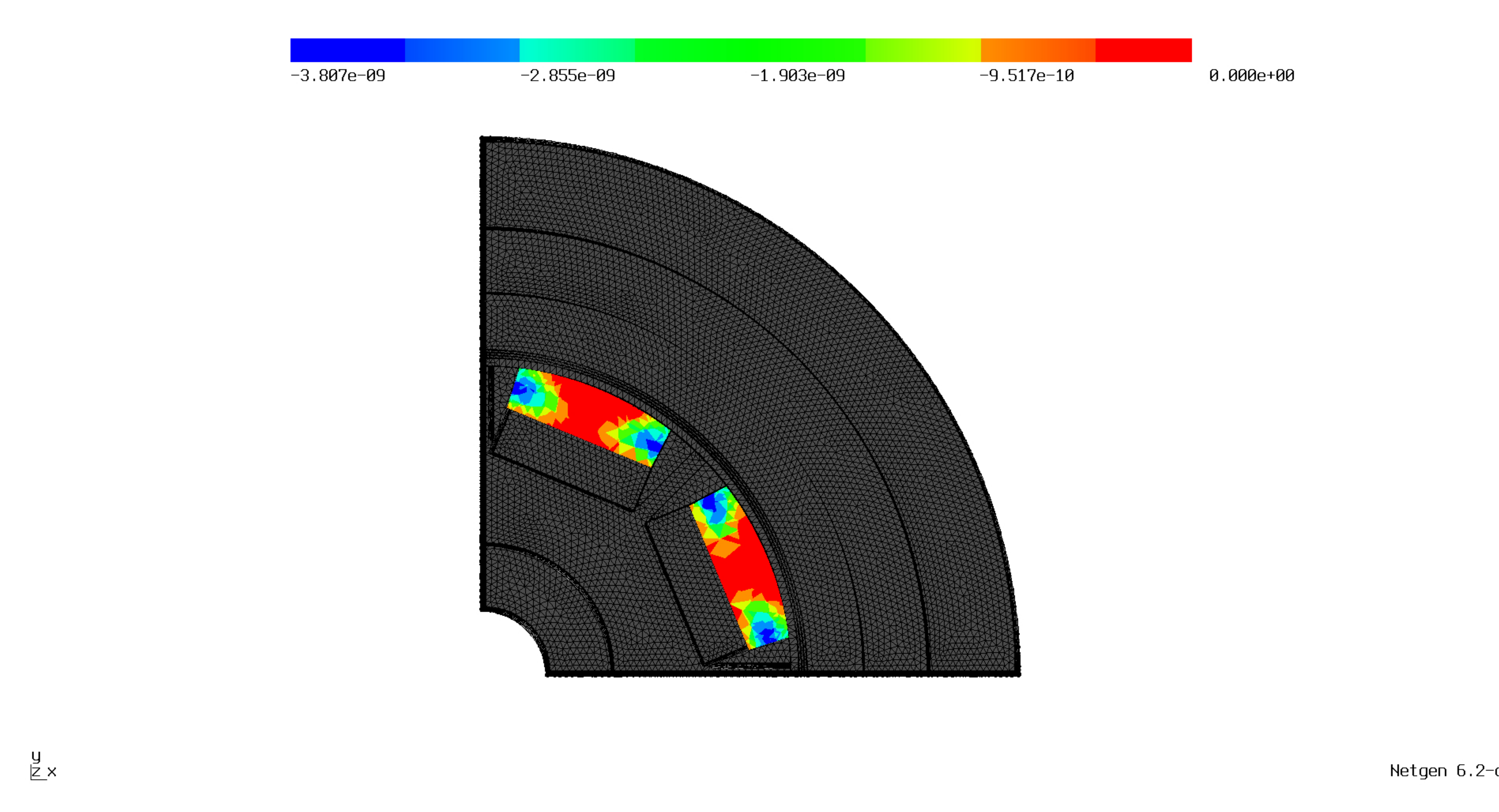} &
        \includegraphics[width=0.33\textwidth, trim=250 0 190 0, clip]{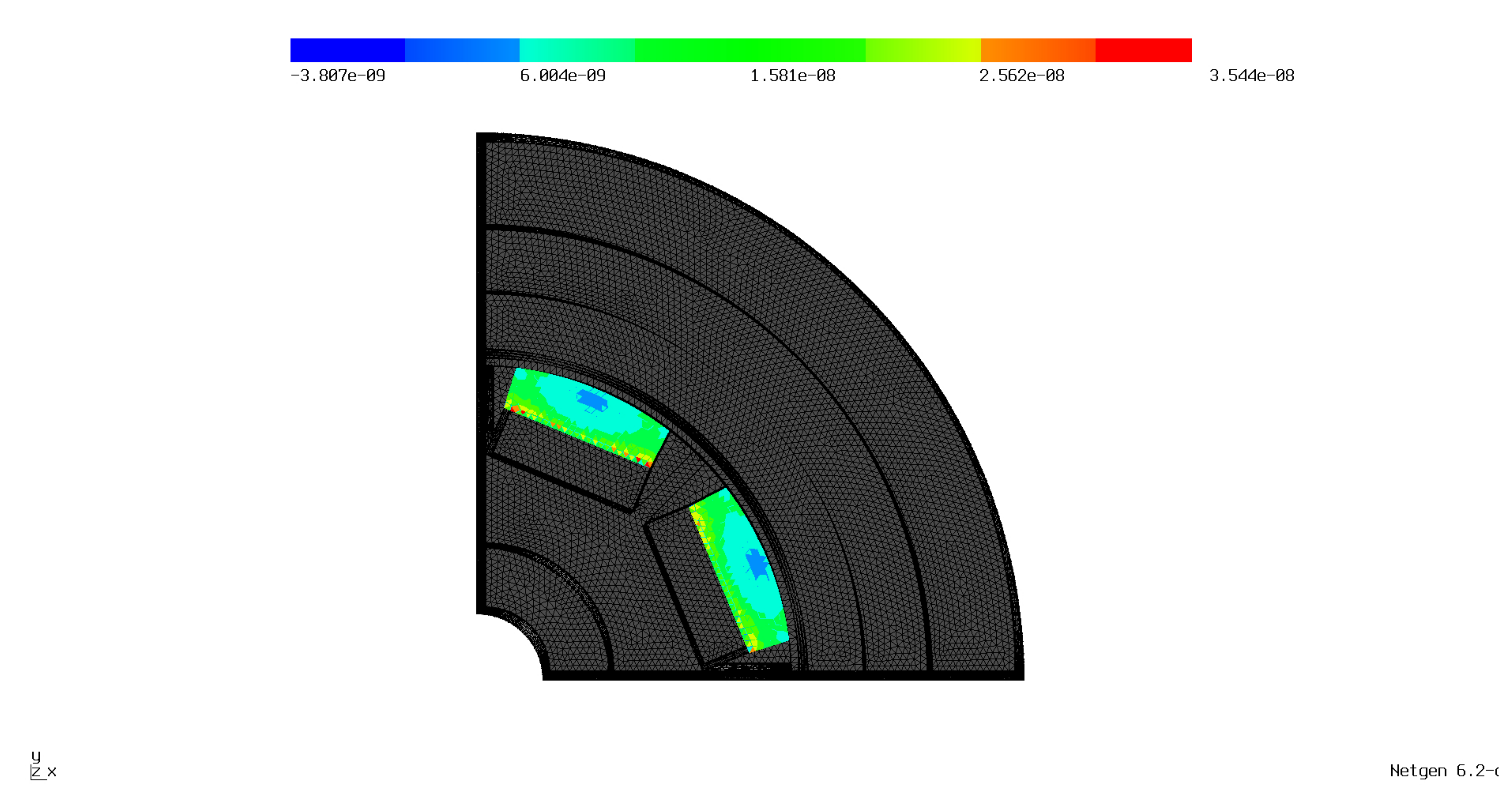} \\
    \end{tabular}
    \caption{Initial configuration, objective value $2.33 \cdot 10^{-8}$. 1st row: level set function. 2nd row: B-field ($\blue |B|\black  = \blue |\curl u |\black $). 3rd row: topological derivative. Left column: bottom. Central column: center. Right column: top.}
    \label{fig_init}
\end{figure}

\begin{figure}
    \begin{tabular}{ccc}
        \includegraphics[width=0.26\textwidth, trim=250 0 250 0, clip]{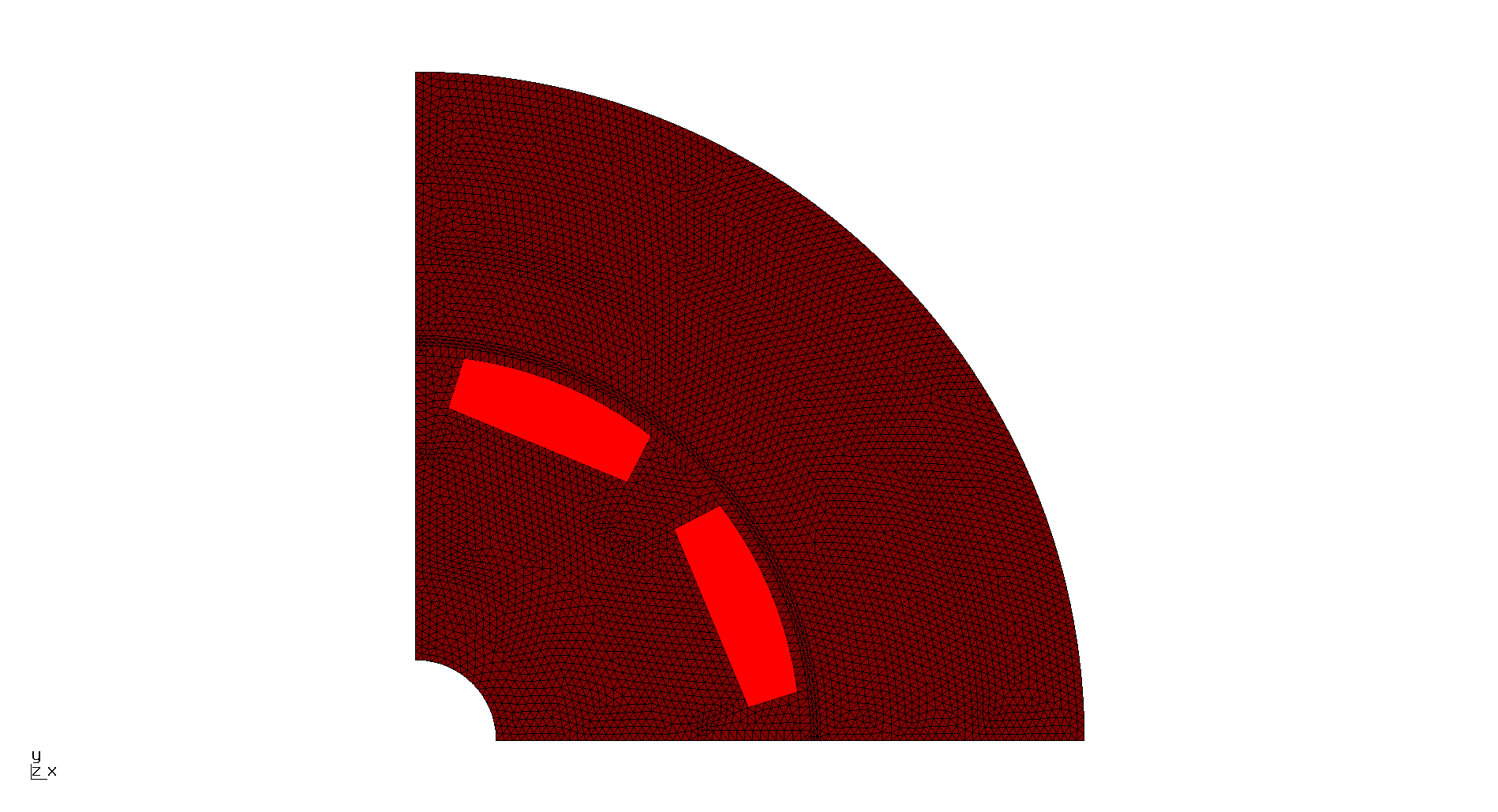} &
        \includegraphics[width=0.26\textwidth, trim=250 0 250 0, clip]{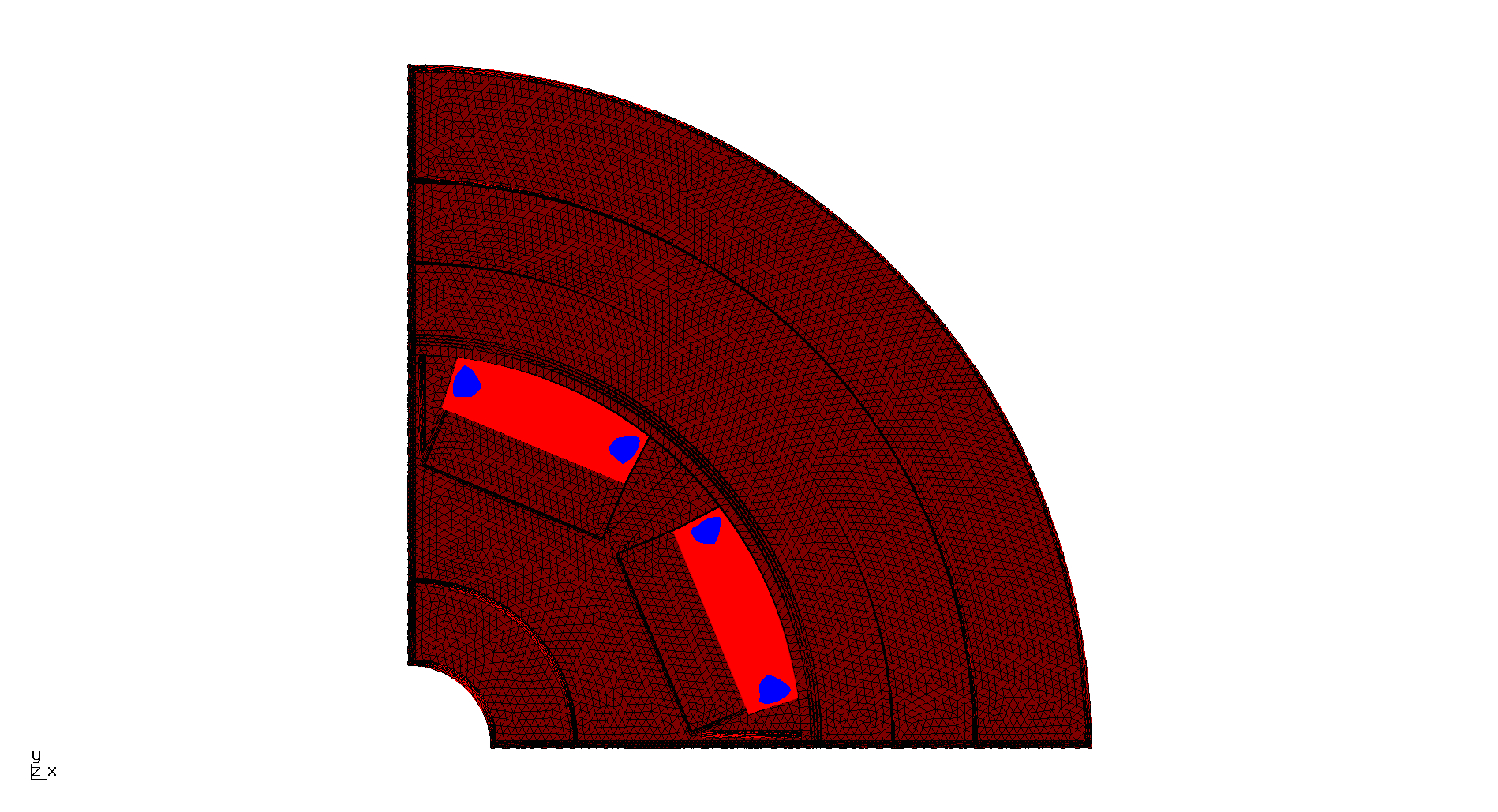}&
        \includegraphics[width=0.26\textwidth, trim=250 0 250 0, clip]{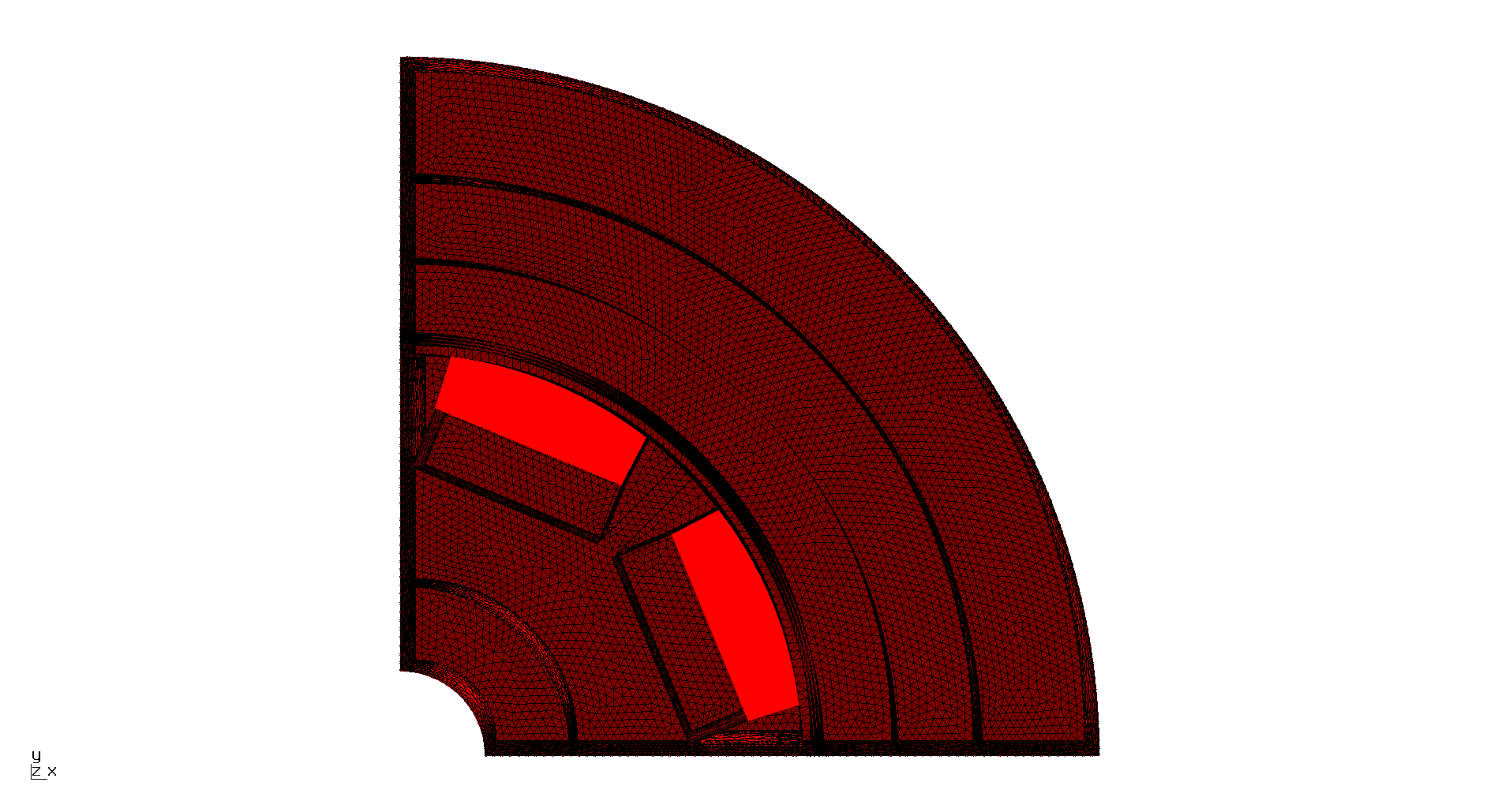} \\
        \includegraphics[width=0.33\textwidth, trim=250 0 250 0, clip]{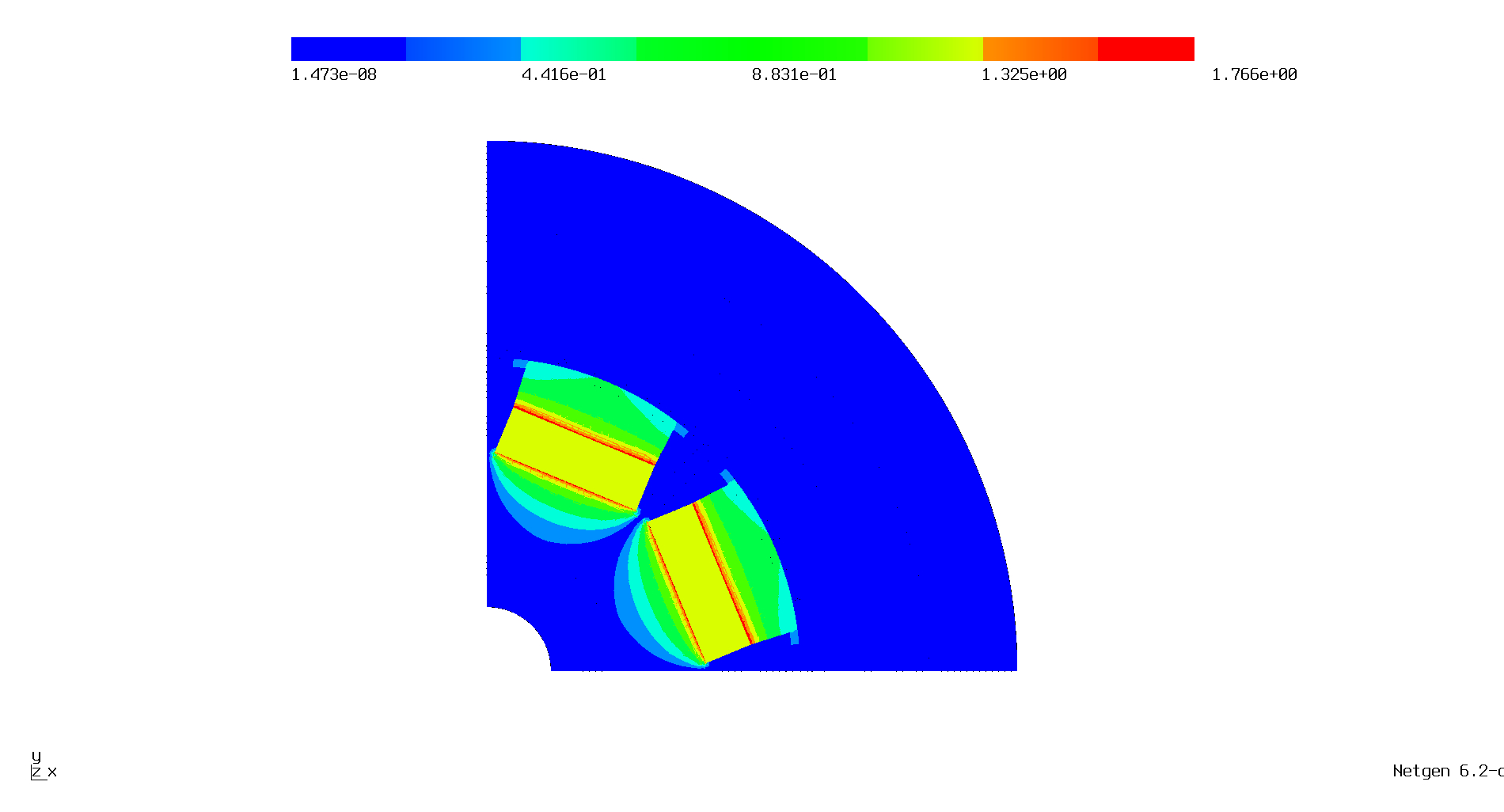} &
        \includegraphics[width=0.33\textwidth, trim=250 0 250 0, clip]{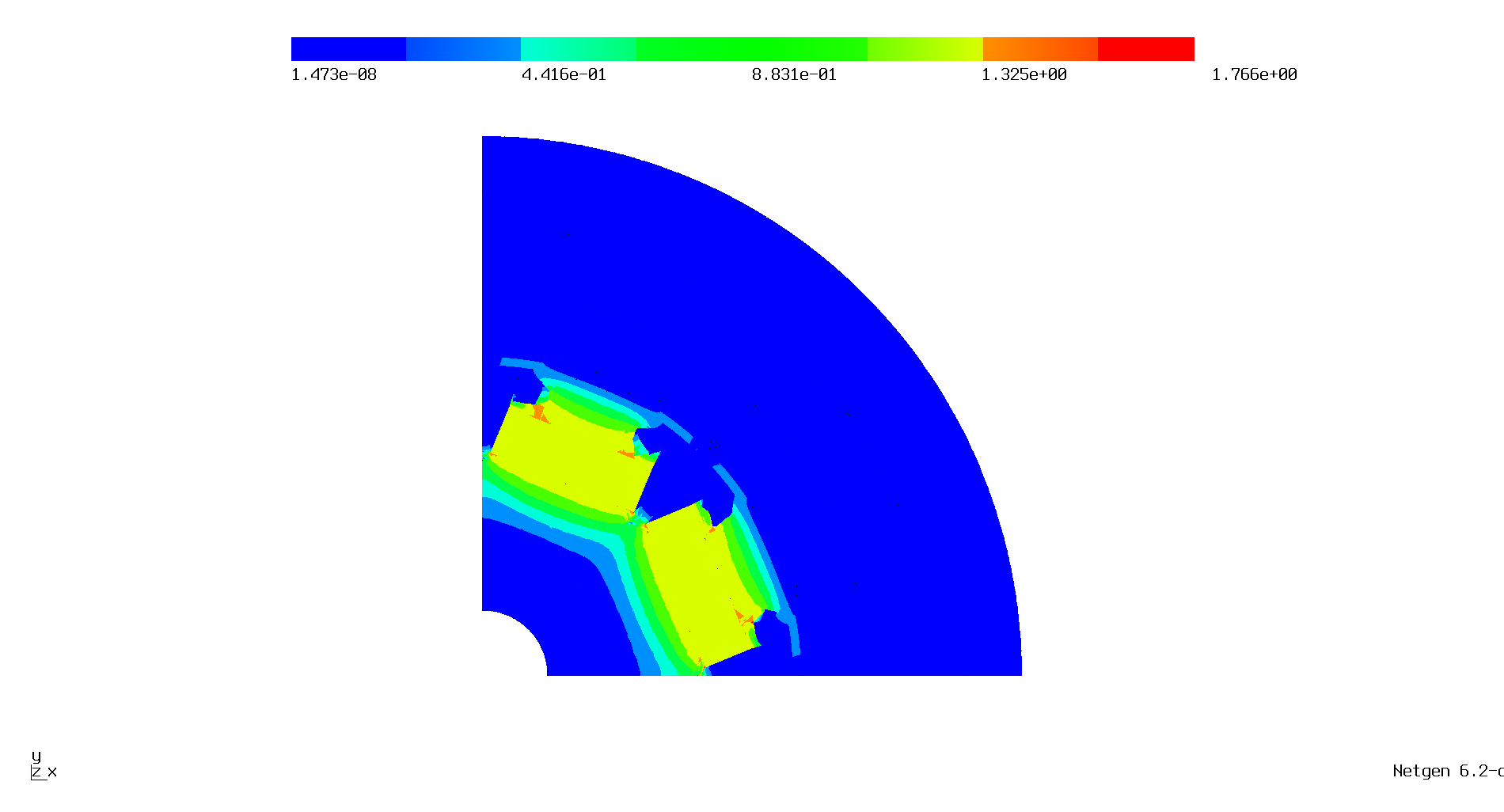} &
        \includegraphics[width=0.33\textwidth, trim=250 0 250 0, clip]{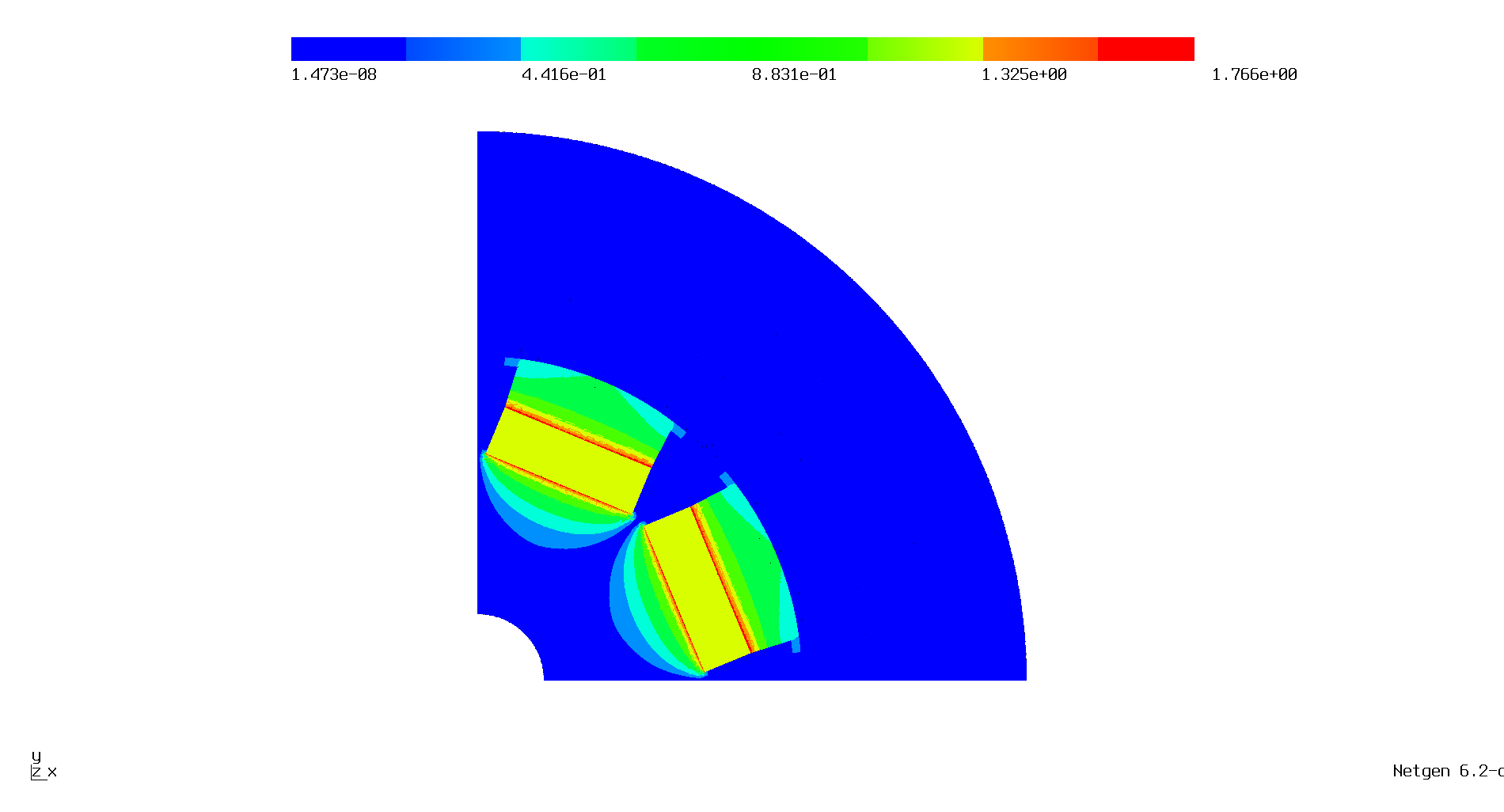} 
    \end{tabular}
    \caption{Improved configuration (after 1 iteration, objective value $4.68 \cdot 10^{-9}$). 1st row: level set function. 2nd row: B-field ($\blue |B| \black  = \blue |\curl u | \black $). 
    Left column: bottom. Central column: center. Right column: top.}
    \label{fig_iter1}
\end{figure}

\section*{Conclusion}
In this work we presented the rigorous derivation of the topological derivative for a class of quasi-linear $\curl$-$\curl$ problems under the assumption that $\curl u_0$ and $\curl p_0$ are (H\"older) continuous at the point where the topological perturbation takes place. We also discussed the efficient evaluation of the obtained formulas and applied our results to a physical model for an electrical machine. The results seem promising and show a significant improvement compared to the initial design.  

The magnetostatic model does not capture eddy currents. Therefore in a future work it would be interesting to consider the time-dependent magnetoquasistatic problem rather than the magnetostatics case. This however requires a thorough analysis and new tools have to be developed.

\section*{Appendix}
\subsection*{Lagrangian framework}\label{sec_adjFramework}
In this section we recall results on a Lagrangian framework. This section is taken from \cite[Sec. 2]{a_GanglSturm_2019a}.

\begin{definition}[parametrised Lagrangian]
 Let $X$ and $Y$ be vector spaces and $\tau >0$. A parametrised Lagrangian (or short Lagrangian) is a function
\begin{gather*}
(\eps,\fu,\fp) \mapsto  G(\eps,\fu,\fp): [0, \tau ] \times X \times Y \to \VR,
\end{gather*}
satisfying, 
        \ben
\fp\mapsto G(\eps,\fu,\fp) \quad \text{ is }  \text{affine} \text{ on } Y.
\een
\end{definition}

\begin{definition}[state and adjoint state]
Let $\eps \in [0,\tau]$.  We define the state equation by: find $u_\eps \in X$, such that 
  \ben\label{eq:state}
   \partial_\fp G(\eps,\fu_\eps,0)(\varphi)=0 \quad \text{ for all } \varphi\in Y. 
\een
The set of states is denoted $E(\eps)$. We define the adjoint state by: find $p_\eps\in Y$, such that 
\ben\label{eq:adjoint_state}
   \partial_u G(\eps,\fu_\eps,\fp_\eps)(\varphi)=0 \quad \text{ for all } \varphi\in  X. 
\een
The set of adjoint states associated with $(\eps,u_\eps)$ is denoted $Y(\eps,u_\eps)$.
\end{definition}

\begin{definition}[$\ell$-differentiable Lagrangian]\label{D:l_differentiable}
    Let $X$ and $Y$ be vector spaces and $\tau >0$. Let $\ell: [0,\tau] \to \VR$ be a given function satisfying $\ell(0)=0$ and $\ell(\eps)>0$ for 
$\eps \in (0,\tau]$. An $\ell$-differentiable parametrised Lagrangian is a parametrised Lagrangian $G:[0,\tau]\times X\times Y\to \VR$, satisfying,
\begin{itemize}
\item[(a)] for all $v,w\in X$ and $p\in Y$, 
    \ben\label{E:c1_lagrangian}
    s\mapsto G(\eps, v+s w, p) \text{ is continuously differentiable on } [0,1].
    \een
\item[(b)] for all $\fu_0\in E(0)$ and $\fp_0\in Y(0,\fu_0)$ the limit 
\ben
\partial_\ell G(0,\fu_0,\fp_0) := \lim_{\eps\searrow 0}\frac{G(\eps,\fu_0, \fp_0) - G(0,\fu_0, \fp_0)}{\ell(\eps)} \quad \text{ exists}.
\een
\end{itemize}
\end{definition}

\begin{assumption*}[H0]
    \begin{itemize}
     \item[(i)] We assume that for all $\eps \in [0,\tau]$, the set $E(\eps)=\{u_\eps\}$ is a singleton. 
     \item[(ii)] We assume that the adjoint equation for $\eps=0$, $\partial_u G(0,u_0,p_0)(\varphi)=0$ for all $\varphi \in E$, admits a unique solution.
    \end{itemize}
\end{assumption*}

We now give sufficient conditions when the function 
\ben\label{def_g}
\begin{split}
[0,\tau] &\to \VR \\
  \eps &\mapsto g(\eps) := G(\eps ,\fu_\eps,0),
  \end{split}
\een 
is one sided $\ell$-differentiable, that means, when the limit
\ben\label{E:dEll}
    d_{\ell}g(0):= \lim_{\eps\searrow 0}\frac{g(\eps)-g(0)}{\ell(\eps)}
    \een
    exists, where $\ell: [0,\tau] \to \VR$ is a given function satisfying $\ell(0)=0$ and $\ell(\eps)>0$ for $\eps \in (0,\tau]$. 
    
    \begin{theorem}[{\cite[Thm. 3.4]{a_GanglSturm_2019a} and \cite[Thm. 3.3]{Delfour2018}\;}] \label{thm:diff_lagrange}
    Let $G:[0,\tau]\times X\times Y \to \VR$ be an $\ell$-differentiable parametrised Lagrangian 
    satisfying Hypothesis~(H0). Define for $\eps > 0$, 
    \ben
    R_1^\eps (u_0,p_0):=   \frac{1}{\ell(\eps)}  \int_0^1 \left(\partial_u G(\eps,s u_\eps + (1-s)u_0, p_0) -   \partial_u G(\eps, u_0, p_0)\right)(u_\eps - u_0) \; ds
\een
and
\ben
\begin{split}
    R_2^\eps (u,p) :=\frac{1}{\ell(\eps)}(\partial_u G(\eps, u_0,p_0) - \partial_u G(0,u_0,p_0))(u_\eps - u_0).
\end{split}
\een
If $R_1(u_0,p_0) := \lim_{\eps\searrow 0} R^\eps_1(u_0,p_0)$ and $R_2(u_0,p_0) := \lim_{\eps\searrow 0} R^\eps_2(u_0,p_0)$ exist, then  
\benn
d_\ell g(0) = \partial_\ell G(0,\fu_0,\fp_0) + R_1(u_0,p_0) + R_2(u_0,p_0).
\eenn
\end{theorem}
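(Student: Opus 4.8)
The plan is to run the standard averaged-adjoint (Céa-type) telescoping argument: I will exploit the affineness of $G$ in its last slot together with the state and adjoint equations to turn the difference quotient of $g$ (see \eqref{def_g}, \eqref{E:dEll}) into exactly $R_1^\eps$, $R_2^\eps$ and the partial $\ell$-derivative, and then pass to the limit using the existence hypotheses.

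First I would evaluate $g$ at the fixed adjoint state $p_0$ rather than at $0$. Since $\fp\mapsto G(\eps,u,\fp)$ is affine we may write $G(\eps,u_\eps,p_0)=G(\eps,u_\eps,0)+\partial_\fp G(\eps,u_\eps,0)(p_0)$, and the state equation \eqref{eq:state} (with test direction $p_0\in Y$) gives $\partial_\fp G(\eps,u_\eps,0)(p_0)=0$. Hence $g(\eps)=G(\eps,u_\eps,0)=G(\eps,u_\eps,p_0)$ for every $\eps\in[0,\tau]$, and in particular $g(0)=G(0,u_0,p_0)$. Splitting the resulting difference gives
\ben
g(\eps)-g(0)=\underbrace{\big[G(\eps,u_\eps,p_0)-G(\eps,u_0,p_0)\big]}_{(\mathrm{I})}+\underbrace{\big[G(\eps,u_0,p_0)-G(0,u_0,p_0)\big]}_{(\mathrm{II})}.
\een
Dividing $(\mathrm{II})$ by $\ell(\eps)$ and letting $\eps\searrow 0$ produces $\partial_\ell G(0,u_0,p_0)$ directly from Definition~\ref{D:l_differentiable}(b) (which applies since $u_0\in E(0)$ and $p_0$ is the adjoint state guaranteed by Hypothesis~(H0)), so the whole argument reduces to analysing $(\mathrm{I})$.

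For $(\mathrm{I})$ I would invoke the fundamental theorem of calculus in the state slot, justified by the $C^1$ property \eqref{E:c1_lagrangian} applied with $v=u_0$, $w=u_\eps-u_0$, $p=p_0$:
\ben
G(\eps,u_\eps,p_0)-G(\eps,u_0,p_0)=\int_0^1 \partial_u G(\eps,su_\eps+(1-s)u_0,p_0)(u_\eps-u_0)\;ds.
\een
I then insert two null terms. Subtracting and adding $\partial_u G(\eps,u_0,p_0)(u_\eps-u_0)$ inside the integral isolates precisely $\ell(\eps)\,R_1^\eps(u_0,p_0)$ together with the $s$-independent remainder $\partial_u G(\eps,u_0,p_0)(u_\eps-u_0)$; subtracting and adding $\partial_u G(0,u_0,p_0)(u_\eps-u_0)$ then splits that remainder into $\ell(\eps)\,R_2^\eps(u_0,p_0)$ plus the leftover first-order term $\partial_u G(0,u_0,p_0)(u_\eps-u_0)$. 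The decisive cancellation is that this leftover vanishes: by Hypothesis~(H0) the $\eps=0$ adjoint equation \eqref{eq:adjoint_state} reads $\partial_u G(0,u_0,p_0)(\varphi)=0$ for all $\varphi\in X$, and $u_\eps-u_0\in X$.

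Collecting the identities yields
\ben
g(\eps)-g(0)=\ell(\eps)\,R_1^\eps(u_0,p_0)+\ell(\eps)\,R_2^\eps(u_0,p_0)+\big[G(\eps,u_0,p_0)-G(0,u_0,p_0)\big],
\een
and dividing by $\ell(\eps)$ and sending $\eps\searrow 0$ gives the asserted formula, all three limits existing by hypothesis (for $R_1$ and $R_2$) and by $\ell$-differentiability (for the last term). This proof is essentially algebraic bookkeeping, and the only analytic ingredient is the $C^1$-regularity needed for the fundamental theorem of calculus. I do not expect a genuine obstacle; the one conceptual point—the heart of the Lagrangian method—is the choice to evaluate $g$ at the adjoint state $p_0$, which is free by the state equation and which arranges for the residual linear term $\partial_u G(0,u_0,p_0)(u_\eps-u_0)$ to be annihilated by the adjoint equation. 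The only place to be careful is tracking the two null-insertions so that the remainders coincide verbatim with the definitions of $R_1^\eps$ and $R_2^\eps$.
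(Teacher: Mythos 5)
Your proof is correct and is precisely the standard argument behind this theorem (which the paper itself only cites from \cite[Thm.~3.4]{a_GanglSturm_2019a} and \cite[Thm.~3.3]{Delfour2018}): using affineness in $p$ plus the state equation to replace $G(\eps,u_\eps,0)$ by $G(\eps,u_\eps,p_0)$, applying the fundamental theorem of calculus in the state slot, inserting the two null terms to produce $\ell(\eps)R_1^\eps$ and $\ell(\eps)R_2^\eps$, and killing the leftover term $\partial_u G(0,u_0,p_0)(u_\eps-u_0)$ with the adjoint equation. All the hypotheses are invoked where needed, so there is nothing to add.
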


\bibliography{topological_3D_rev1}

\begin{thebibliography}{10}

\bibitem{a_AMVOVO_2001a}
H.~Ammari, M.~S. Vogelius, and D.~Volkov.
\newblock Asymptotic formulas for perturbations in the electromagnetic fields
  due to the presence of inhomogeneities of small diameter ii. the full maxwell
  equations.
\newblock {\em Journal de Mathématiques Pures et Appliquées}, 80(8):769--814,
  2001.

\bibitem{a_AM_2006b}
S.~Amstutz.
\newblock Topological sensitivity analysis for some nonlinear {PDE} systems.
\newblock {\em Journal de Math{\'{e}}matiques Pures et Appliqu{\'{e}}es},
  85(4):540--557, apr 2006.

\bibitem{a_AMBO_2017a}
S.~Amstutz and A.~Bonnaf{\'{e}}.
\newblock Topological derivatives for a class of quasilinear elliptic
  equations.
\newblock {\em Journal de Math{\'{e}}matiques Pures et Appliqu{\'{e}}es},
  107(4):367--408, 2017.

\bibitem{AmstutzGangl2019}
S.~Amstutz and P.~Gangl.
\newblock Topological derivative for the nonlinear magnetostatic problem.
\newblock {\em Electron. Trans. Numer. Anal.}, 51:169--218, 2019.

\bibitem{a_BAPASC_2016a}
S.~Bauer, D.~Pauly, and M.~Schomburg.
\newblock The maxwell compactness property in bounded weak lipschitz domains
  with mixed boundary conditions.
\newblock {\em {SIAM} Journal on Mathematical Analysis}, 48(4):2912--2943, jan
  2016.

\bibitem{BuffaCostabelSheen2002}
A.~Buffa, M.~Costabel, and D.~Sheen.
\newblock On traces for {H(curl,$\Omega$)} in {L}ipschitz domains.
\newblock {\em Journal of Mathematical Analysis and Applications}, 276(2):845
  -- 867, 2002.

\bibitem{a_CAZY_1952a}
A.~P. Calderon and A.~Zygmund.
\newblock On the existence of certain singular integrals.
\newblock {\em Acta Math.}, 88:85--139, 1952.

\bibitem{a_CAZY_1989a}
A.~P. Calder{\'{o}}n and A.~Zygmund.
\newblock Singular integral operators and differential equations.
\newblock In {\em Selected Papers of Antoni Zygmund}, pages 257--277. Springer
  Netherlands, 1989.

\bibitem{a_COLE_2012a}
M.~Costabel and F.~Le~Lou{\"e}r.
\newblock Shape derivatives of boundary integral operators in electromagnetic
  scattering. part ii: Application to scattering by a homogeneous dielectric
  obstacle.
\newblock {\em Integral Equations and Operator Theory}, 73(1):17--48, May 2012.

\bibitem{Delfour2018}
M.~C. Delfour.
\newblock {\em Control, Shape, and Topological Derivatives via Minimax
  Differentiability of Lagrangians}, pages 137--164.
\newblock Springer International Publishing, Cham, 2018.

\bibitem{a_DELI_1955a}
J.~Deny and J.~L. Lions.
\newblock Les espaces du type de {B}eppo {L}evi.
\newblock {\em Ann. Inst. Fourier, Grenoble}, 5:305--370 (1955), 1953--54.

\bibitem{b_EV_2010a}
L.~Evans.
\newblock {\em Partial differential equations}.
\newblock American Mathematical Society, Providence, R.I, 2010.

\bibitem{a_GALALAMEST_2015a}
P.~Gangl, U.~Langer, A.~Laurain, H.~Meftahi, and K.~Sturm.
\newblock Shape optimization of an electric motor subject to nonlinear
  magnetostatics.
\newblock {\em SIAM Journal on Scientific Computing}, 37(6):B1002--B1025, jan
  2015.

\bibitem{a_GanglSturm_2019a}
P.~{Gangl} and K.~{Sturm}.
\newblock {A simplified derivation technique of topological derivatives for
  quasi-linear transmission problems}.
\newblock {\em arXiv e-prints}, page arXiv:1907.13420, Jul 2019.

\bibitem{b_GITR_2001a}
D.~Gilbarg and N.~Grudinger.
\newblock {\em Elliptic partial differential equations of second order}.
\newblock Springer, Berlin New York, 2001.

\bibitem{GiraultRaviart1986}
V.~Girault and P.A Raviart.
\newblock {\em Finite Element Methods for Navier-Stokes Equations}.
\newblock Springer Berlin Heidelberg, 1986.

\bibitem{a_HE_2012a}
F.~Hettlich.
\newblock The domain derivative of time-harmonic electromagnetic waves at
  interfaces.
\newblock {\em Mathematical Methods in the Applied Sciences},
  35(14):1681--1689, jun 2012.

\bibitem{a_HILAYO_2015a}
M.~Hintermüller, A.~Laurain, and I.~Yousept.
\newblock Shape sensitivities for an inverse problem in magnetic induction
  tomography based on the eddy current model.
\newblock {\em Inverse Problems}, 31(6):065006, may 2015.

\bibitem{a_HILI_2018a}
R.~Hiptmair and J.~Li.
\newblock Shape derivatives for scattering problems.
\newblock {\em Inverse Problems}, 34(10):105001, jul 2018.

\bibitem{a_IGNAROSOSZ_2009a}
M.~Iguernane, S.~Nazarov, J.-R. Roche, J.~Sokolowski, and K.~Szulc.
\newblock Topological derivatives for semilinear elliptic equations.
\newblock {\em International Journal of Applied Mathematics and Computer
  Science}, 19(2), jan 2009.

\bibitem{Kost1994}
A.~Kost.
\newblock {\em Numerische Methoden in der Berechnung elektromagnetischer
  Felder}.
\newblock Springer-Verlag Berlin Heidelberg, 1994.

\bibitem{a_MAPOSA_2005a}
M.~Masmoudi, J.~Pommier, and B.~Samet.
\newblock The topological asymptotic expansion for the maxwell equations and
  some applications.
\newblock {\em Inverse Problems}, 21(2):547--564, 2005.

\bibitem{a_MAPOSA_2005aa}
M.~Masmoudi, J.~Pommier, and B.~Samet.
\newblock The topological asymptotic expansion for the {M}axwell equations and
  some applications.
\newblock {\em Inverse Problems}, 21(2):547--564, 2005.

\bibitem{NovotnySokolowski2013}
A.~A. Novotny and J.~Soko{\l}owski.
\newblock {\em {Topological Derivatives in Shape Optimization}}.
\newblock Springer-Verlag Berlin Heidelberg, 2013.

\bibitem{a_ORSU_2012a}
C.~Ortner and E.~S\"{u}li.
\newblock A note on linear elliptic systems on $\mathbb{R}^d$.
\newblock {\em ArXiv e-prints}, 1202.3970, 2012.

\bibitem{Pechstein2004}
C.~Pechstein.
\newblock Multigrid-{N}ewton-methods for nonlinear magnetostatic problems.
\newblock Master's thesis, Johannes Kepler University Linz, 2004.

\bibitem{PechsteinJuettler2006}
C.~Pechstein and B.~J{\"u}ttler.
\newblock Monotonicity-preserving interproximation of {B-H}-curves.
\newblock {\em J. Comp. App. Math.}, 196:45--57, 2006.

\bibitem{a_SAVA_2004a}
T.~Samrowski and W.~Varnhorn.
\newblock The {P}oisson equation in homogeneous {S}obolev spaces.
\newblock {\em Int. J. Math. Math. Sci.}, (33-36):1909--1921, 2004.

\bibitem{SchoeberlSkript}
J.~Sch\"oberl.
\newblock Numerical methods for {M}axwell equations.
\newblock Lecture notes, TU Vienna, 2009.

\bibitem{Schoeberl2014}
J.~Sch\"oberl.
\newblock C++11 implementation of finite elements in ngsolve.
\newblock Technical Report~30, Institute for Analysis and Scientific Computing,
  Vienna University of Technology, 2014.

\bibitem{p_SC_2018a}
B.~Schweizer.
\newblock On friedrichs inequality, helmholtz decomposition, vector potentials,
  and the div-curl lemma.
\newblock In {\em Springer {INdAM} Series}, pages 65--79. Springer
  International Publishing, 2018.

\bibitem{phd_SE_2018a}
A.~Seyfert.
\newblock {\em The Helmholtz-Hodge Decomposition in Lebesgue Spaces on Exterior
  Domains and Evolution Equations on the Whole Real Time Axis}.
\newblock PhD thesis, Technische Universit{\"a}t, Darmstadt, 2018.

\bibitem{a_SISO_1996a}
C.~G. Simader and H.~Sohr.
\newblock {\em The {D}irichlet problem for the {L}aplacian in bounded and
  unbounded domains}, volume 360 of {\em Pitman Research Notes in Mathematics
  Series}.
\newblock Longman, Harlow, 1996.
\newblock A new approach to weak, strong and $(2+k)$-solutions in Sobolev-type
  spaces.

\bibitem{Sturm2019}
K.~{Sturm}.
\newblock {Topological sensitivities via a Lagrangian approach for semi-linear
  problems}.
\newblock {\em arXiv e-prints}, page arXiv:1803.00304, Mar 2018.

\bibitem{a_YO_2013a}
I.~Yousept.
\newblock Optimal control of quasilinear
  $\boldsymbol{H}(\mathbf{curl})$-elliptic partial differential equations in
  magnetostatic field problems.
\newblock {\em SIAM Journal on Control and Optimization}, 51(5):3624--3651,
  2013.

\bibitem{ZaglmayrDiss2006}
S.~Zaglmayr.
\newblock {\em High Order Finite Elements for Electromagnetic Field
  Computation}.
\newblock PhD thesis, Johannes Kepler University Linz, 2006.

\bibitem{b_ZE_1990a}
E.~Zeidler.
\newblock {\em Nonlinear functional analysis and its applications}.
\newblock Springer, New York Berlin Heidelberg, 1990.

\bibitem{b_ZI_1989a}
W.~P. Ziemer.
\newblock {\em Weakly Differentiable Functions}.
\newblock Springer New York, 1989.

\end{thebibliography}
\bibliographystyle{plain}
\end{document}